\newcommand{\Sym}{\mathrm{Sym}}
\newcommand{\GL}{\mathrm{GL}}
\newcommand{\Lie}{\mathrm{Lie}}
\definecolor{spring}{RGB}{174, 83, 139}
\newtheorem{theorem}{Theorem}[section]
\newtheorem{lemma}{Lemma}
\newtheorem{proposition}{Proposition}
\newtheorem{remark}{Remark}
\newtheorem{definition}{Definition}
\newtheorem{corollary}{Corollary}
\newtheorem{assumption}{Assumption}
\DeclareMathOperator{\Tr}{Tr}
\newcommand{\R}{\mathbb{R}}
\newcommand{\mc}{\mathcal}
\newcommand{\diff}{\mathrm{d}}
\newcommand{\opt}{^\star}
\newcommand{\EE}{\mathbb{E}}
\newcommand{\Stab}{\mathrm{Stab}}
\newcommand{\bs}{\boldsymbol}
\newcommand{\hf}{{1/2}}
\newcommand{\kibitz}[2]{\ifnum\Comments=1{\textcolor{#1}{\textsf{\footnotesize #2}}}\fi}
\title{Optimal Transport on Lie Group Orbits}
\author[Taşkesen]{Bahar Taşkesen}\thanks{University of Chicago, Booth School of Business, \url{bahar.taskesen@chicagobooth.edu}}
\date{}
\begin{document}
\maketitle
\begin{abstract}
In its most general form, the optimal transport problem is an 
infinite-dimensional optimization problem, yet certain notable instances admit closed-form solutions. We identify the common source of this tractability as \textit{symmetry} and formalize it using Lie group theory. Fixing a Lie group action on the outcome space and a reference distribution, we study optimal transport between measures lying on the same Lie group orbit of the reference distribution. In this setting, the Monge problem admits an explicit upper bound given by an optimization problem over the stabilizer subgroup of the reference distribution. The reduced problem's dimension scales with that of the stabilizing subgroup and, in the tractable cases we study, is either zero or finite. 
Under mild regularity conditions, a
feasible point of this reduced problem whose induced transport map satisfies a
$c$-convex first-order certificate makes the upper bound
tight for both the Monge and Kantorovich formulations, with
the optimal map realized by a group element. 
For the quadratic cost on a finite-dimensional Hilbert space
and affine-induced actions, the $c$-convex certificate reduces
to an algebraic condition: the candidate map must have
self-adjoint positive semidefinite linear part. We give a
structural criterion, based on Cartan theory, that guarantees
this condition. When the linear image of the acting group
admits a global Cartan decomposition and its fixed-point
subgroup is contained in the linear image of the stabilizer of the reference law,
the compact component can be absorbed by the stabilizer,
yielding a transport map with a self-adjoint positive definite linear part.
This orbit-based viewpoint unifies known closed-form solutions,
such as elliptical distributions, and yields new closed-form
solutions for Wishart, inverse-Wishart, and matrix beta type~II distributions under
the squared Frobenius cost.
\end{abstract}

\section{Introduction}
Originating in the seminal work of \citet{ref:monge1781memoire} and later given its analytic form by \citet{ref:kantorovich1942transfer}, the optimal transport problem seeks the minimal total cost of transforming one probability measure into another with respect to a prescribed transportation cost function defined on the underlying space.
When both probability measures are discrete and explicitly specified by enumerating their atoms and associated probabilities, the optimal transport problem reduces to a finite-dimensional linear program. Thus, classical polynomial-time algorithms ({e.g.}, interior-point methods \cite{ref:karmarkar1984new}) provide efficient solutions whose complexity scales polynomially with the input size. In sharp contrast, when discrete measures have implicitly defined supports, such as when distributions factorize across multiple dimensions, the number of atoms can grow exponentially with the dimension, resulting in a problem that remains polynomially describable yet is provably $\#$P-hard \cite{ref:tacskesen2023discrete}. Likewise, the optimal transport problem between a generic (possibly continuous) measure and a discrete measure is also known to be $\#$P-hard \cite{ref:tacskesen2023semi}.
When {both} probability measures are \textit{continuous}, the optimal transport problem manifests as an infinite-dimensional linear program. 
Following the previously observed complexity trend, one might naturally
conjecture that optimal transport between two continuous, non-atomic
measures, where mass is spread over a continuum rather than finitely many
atoms, would be equally formidable.
Curiously, however, certain instances of the optimal transport problem between continuous probability measures admit explicit closed-form solutions, rendering these problems uniquely tractable despite their inherently infinite-dimensional structure.

Early breakthroughs showed that infinite-dimensionality need not preclude analytic tracta\-bi\-lity. In one dimension, \cite{ref:dall1956sugli} showed that the optimal transport problem when induced by any power of absolute difference cost is available in closed form in terms of the quantile functions of the marginals.
For multivariate Gaussian distributions, \cite{DowsonLandau1982} and \cite{OlkinPukelsheim1982} derived an explicit solution for the optimal transport problem induced by the quadratic cost function in terms of the means and covariance matrices. \cite{Gelbrich1990} then generalized this to the full class of elliptically contoured distributions, yielding a single closed-form formula for the optimal transport problem and the associated optimal solution.
For a comprehensive survey of closed-form solutions of optimal transport, see \cite[\S 3]{ref:rachev1998mass}. 

Taken together, these instances appear disparate; it is not evident a priori why an inherently infinite-dimensional problem should, in select cases, admit explicit closed-form solutions. 
We show that the unifying reason is symmetry, and we use Lie group theory to identify and exploit such symmetries between distributions.

{\color{black}
Specifically, we fix a Lie group action on the outcome space
and a reference distribution, and study optimal transport
between distributions lying on the same Lie group orbit. This
orbit perspective induces an equivalence among
distributions, allowing familiar families to be recognized as
members of a single orbit. In this setting, the Monge problem
admits an upper bound obtained by optimizing over the
stabilizer subgroup of the reference law. In the examples we
study, this reduced problem is finite-dimensional and sometimes
trivial. Under standard regularity conditions, we prove that
this orbit upper bound is tight for both the Monge and
Kantorovich formulations whenever the candidate map admits a
\(c\)-convex first-order certificate. In that case, the optimal
transport is realized by a suitable group element.}

{\color{black}
For the quadratic cost and affine-induced actions, we give a
structural criterion based on Cartan theory. When the linear image of
the acting group admits a global Cartan decomposition and its fixed-point
subgroup is contained in the linear image of the stabilizer of the
reference law, the compact component can be absorbed by the stabilizer,
yielding an orbit-induced transport map with a self-adjoint positive
definite linear part. Thus, in these cases, the reduced stabilizer
problem need not be solved as an independent optimization problem; the
Cartan factorization itself produces a stabilizing element whose
orbit-induced map satisfies the algebraic optimality certificate.
This Cartan-based mechanism recovers the classical closed-form formulas
for elliptical transport and yields new closed-form quadratic transport
maps and costs for Wishart, inverse-Wishart, and matrix beta type~II
distributions on the positive definite cone under the squared Frobenius
cost.

The results in this paper unfold in layers.
Theorem~\ref{thm:closed-form} provides the general orbit-reduction
statement: it identifies a structured class of admissible maps and gives
an upper bound on the Monge value, together with an optimality
certificate under which this upper bound becomes tight.
Theorem~\ref{thm:quadratic-group-tightness} specializes this certificate
to quadratic costs and actions induced by affine representations. In this
setting, optimality reduces to the algebraic condition that the linear
part of the candidate map be self-adjoint and positive semidefinite.
Finally, Theorem~\ref{thm:cartan-tightness} gives the main
group-theoretic mechanism: when the fixed-point subgroup of the linear
image is contained in the linear image of the stabilizer of the reference
law, the Cartan decomposition allows the compact factor to be absorbed
by a stabilizing element, thereby producing a candidate map that
satisfies the quadratic certificate.
The examples in Section~\ref{sec:examples} verify these hypotheses for
the affine and congruence mechanisms.
}

\paragraph{\textbf{Notation}}
We write $
  \bar{\mathbb R}=\mathbb R\cup\{+\infty\}$
for the upper extended real line.
For $n \in \mathbb N_{>0}$, the ambient $n$-dimensional Euclidean space is denoted by $\R^{n}$, and it is endowed with
its Borel $\sigma$-algebra $\mathcal B(\R^{n})$,
the Lebesgue measure $\mathcal L^{n}$, the Euclidean norm {$\|\cdot\|$} and the standard inner product $\langle \cdot, \cdot\rangle$.
Throughout, subsets of $\mathbb R^{n}$ are equipped with the Borel
$\sigma$-algebra inherited from $\mathbb R^{n}$. 
We write $\mc X $ for a {nonempty open} subset of a finite-dimensional Euclidean space. In the examples {considered in this paper} (with $d\in\mathbb N_{>0}$), $\mc X$ will be $\R^d$, the positive orthant $\R_{>}^d$, or the cone of positive-definite matrices $\mathbb S_{\succ}^d$. 
The identity map on $\mc X$ is denoted by $\mathrm{id}_{\mc X}: \mc X \to \mc X$, $\mathrm{id}_{\mc X}(\bs x)= \bs x$ for every $\bs x \in \mc X$. 
We write
$\bs I_{d}$ for the $d\times d$ identity matrix
and $
\mathrm{GL}(d)
   =\{\bs A\in\R^{d\times d} \mid \det(\bs A)\neq0\}$
for the real general linear group. $\mathcal O(d)$ denotes the orthogonal group in dimension $d$.
We denote the set of real symmetric $d\times d$ matrices by
$\mathrm{Sym}(d)=\{\mathbf H\in\mathbb R^{d\times d}\mid\mathbf H^\top=\mathbf H\}$.
We denote by $\mathcal B(\mathcal X)$ the Borel
$\sigma$-algebra of $\mc X$. 
$\mathcal P(\mathcal X)$  denotes the set of Borel probability
measures on $(\mathcal X,\mathcal B(\mathcal X))$.
{\color{black} For a Borel map
\(T:\mc X\to\mc X\) and \(\mu\in\mc P(\mc X)\), the push-forward
\(T_\#\mu\in\mc P(\mc X)\) is defined by $
  T_\#\mu(\mc A)
  =
  \mu(T^{-1}(\mc A))$,
  $\mc A\in\mathcal B(\mc X)$.
We write \(\mathcal U([0,1])\) for the uniform distribution on
\([0,1]\).
The multivariate gamma function is denoted by $
  \mathsf\Gamma_d(a)
  =
  \pi^{d(d-1)/4}
  \prod_{j=1}^d
  \Gamma(a-\frac{j-1}{2})$,
  $a>\frac{d-1}{2}$.
The multivariate beta function is denoted by $
  \mathsf B_d(a,b)
  =
  \frac{\mathsf\Gamma_d(a)\mathsf\Gamma_d(b)}
       {\mathsf\Gamma_d(a+b)}$,~
  $a,b>\frac{d-1}{2}$.}
For $a,b \in \mathbb Z_{>0}$, $\delta_{ab} = 1$ if $a=b$ and $\delta_{ab} =0 $ otherwise. {\color{black}For \(1\le a,b\le d\), \(\bs E_{ab}\) denotes the \(d\times d\) matrix
with a \(1\) in position \((a,b)\) and zeros elsewhere.} The Hadamard (element-wise) product is denoted by $\odot$. {\color{black} The
symbol \(\oplus\) denotes a direct sum, and \(\rtimes\) denotes a
semidirect product of groups.}

\section{Preliminaries} 
This section introduces mass transportation problems and develops the Lie group background needed for our main results.

\subsection{Mass transportation problems} 
In his memoir, \citet{ref:monge1781memoire} formulated the problem of transporting one distribution of mass into another at minimal cost. Formally, given a Borel-measurable cost function $c:\mathcal X\times\mathcal X\to  \R$ and two measures $\mu_0,\mu_1\in\mathcal P(\mathcal X)$, the \textit{Monge problem} induced by cost function $c$ is defined as 
\begin{equation}\label{eq:monge-problem}
      \mathds M_{c}(\mu_0,\mu_1)
      =
      \inf_{T\in\mathcal T(\mu_0,\mu_1)}
      \int_{\mathcal X} c\left(\bs x,T(\bs x)\right) \diff \mu_0(\bs x),
      \tag{MP}
\end{equation}
where $
      \mathcal T(\mu_0,\mu_1)
      =
      \left\{
           T:\mathcal X\to\mathcal X
           \text{ Borel-measurable }
           \mid
           T_{\#}\mu_0=\mu_1
      \right\}$
is the set of {admissible transport maps}.  
If a map $T\opt\in\mathcal T(\mu_0,\mu_1)$ attains the infimum
in~\eqref{eq:monge-problem}, then it is called an \textit{optimal Monge map} between $\mu_0$ and~$\mu_1$.

The admissible set $\mathcal T(\mu_0,\mu_1)$ can be empty. For instance, if $\mu_0$ has an atom while $\mu_1$ is atomless, no measurable (deterministic) map can push $\mu_0$ to $\mu_1$ because a map cannot split mass. Even when $\mathcal T(\mu_0,\mu_1)$ is nonempty, the constraint $T_{\#}\mu_0=\mu_1$ of \eqref{eq:monge-problem} is {non-convex}. To see this suppose that $\mu_0 = \mu_1 =  \mathcal U([0,1])$. Now, let $T_1( x) =  x$ and $T_2( x) = 1- x$. Then, a simple calculation shows that ${T_1}, T_2 \in \mathcal T(\mu_0, \mu_1)$. Their midpoint $\bar T = 0.5(T_1 + T_2) $ is the constant map sending every $x$ to $\hf$; that is, it pushes $\mu_0$ to a Dirac measure $\delta_{\hf}$, $\bar T_\# \mu_0 = \delta_{\hf} \neq \mu_1$ implying that $\bar T\notin \mathcal T(\mu_0, \mu_1)$. 
Thus the Monge problem~\eqref{eq:monge-problem} is an infinite-dimensional optimization over a generally nonconvex feasible set.

In 1942, \citet{ref:kantorovich1942transfer} introduced a convex relaxation of \eqref{eq:monge-problem} by replacing maps with probability couplings. Given $\mu_0,\mu_1\in\mathcal P(\mathcal X)$, the \emph{Kantorovich problem} induced by cost function $c$ is defined as
\begin{equation}
\label{eq:kantorovich-problem}
   \mathds K_c(\mu_0,\mu_1)=\inf_{\Gamma\in \Pi(\mu_0,\mu_1)}
        \int_{\mc X \times\mc X} c(\bs x,\bs y) \diff \Gamma(\bs x,\bs y),
        \tag{KTP}
\end{equation}
where $\Pi(\mu_0, \mu_1)$ denotes the set of all probability measures on $\mc X\times \mc X$ with first marginal $\mu_0$ and second marginal $\mu_1$. If $\Gamma\opt \in \Pi(\mu_0, \mu_1)$ solves \eqref{eq:kantorovich-problem}, then it is called an \textit{optimal transportation plan} between $\mu_0$ and $\mu_1$.
The optimization in~\eqref{eq:kantorovich-problem} is commonly called the optimal transport problem; to avoid ambiguity we will refer to \eqref{eq:kantorovich-problem} as the \emph{Kantorovich problem} and to \eqref{eq:monge-problem} as the \emph{Monge problem}.

The deterministic nature of the Monge problem is appealing as it yields an explicit transport map that relocates mass without splitting it, in contrast to the probabilistic transportation plans of the Kantorovich formulation in the form of couplings. This brings interpretability and aligns with settings where splitting is not physically meaningful or desirable (e.g., moving a pile of soil or routing indivisible items).
There is a tight connection between the Monge problem
\eqref{eq:monge-problem} and the Kantorovich problem
\eqref{eq:kantorovich-problem}. {\color{black}Under suitable regularity assumptions,
optimal Kantorovich plans concentrate on graphs of transport maps, and
these maps solve the corresponding Monge problem.}  Existence of Monge solutions for the quadratic cost on Euclidean space was first developed in \cite{ref:brenier1987decomposition, ref:ruschendorf1990characterization}; for non-quadratic costs, existence was investigated in \cite{ref:ruschendorf1991frechet, ref:smith1992hoeffding, McCann1995}. One of the most general results linking Monge solutions to gradients of $c$-convex functions is due to \cite{ref:villani2008optimal}. For ease of reference, we now adapt \cite[Theorem~10.28]{ref:villani2008optimal} to our notation and restate it here. Before doing so, we record the regularity hypotheses on cost function $c$ and recall the definition of $c$-convexity required by that theorem.

\begin{assumption}
\label{ass:c-regularity}
The cost function $c:\mc X\times\mc X\to\R$ {is} continuous and bounded below. Additionally for every $\bs y\in\mc X$, the map $\bs x\mapsto c(\bs x,\bs y)$ belongs to $\mc C^1(\mc X)$, and 
{\color{black}for every $\bs x\in\mc X$, the map $
\mc X \ni \bs y \mapsto \nabla_{\bs x} c(\bs x,\bs y)\in\R^n$
}is injective.
\comment{for every $\bs x\in\mc X$ the map $\bs y\mapsto \nabla_{\bs x}c(\bs x,\bs y)$ is injective. {\color{red}: What is the domain of $\bs y \mapsto \nabla_{\bs x} c(\bs x, \bs y)$ here?}}
\end{assumption}
Assumption~\ref{ass:c-regularity} contains the twist condition (see \cite[\S10]{ref:villani2008optimal}): for each fixed $\bs x$, no two distinct targets $\bs y\neq\bs y'$ yield the same $\bs x$-gradient of the cost. Geometrically, different destinations exert different first-order ``forces'' at $\bs x$.

\begin{definition}[$c$-convexity and $c$-subdifferential]
Suppose $c:\mc X\times\mc X\to\bar\R$.
A function $\psi:\mc X\to\bar\R$ is said to be \emph{$c$-convex} if it
is not identically $+\infty$ and there exists a function
$\phi:\mc X\to \R\cup\{\pm\infty\}$ such that $  \psi(\bs x)
  =
  \sup_{\bs y\in\mc X}\{\phi(\bs y)-c(\bs x,\bs y)\}$, $\forall \bs x\in\mc X$.
{\color{black}For a $c$-convex function $\psi:\mc X\to\bar\R$, its
\emph{$c$-subdifferential} at $\bs x\in\mc X$ is $
  \partial^c\psi(\bs x)=\{
  \bs y\in\mc X:
  \psi(\bs z)\ge
  \psi(\bs x)+c(\bs x,\bs y)-c(\bs z,\bs y)~
  \forall \bs z\in\mc X
  \}$.
We say that $\psi$ is $c$-subdifferentiable at $\bs x$ if
$\partial^c\psi(\bs x)\neq\varnothing$, and we call $  \mathrm{dom}(\partial^c\psi)=
  \{\bs x\in\mc X:\partial^c\psi(\bs x)\neq\varnothing\}$
the \emph{domain of $c$-subdifferentiability}.}
\end{definition}
{\color{black}The notion of $c$-convexity is a cost-dependent analogue of ordinary
convexity. In classical convex analysis, convex functions arise as
pointwise suprema of affine functions. Here, the affine family is
replaced by the cost-generated family $
  \bs x\mapsto \phi(\bs y)-c(\bs x,\bs y)$,
  $\bs y\in\mc X$,
which leads to the definition above.
In the same spirit, the notion of the $c$-subdifferential is the
corresponding cost-dependent analogue of the ordinary subdifferential.
In classical convex analysis, a vector $\bs p\in\R^d$ belongs to the
subdifferential of a convex function $\psi$ at $\bs x$ if the affine
function $
  \bs z\mapsto \psi(\bs x)+\langle \bs p,\bs z-\bs x\rangle$
agrees with $\psi$ at $\bs x$ and lies below $\psi$ for all
$\bs z\in\mc X$. Here, affine supporting functions are replaced by the
cost-adjusted family $
  \bs z\mapsto \psi(\bs x)+c(\bs x,\bs y)-c(\bs z,\bs y)$,~
  $\bs y\in\mc X$,
which leads to the definition above.

  \begin{assumption}
\label{ass:c-mu-0}
The cost function $c:\mc X\times\mc X\to\R$ and the measure
$\mu_0\in\mc P(\mc X)$ are such that, for every $c$-convex function
$\psi:\mc X\to\bar\R$, the function $\psi$ is differentiable
$\mu_0$-almost everywhere on $\mathrm{dom}(\partial^c\psi)$.
\end{assumption}
{\color{black}Assumption~\ref{ass:c-mu-0} allows the geometric condition
$\bs y\in\partial^c\psi(\bs x)$ to be converted into a first-order
identity. Indeed, let $\psi$ be $c$-convex, let
$\bs x\in\mathrm{dom}(\partial^c\psi)$ be a point at which $\psi$ is
differentiable, and choose
$\bs y\in\partial^c\psi(\bs x)$. By the definition of the
$c$-subdifferential, $
  \psi(\bs z)
  \ge
  \psi(\bs x)+c(\bs x,\bs y)-c(\bs z,\bs y)$ for all $ \bs z\in\mc X$.
Equivalently, $
  \psi(\bs z)+c(\bs z,\bs y)
  \ge
  \psi(\bs x)+c(\bs x,\bs y)$ for all $ \bs z\in\mc X$.
Hence the function $
  \bs z\mapsto \psi(\bs z)+c(\bs z,\bs y)$
attains a minimum at $\bs z=\bs x$. Since $\mc X$ is open and
$\bs z\mapsto c(\bs z,\bs y)$ is $\mc C^1$, the first-order optimality
condition gives $
  \nabla_{\bs x}\psi(\bs x)
  +
  \nabla_{\bs x}c(\bs x,\bs y)
  =
  0$.
Together with the twist condition in Assumption~\ref{ass:c-regularity}, this
identity identifies $\bs y$ uniquely.}}

{\color{black}\begin{assumption}
\label{ass:H-infty}
The cost \(c:\mc X\times\mc X\to \R\) satisfies condition
\((H_\infty)\) in the sense of \cite[Chapter~10]{ref:villani2008optimal}. 
 More explicitly,
for a set \(\mc S\subseteq\mc X\) and a point \(\bs x\in\overline {\mc S}\), write
\[
  \mc T(\mc S,\bs x)
  =
  \left\{
  \lim_{k\to\infty}\frac{\bs x_k-\bs x}{t_k}
  :
  \bs x_k\in \mc S, \bs x_k\to\bs x, t_k>0,\ t_k\to0
  \right\}
\]
for the tangent cone to \(\mc S\) at \(\bs x\). Then:
\begin{enumerate}[label=\textnormal{(\roman*)}]
  \item For every \(\bs x\in\mc X\) and every measurable set
  \(\mc S\subseteq\mc X\) whose tangent cone \(\mc T(\mc S,\bs x)\) is not contained
  in a half-space, there exist points
  \(\bs z_1,\ldots,\bs z_k\in\mc S\) and a ball \(B\subseteq\mc X\)
  containing \(\bs x\) such that, for all \(\bs y\) outside a compact
  subset of \(\mc X\), $
    \inf_{\bs w\in B} c(\bs w,\bs y)
    \ge
    \inf_{1\le j\le k} c(\bs z_j,\bs y)$.

  \item For every \(\bs x\in\mc X\) and every neighborhood
  \(\mc U\subseteq\mc X\) of \(\bs x\), there exists a ball
  \(B\subseteq\mc X\) containing \(\bs x\) such that
   $
    \lim_{\bs y\to\infty}
    \sup_{\bs w\in B}
    \inf_{\bs z\in \mc U}
    \{
      c(\bs z,\bs y)-c(\bs w,\bs y)\}
    =
    -\infty.$
\end{enumerate}
Here \(\bs y\to\infty\) means that \(\bs y\) eventually leaves every
compact subset of \(\mc X\).
\end{assumption}
Assumption~\ref{ass:H-infty} is a no-escape-at-infinity condition on
the cost. It controls what happens when the second argument
\(\bs y\) leaves every compact subset of \(\mc X\). The first part says
that, near any point \(\bs x\) and along any set \(\mc S\) that has enough
directions around \(\bs x\), the cost of sending nearby points
\(\bs w\in B\) to a far-away target \(\bs y\) is bounded from below by
the cost of sending finitely many comparison points
\(\bs z_1,\ldots,\bs z_k\in \mc S\) to the same target. Thus far-away
targets cannot distinguish the point \(\bs x\) from its surrounding
directions in an uncontrolled way. The second part is a stronger separation condition. It says that, if
\(\bs z\) is allowed to range in any prescribed neighborhood \(\mc U\) of
\(\bs x\), then for targets \(\bs y\) going to infinity the cost
difference $
  c(\bs z,\bs y)-c(\bs w,\bs y)$
can be made uniformly very negative, with \(\bs w\) ranging over a
small ball \(B\) around \(\bs x\). In other words, when the target moves
far away, points near \(\bs x\) cannot all behave as equally good
first-order competitors. The assumption therefore rules out pathological
behavior caused by target points escaping to infinity.}

{\color{black}For ease of reference, we now recall the relevant consequences of
\cite[Theorem~10.28]{ref:villani2008optimal} in the notation of this paper.}

\begin{theorem}
{\color{black}Let \(\mu_0,\mu_1\in\mc P(\mc X)\). Suppose that
\(c:\mc X\times\mc X\to\R\) satisfies
Assumption~\ref{ass:c-regularity}, that the pair
\((c,\mu_0)\) satisfies Assumption~\ref{ass:c-mu-0}, and that $\mathds K_c(\mu_0, \mu_1) < \infty$.} Then:
\begin{enumerate}[label=\textnormal{(\roman*)}]
 \item\eqref{eq:kantorovich-problem} admits a unique (in law) optimal transportation plan $\Gamma\opt \in \Pi(\mu_0, \mu_1)$.
 \item There exists a unique {\color{black}optimal} Monge map $T\opt : \mc X \to \mc X$ {\color{black}, up to $\mu_0$-almost everywhere equality} solving \eqref{eq:monge-problem}. 
 \item 
 {\color{black}There exists a \(c\)-convex function \(\varphi:\mc X\to\bar\R\)
  such that }
  \begin{equation}
  \tag{\ensuremath\heartsuit}
    \nabla_{\bs x}\varphi(\bs x)
    +
    \nabla_{\bs x}c(\bs x,T\opt(\bs x))
    =
    0 \quad \mu_0\text{-almost surely}.
  \label{eq:T-optimal-c-convex-equation}
  \end{equation}
 \item  $\Gamma\opt$ is concentrated on the graph $T{\opt}$, that is, $\Gamma\opt = (\mathrm{id}_{\mc X}, T\opt)_{\#}\mu_0$.
 \item {\color{black} If, in addition, \(c\) satisfies Assumption~\ref{ass:H-infty}, then
\eqref{eq:T-optimal-c-convex-equation} characterizes the optimal
coupling in the following sense: if \(\Gamma\in\Pi(\mu_0,\mu_1)\) is
concentrated on pairs \((\bs x,\bs y)\) satisfying $
  \nabla_{\bs x}\varphi(\bs x)
  +
  \nabla_{\bs x}c(\bs x,\bs y)
  =
  0$
for some \(c\)-convex function \(\varphi\), then \(\Gamma\) is the
unique optimal Kantorovich plan. In particular, if a feasible map
\(T:\mc X\to\mc X\) satisfies \eqref{eq:T-optimal-c-convex-equation}
for some \(c\)-convex function \(\varphi\), then \(T\) is the unique
optimal Monge map up to \(\mu_0\)-almost everywhere equality.}
\end{enumerate}
\label{thm:monge-kantorovich}
\end{theorem}

Under the Assumptions~1,~2 and 3, \Cref{thm:monge-kantorovich}(v) delineates conditions under which the Monge map is characterized via the gradient condition \eqref{eq:T-optimal-c-convex-equation}.
\subsection{Lie group theory} 
Originating with \cite{ref:lie1891vorlesungen}, Lie groups formalize {continuous} symmetry, that is, they are groups that are simultaneously smooth manifolds, with multiplication and inversion smooth.
A canonical example is the circle (planar rotations), where composition and inversion vary smoothly with the angle.
In what follows, we will review the minimal Lie-theoretic background used throughout; for comprehensive treatments see \cite{ref:boumal2023introduction, ref:duistermaat2012lie,ref:hall2013lie}.

\begin{definition}[Lie group]
    Let $ G$ be both a {smooth manifold} and a {group}. If the product map ${\rm{prod}} :  G\times  G \to  G : (g,h) \mapsto {\rm{prod}}(g,h) = g h$
    and the inverse map ${\rm{inv}} :  G \to  G: g \mapsto \rm{inv}(g) =g^{-1}$
    are smooth, then $G$ is a {Lie group}. Smoothness of $\rm{prod}$ is understood with respect to the product manifold structure on $ G \times  G$. 
    \label{def:lie-group}
\end{definition}
We now illustrate \Cref{def:lie-group} with a standard example: the general linear group $\mathrm{GL}(d)$.
Note that determinant is a polynomial map, so 
$\mathrm{GL}(d)=\det^{-1}(\R\setminus\{0\})$ is an open subset of 
$\R^{d^{2}}$. Hence, $\mathrm{GL}(d)$ is a smooth manifold of dimension $d^2$. 
The group operation is matrix multiplication and inversion. The multiplication map $
\mathrm{prod}:\mathrm{GL}(d)\times \mathrm{GL}(d)\to \mathrm{GL}(d),$ with $ (\bs A,\bs B)\mapsto \bs A\bs B$
is smooth because each entry of $\bs A\bs B$ is a polynomial in the entries of $\bs A$ and $\bs B$.
The inverse map $
\mathrm{inv}:\mathrm{GL}(d)\to \mathrm{GL}(d),$ with $ \bs A\mapsto \bs A^{-1}$
is smooth on $\mathrm{GL}(d)$ since $\bs A^{-1}=\mathrm{adj}(\bs A)/\det(\bs A)$, where $\mathrm{adj}(\bs A)$
is polynomial in the entries
of $\bs A$ and $\det(\bs A)\neq 0$ on $\mathrm{GL}(d)$.
Thus both $\mathrm{prod}$ and $\mathrm{inv}$ are smooth, so $\mathrm{GL}(d)$ is a Lie group.
Many other familiar matrix groups are Lie groups including $
\mathcal O(d) $ and $
\mathrm{SL}(d)=\{\bs A\in\R^{d\times d}\mid \det(\bs A)=1\}$; see \cite[Chapter 1]{ref:hall2013lie}.

\begin{definition}[Left action and orbit]
\label{def:group-action+orbit}
Let $G$ be a Lie group and let $\mathcal M$ be a set. A \emph{left action} of $G$ on $\mc M$ is a map $\phi :  G \times \mathcal M\to \mathcal M$ such that:
\begin{enumerate}[label=\textnormal{(\roman*)}]
    \item for all $\rho \in \mathcal M$, $\phi(e, \rho) = \rho$,
    \item for all $g,h \in  G$ and $\rho \in \mathcal M$, $\phi(g h, \rho) = \phi(g, \phi(h, \rho))$,
\end{enumerate}
where $e$ is the identity element of $G$. The \emph{orbit} of $\rho \in \mathcal M$ under the action $\phi$ of $G$ is the set $ G_\rho =  \{\phi(g, \rho)\mid g \in  G\}$.
\end{definition}
Orbits induce an equivalence relation on $\mathcal M$: two points $\rho_1,\rho_2\in\mathcal M$ are equivalent whenever one can be reached from the other by applying an element of $G$, that is, whenever there exists $g\in G$ such that $
\rho_2=\phi(g,\rho_1)$.
Thus, the equivalence classes are the orbits of the action.

\section{Distributions in Lie Group Orbits}
We denote the infinite-dimensional group of all smooth diffeomorphisms of $\mc X$ by $\mathrm{Diff}(\mc X)$, defined as $
      { \rm{Diff}}(\mc X)=\{ \Phi:\mc X\to\mc X\mid 
             \Phi\text{ is a }\mathcal C^{\infty}$ bijection and 
             $\Phi^{-1}\text{ is }\mathcal C^{\infty}\}.$
{The group operation on $\mathrm{Diff}(\mc X)$ is composition, the identity element is $\mathrm{id}_{\mc X}$, and inversion is the usual map inverse.}

{In the remainder of the paper, let \(G\) be a Lie group acting smoothly
on \(\mc X\). Thus we are given a smooth map $
  G\times\mc X\to\mc X$,~$(g,\bs x)\mapsto \alpha(g)(\bs x)$,
such that \(\alpha(e)=\mathrm{id}_{\mc X}\) and
\(\alpha(gh)=\alpha(g)\circ\alpha(h)\) for all \(g,h\in G\). Equivalently,
the action determines a group homomorphism $
  \alpha:G\to\mathrm{Diff}(\mc X)$,~$g\mapsto \alpha(g)$,
whose associated evaluation map \((g,\bs x)\mapsto\alpha(g)(\bs x)\) is
smooth. We refer to the image \(\alpha(G)\subseteq\mathrm{Diff}(\mc X)\)
as the associated acting transformation group. 
For example, let $G=\operatorname{GL}(d)$ and $\mc X=\mathbb{R}^d$, and define $\alpha: G \to \mathrm{Diff}(\mathbb{R}^d)$ by $\alpha(\bs A)(x)=\bs A x$. Then $\alpha$ is a group homomorphism, and $\alpha(G)$ is the group of linear diffeomorphisms of $\mathbb{R}^d$.}

The goal of this paper is to identify when and explain why the optimal transport problem admits closed-form solutions.
We show that many tractable instances occur when the source and target measures lie in the same orbit of a group acting on the outcome space. Accordingly, we begin by describing the push-forward action of subgroups of diffeomorphisms on probability measures, which lets us speak of orbits exactly as in classical group actions, but now at the level of distributions.

{The action $\alpha$ induces a left action of $G$ on $\mathcal P(\mc X)$ by pushforward: $
\phi: G \times \mathcal P(\mc X) \to \mathcal P(\mc X)$,
$\phi(g,\mu) = \alpha(g)_{\#}\mu$.
Indeed, since $\alpha(e)=\mathrm{id}_{\mc X}$ and $\alpha(gh)=\alpha(g)\circ\alpha(h)$, it follows that $
\alpha(e)_{\#}\mu = \mu$, and 
$\alpha(gh)_{\#}\mu = \alpha(g)_{\#}(\alpha(h)_{\#}\mu)$
for all $g,h\in G$ and $\mu\in\mathcal P(\mc X)$.}

With this action in hand, we introduce the basic objects it generates.
\begin{definition}[Orbit of a measure]\label{def:orbit-distribution}For $\rho \in \mathcal P(\mc X)$, we define its $G$-orbit as $
G_{\#}\rho = \{\alpha(g)_{\#}\rho \mid g \in G\} \subset \mathcal P(\mc X)$.
\end{definition}
Thus two measures are {equivalent} if one is the push-forward of the other by some $g\in G$.

\begin{lemma}[Orbit measures remain absolutely continuous]\label{lem:ac}
If $\rho\ll\mathcal L^{n}$, then for every $g\in  G$ we have $\alpha(g)_{\#}\rho\ll\mathcal L^{n}$ and, for $\mathcal L^{n}$-almost every $\bs y\in\mc X$, 
\[
\frac{\mathrm d(\alpha(g)_{\#}\rho)}{\mathrm d\mathcal L^{n}}(\bs y)
=
\frac{\mathrm d\rho}{\mathrm d\mathcal L^{n}}
\bigl(\alpha(g)^{-1}(\bs y)\bigr)
\left|\det\bigl(D(\alpha(g)^{-1})(\bs y)\bigr)\right|,
\]
where $D(\alpha(g)^{-1})(\bs y)$ denotes the Jacobian matrix of $\alpha(g)^{-1}$ evaluated at $\bs y$.
\end{lemma}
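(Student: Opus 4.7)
The plan is to derive the density formula directly via the classical Euclidean change-of-variables formula, obtaining absolute continuity as an automatic corollary. Since $g \in \mc G \subset \mathrm{Diff}(\mc X)$, both $g$ and $g^{-1}$ are $\mathcal C^{\infty}$ bijections with everywhere nonsingular Jacobian (by the inverse function theorem applied to $g \circ g^{-1} = \mathrm{id}_{\mc X}$), so the standard change of variables applies on $\mc X$, viewed as an open subset of the ambient Euclidean space (matching the running examples $\R^d$, $\R_+^d$, $\mathbb S_{++}^d$).

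First, I would unwind the definition of the push-forward and use $\rho \ll \mathcal L^n$: writing $f = \mathrm d\rho / \mathrm d \mathcal L^n$ for the Radon--Nikodym density of $\rho$, for any Borel set $\mathcal A \in \mathcal B(\mc X)$,
\[
(g_{\#}\rho)(\mathcal A) \;=\; \rho\bigl(g^{-1}(\mathcal A)\bigr) \;=\; \int_{g^{-1}(\mathcal A)} f(\bs x)\, \mathrm d\mathcal L^n(\bs x).
\]
Next, I would perform the change of variables $\bs x = g^{-1}(\bs y)$. Because $g^{-1}: \mc X \to \mc X$ is a $\mathcal C^{\infty}$ diffeomorphism with $|\det Dg^{-1}(\bs y)| > 0$ everywhere, the classical change of variables formula yields
\[
\int_{g^{-1}(\mathcal A)} f(\bs x)\, \mathrm d\mathcal L^n(\bs x) \;=\; \int_{\mathcal A} f\bigl(g^{-1}(\bs y)\bigr)\,\bigl|\det Dg^{-1}(\bs y)\bigr|\, \mathrm d\mathcal L^n(\bs y).
\]
Combining the two identities gives $(g_{\#}\rho)(\mathcal A) = \int_{\mathcal A} f(g^{-1}(\bs y))\,|\det Dg^{-1}(\bs y)|\, \mathrm d\mathcal L^n(\bs y)$ for every $\mathcal A \in \mathcal B(\mc X)$. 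Consequently $g_{\#}\rho \ll \mathcal L^n$, and by (essential) uniqueness of the Radon--Nikodym derivative the density must equal $\bs y \mapsto f(g^{-1}(\bs y))\,|\det Dg^{-1}(\bs y)|$ for $\mathcal L^n$-a.e.\ $\bs y$, which is the asserted formula.

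This argument is almost entirely mechanical, so I do not expect a real obstacle; the only point meriting care is invoking the Euclidean change-of-variables formula on $\mc X$, which is unproblematic because $\mc X$ is assumed to sit as an open subset of a finite-dimensional Euclidean space and $g$ is smooth on all of $\mc X$, so no boundary contributions or manifold charts are needed.
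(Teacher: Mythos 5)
Your proposal is correct and matches the paper's argument: the paper's proof simply invokes the change-of-variables formula for the diffeomorphism $g$, which is exactly the computation you spell out in detail (push-forward definition, substitution $\bs x = g^{-1}(\bs y)$, and Radon--Nikodym uniqueness).
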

\begin{proof}
Since $\alpha(g)\in \mathrm{Diff}(\mc X)$, the map $\alpha(g)$ is a $\mathcal C^{1}$-diffeomorphism of $\mc X$. The conclusion therefore follows from the change-of-variables formula.
\end{proof}

\begin{definition}[Stabilizer subgroup]\label{def:stabilizer}
For a reference measure $\rho\in \mathcal P(\mc X)$, the \emph{stabilizer} of $\rho$ in $G$ is 
$
   \mathrm{Stab}_{G}(\rho)= \{
         h\in G\mid \alpha(h)_{\#}\rho=\rho\}.$
\end{definition}
Thus $\mathrm{Stab}_{G}(\rho)$ consists precisely of those elements of $G$ whose induced push-forward action leaves the reference measure $\rho$ invariant. $\mathrm{Stab}_G(\rho)$ is nonempty because $\alpha(e)=\mathrm{id}_{\mc X}$, and hence $\alpha(e)_{\#}\rho=\rho$.

With the action, orbits, and stabilizer in place, the subsequent section studies the optimal transport problem between measures lying on a common orbit.

\section{Transport of Distributions within Lie Group Orbits}
For the remainder of this paper, we fix a reference probability measure $\rho \in \mathcal{P}(\mc X) $. 
We study the Monge transportation problem \eqref{eq:monge-problem} induced by a Borel-measurable cost function $c:\mathcal{X}\times\mathcal{X}\to \R$, between two measures in the orbit of $\rho$, that is, $\mu_0, \mu_1 \in G_{\#}(\rho)$. 
Equivalently, there exists $g_0, g_1 \in G$, such that $\mu_0 = \alpha(g_0)_{\#}\rho$ and $\mu_1 = \alpha(g_1)_{ \#}\rho$. {\color{black}We fix one such pair \((g_0,g_1)\) throughout this section.} 
We assume $\rho\ll \mc L^n$, then by \Cref{lem:ac}, $\mu_0$ and $\mu_1$ are absolutely continuous as well, and we write $r_i=\mathrm d\mu_i/\mathrm d\mc L^n$ for their densities.

A crucial observation underpinning our analysis is that, while the Monge problem
is typically an infinite-dimensional and notoriously challenging optimization problem, a subtle algebraic structure emerges when the measures reside in a common $G$-orbit.
First, although $\mc T(\mu_0, \mu_1)$ may be empty for arbitrary distributions, when measures reside on a common orbit, the map
$T=\alpha(g_1)\circ \alpha(g_0)^{-1}\in \alpha(G)$ satisfies $T_{\#}\mu_0=\mu_1$, so $\mc T(\mu_0,\mu_1)$ is not empty.
Moreover, every admissible transport admits a canonical factorization through the reference law: it can be pulled back to the $\rho$-coordinates. The next lemma formalizes this observation and serves as our bridge from arbitrary transportation maps to $\rho$-preserving transformations.
\begin{lemma}\label{lem:factorisation}
If a measurable map $T:\mathcal X \to \mathcal X$ satisfies
$T_{\#}\mu_0=\mu_1$, then the composite $
   H = \alpha(g_1)^{-1}\circ T\circ \alpha(g_0)$ is such that $ H_{\#}\rho=\rho $ and $
   T=\alpha(g_1)\circ H\circ \alpha(g_0)^{-1}.$
\end{lemma}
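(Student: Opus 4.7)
The lemma is essentially a direct computation exploiting the fact, established in \Cref{lem:left-action-on-measures}, that push-forward is compatible with composition. My plan is to verify the two assertions separately: first, the algebraic identity $T = g_1 \circ H \circ g_0^{-1}$, and second, the invariance $H_\# \rho = \rho$.

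The algebraic identity is immediate from the group structure on $\mathrm{Diff}(\mc X)$. Since $g_0, g_1 \in \mc G$ are diffeomorphisms, one has $g_0 \circ g_0^{-1} = \mathrm{id}_{\mc X}$ and $g_1 \circ g_1^{-1} = \mathrm{id}_{\mc X}$, so substituting $H = g_1^{-1}\circ T \circ g_0$ into $g_1 \circ H \circ g_0^{-1}$ collapses to $T$ by associativity of composition.

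For the invariance $H_\# \rho = \rho$, I would apply \Cref{lem:left-action-on-measures}(ii) (compatibility of push-forward with composition) to the composite $H = g_1^{-1} \circ T \circ g_0$. Writing
\[
H_\# \rho = (g_1^{-1} \circ T \circ g_0)_\# \rho = (g_1^{-1})_\# \bigl( T_\# (g_{0\#}\rho) \bigr) = (g_1^{-1})_\# \bigl( T_\# \mu_0 \bigr),
\]
the hypothesis $T_\# \mu_0 = \mu_1$ gives $H_\# \rho = (g_1^{-1})_\# \mu_1$. Substituting $\mu_1 = g_{1\#} \rho$ and applying compatibility once more together with part~(i) of \Cref{lem:left-action-on-measures} yields $H_\# \rho = (g_1^{-1} \circ g_1)_\# \rho = e_\# \rho = \rho$.

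There is no real obstacle here; the only subtle point is a bookkeeping one, namely ensuring that $T$, which is merely Borel measurable rather than a diffeomorphism, can still be composed with elements of $\mc G$ and that the push-forward under the composite is well-defined. This is standard since $g_0, g_1^{-1}$ are Borel measurable (in fact smooth), so $H$ is Borel measurable and the repeated application of \Cref{lem:left-action-on-measures}(ii) is legitimate.
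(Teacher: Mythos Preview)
Your proposal is correct and follows essentially the same approach as the paper: both verify the algebraic identity $T=g_1\circ H\circ g_0^{-1}$ by direct substitution and establish $H_\#\rho=\rho$ via compatibility of push-forward with composition. The only cosmetic difference is that the paper unwinds the computation at the level of Borel sets (writing $H_\#\rho(\mathcal A)=\rho(g_0^{-1}(T^{-1}(g_1(\mathcal A))))$ and peeling off layers), whereas you invoke the compatibility rule $(f\circ g)_\#\mu=f_\#(g_\#\mu)$ at the level of measures; your remark that this rule holds for arbitrary measurable maps, not just elements of $\mathcal G$, correctly handles the one point where \Cref{lem:left-action-on-measures} as stated does not literally apply.
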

\begin{proof}
Because $\alpha(g_0)$ and $\alpha(g_1)$ are diffeomorphisms of $\mc X$, their inverses are Borel measurable. Hence the map $
H=\alpha(g_1)^{-1}\circ T\circ \alpha(g_0)$
is measurable. For any $\mathcal A\in \mathcal B(\mc X)$, we have
\begin{align*}
H_{\#}\rho(\mathcal A)&= \rho\bigl(H^{-1}(\mathcal A)\bigr)= \rho\bigl((\alpha(g_0)^{-1}\circ T^{-1}\circ \alpha(g_1))(\mathcal A)\bigr) = \alpha(g_0)_{\#}\rho\bigl((T^{-1}\circ \alpha(g_1))(\mathcal A)\bigr) \\
&= \mu_0\bigl((T^{-1}\circ \alpha(g_1))(\mathcal A)\bigr) =T_{\#}\mu_0\bigl(\alpha(g_1)(\mathcal A)\bigr)= \mu_1\bigl(\alpha(g_1)(\mathcal A)\bigr) \\
&= \alpha(g_1)_{\#}\rho\bigl(\alpha(g_1)(\mathcal A)\bigr)= \rho(\mathcal A).
\end{align*}
Thus $H_{\#}\rho=\rho$, which proves the first assertion.
Finally, by the definition of $H$, $
\alpha(g_1)\circ H\circ \alpha(g_0)^{-1}
=
\alpha(g_1)\circ \bigl(\alpha(g_1)^{-1}\circ T\circ \alpha(g_0)\bigr)\circ \alpha(g_0)^{-1}
=
T$.
This proves the second assertion.
\end{proof}
 Lemma~\ref{lem:factorisation} shows that any admissible map
\(T\in\mathcal T(\mu_0,\mu_1)\) can be pulled back to a
\(\rho\)-preserving transformation \(H\) on the reference space, and
then recovered by {\color{black}composing} with the orbit representatives
\(\alpha(g_0)\) and \(\alpha(g_1)\).
 Thus, the Monge problem can be viewed as a search over $\rho$-preserving maps on the reference space. If, in addition, $H$ belongs to the acting transformation group $\alpha(G)$, say $H=\alpha(h)$ for some $h\in G$, then $h\in \mathrm{Stab}_{G}(\rho)$. To systematically exploit this structure, we introduce the \emph{Lie group orbit transport problem} between $\alpha(g_0)_{\#}\rho$ and $\alpha(g_1)_{\#}\rho$ induced by the cost function $c$ as follows:
\begin{equation}
    \mathds J_c(g_0, g_1) = \inf\limits_{h\in \mathrm{Stab}_{G}(\rho)}\int_{\mathcal X} c(\alpha(g_0)(\bs x), \alpha(g_1h)(\bs x)) \diff \rho(\bs x).
    \tag{LGOP}
    \label{eq:transport-wihtin-orbits}
\end{equation}
{\color{black}\begin{lemma}
\label{lem:Jc-well-defined}
Let \(s_0,s_1\in\operatorname{Stab}_G(\rho)\). Then $
  \mathds J_c(g_0s_0,g_1s_1)=\mathds J_c(g_0,g_1)$.
\end{lemma}
\begin{proof}
By definition, $
\mathds J_c(g_0s_0,g_1s_1)
=
\inf_{h\in\operatorname{Stab}_G(\rho)}
\int_{\mathcal X}
c(\alpha(g_0s_0)(\bs x),\alpha(g_1s_1h)(\bs x))\diff \rho(\bs x)$.
 
 As \(s_0\in\operatorname{Stab}_G(\rho)\), the change of variables
\(\bs z=\alpha(s_0)(\bs x)\) preserves \(\rho\). Therefore $\mathds J_c(g_0s_0,$ $ g_1s_1)$
equals
\[
\inf_{h\in\operatorname{Stab}_G(\rho)}
\int_{\mathcal X}
c\bigl(\alpha(g_0)(\bs z),\alpha(g_1s_1hs_0^{-1})(\bs z)\bigr)\diff\rho(\bs z).
\]
Note that \(\operatorname{Stab}_G(\rho)\) is a subgroup of $G$. Indeed, \(e\in\Stab_G(\rho)\). If
\(s,t\in\Stab_G(\rho)\), then $
  \alpha(st)_\#\rho
  =
  \alpha(s)_\#\alpha(t)_\#\rho
  =
  \rho$,
and if \(s\in\Stab_G(\rho)\), then $
  \alpha(s^{-1})_\#\rho
  =
  \alpha(s^{-1})_\#\alpha(s)_\#\rho
  =
  \rho$.
Consequently, the map $
  h\mapsto s_1hs_0^{-1}$
is a bijection of \(\operatorname{Stab}_G(\rho)\) onto itself. Hence
\[
\mathds J_c(g_0s_0,g_1s_1)
=
\inf_{\widetilde h\in\operatorname{Stab}_G(\rho)}
\int_{\mathcal X}
c\bigl(\alpha(g_0)(\bs z),\alpha(g_1\widetilde h)(\bs z)\bigr)\diff \rho(\bs z)
=
\mathds J_c(g_0,g_1).
\]
\end{proof}

By \Cref{lem:Jc-well-defined}, one may equivalently regard
\(\mathds J_c\) as a function of the quotient representatives
\(g_0\operatorname{Stab}_G(\rho)\) and
\(g_1\operatorname{Stab}_G(\rho)\), or equivalently as a function of the
orbit measures \(\mu_0,\mu_1\).
}

{Moreover, because every $h \in \Stab_{G}(\rho)$ produces an admissible transport map $T_h = \alpha(g_1 h g_0^{-1})$, \eqref{eq:transport-wihtin-orbits} provides a structured upper bound on \eqref{eq:monge-problem} indexed by the stabilizer of the reference law.
When $\Stab_G(\rho)$ is finite-dimensional, this reduces the search to
a finite-dimensional problem; when the stabilizer is trivial, it reduces
to evaluating a single candidate map.}

{
The key issue is when this upper bound is tight. Under the hypotheses of
\Cref{thm:monge-kantorovich}, optimality of a candidate map is certified
by the existence of a $c$-convex potential satisfying
\eqref{eq:T-optimal-c-convex-equation}. In general, finding such a
potential is an analytic problem and, for smooth costs, leads to the
partial differential equations of optimal transport
\cite[\S~12]{Villani2003}, for which explicit solutions are rarely
available.}

{The point of the orbit framework is that, for maps of the special form $
  T_h=\alpha(g_1hg_0^{-1})$,
this analytic certificate can sometimes be verified algebraically. When
this happens, $T_h$ is optimal for the Monge problem, the orbit upper
bound is tight, and the same element $h$ automatically solves
\eqref{eq:transport-wihtin-orbits}. The following theorem formalizes this
certificate.}
\begin{theorem}
\label{thm:closed-form}
The following statements hold.
\begin{enumerate}[label=\textnormal{(\roman*)}]
    \item $\mathds M_{c}(\mu_0, \mu_1) \leq \mathds J_c(g_0, g_1)$.
    \item {\color{black}Suppose that $\mathds K_c(\mu_0, \mu_1)<\infty$, that \(c\)
    satisfies Assumptions~\ref{ass:c-regularity}
    and~\ref{ass:H-infty}, and that the pair \((c,\mu_0)\) satisfies
    Assumption~\ref{ass:c-mu-0}.} Suppose further that
there exist { $h\in \mathrm{Stab}_G(\rho)$} and a $c$-convex function $\varphi$ such that the map $T_h = \alpha(g_1 h g_0^{-1})$ satisfies \eqref{eq:T-optimal-c-convex-equation} $\mu_0$-almost surely. Then:
    \begin{enumerate}[label=\textnormal{(\alph*)}]
         \item $T_h$ solves \eqref{eq:monge-problem}, and it is the unique optimal Monge map up to $\mu_0$-almost
everywhere equality,
        \item $h$ solves \eqref{eq:transport-wihtin-orbits},
        \item $(\mathrm{id}_{\mc X},T_h)_\#\mu_0$ solves
        \eqref{eq:kantorovich-problem} and it is the unique optimal Kantorovich plan,
        \item $
          \mathds K_c(\mu_0,\mu_1)
          =
          \mathds M_c(\mu_0,\mu_1)
          =
          \mathds J_c(g_0,g_1)$.
    \end{enumerate}
\end{enumerate}
\end{theorem}

\begin{proof} We first show that \eqref{eq:monge-problem} provides a lower bound on \eqref{eq:transport-wihtin-orbits}, and this proves the first claim in the theorem statement. 
To this end, choose any $h$ feasible in \eqref{eq:transport-wihtin-orbits}, and define $T_h = \alpha( g_1  hg_0^{-1})$. {\color{black}Since \(h\in\mathrm{Stab}_G(\rho)\), we have
\[
\begin{aligned}
(T_h)_\#\mu_0 = 
\alpha(g_1hg_0^{-1})_\#\alpha(g_0)_\#\rho =
\alpha(g_1h)_\#\rho =
\alpha(g_1)_\#\alpha(h)_\#\rho =
\alpha(g_1)_\#\rho
=
\mu_1.
\end{aligned}
\]
Thus \(T_h\in\mathcal T(\mu_0,\mu_1)\).}
We next verify that the objective value attained by $h$ in \eqref{eq:transport-wihtin-orbits} coincides with the value attained by $T_h$ in \eqref{eq:monge-problem}:
\begin{align*}
\int_{\mc X} c\bigl(\alpha(g_0)(\bs x),\alpha(g_1h)(\bs x)\bigr)\diff\rho(\bs x)
&=
\int_{\mc X} c\bigl(\alpha(g_0)(\bs x),(\alpha(g_1)\circ \alpha(h))(\bs x)\bigr)\diff\rho(\bs x) \\
&=
\int_{\mc X} c\bigl(\alpha(g_0)(\bs x),(T_h\circ \alpha(g_0))(\bs x)\bigr)\diff\rho(\bs x) \\
&=
\int_{\mc X} c\bigl(\bs x,T_h(\bs x)\bigr)\diff\mu_0(\bs x).
\end{align*}
Taking the infimum over $h$ in both sides of the equality above implies that \eqref{eq:monge-problem} provides a lower bound on \eqref{eq:transport-wihtin-orbits}.
This observation proves assertion~(i).

In the remainder of the proof, assume that $c$ and $\mu_0$ satisfies Assumptions~\ref{ass:c-regularity}{\color{black}-\ref{ass:H-infty}, and that there exist
$h\in\Stab_G(\rho)$ and a $c$-convex function $\varphi$ such that
$T_h$ satisfies
\eqref{eq:T-optimal-c-convex-equation} $\mu_0$-almost surely.
{\color{black}Set $
  \Gamma_h=(\mathrm{id}_{\mc X},T_h)_\#\mu_0$.
Since \(T_h\in\mathcal T(\mu_0,\mu_1)\), we have
\(\Gamma_h\in\Pi(\mu_0,\mu_1)\). Moreover, the assumed identity in \eqref{eq:T-optimal-c-convex-equation}
is equivalent to
\[
  \nabla_{\bs x}\varphi(\bs x)
  +
  \nabla_{\bs x}c(\bs x,\bs y)
  =
  0
  \qquad
  \Gamma_h\text{-almost surely}.
\]
By
\Cref{thm:monge-kantorovich}(v), the coupling \(\Gamma_h\) is the unique
optimal Kantorovich plan. This proves assertion (ii)(c). Since \(\Gamma_h\) is induced by the
deterministic map \(T_h\), the map \(T_h\) solves the Monge problem.
The uniqueness of \(T_h\) up to \(\mu_0\)-almost everywhere equality
follows from \Cref{thm:monge-kantorovich}. This proves (ii)(a).}
 Hence $
  \mathds M_c(\mu_0,\mu_1)
  =
  \int_{\mc X}
  c(\bs x,T_h(\bs x))\diff\mu_0(\bs x)$.
As $T_h = \alpha(g_1 h g_0^{-1})$, we have 
\[
  \mathds M_c(\mu_0,\mu_1)
  =
  \int_{\mc X}
  c\bigl(\alpha(g_0)(\bs z),\alpha(g_1h)(\bs z)\bigr)
  \diff\rho(\bs z).\]
Since $h\in\Stab_G(\rho)$, this $h$ is feasible for
\eqref{eq:transport-wihtin-orbits}. Therefore
\[
  \mathds J_c(g_0,g_1)
  \le
  \int_{\mc X}
  c\bigl(\alpha(g_0)(\bs z),\alpha(g_1h)(\bs z)\bigr)
  \diff\rho(\bs z)
  =
  \mathds M_c(\mu_0,\mu_1).
\]
Together with assertion~\textnormal{(i)}, the inequality above implies $
  \mathds M_c(\mu_0,\mu_1)=\mathds J_c(g_0,g_1)$, and that the feasible element $h$
attains the infimum in \eqref{eq:transport-wihtin-orbits}, proving (ii)(b).

Since $(\mathrm{id}_{\mc X},T_h)_\#\mu_0$ has the same cost as the
optimal Monge map, we obtain $
  \mathds K_c(\mu_0,\mu_1)
  =
  \mathds M_c(\mu_0,\mu_1)
  =
  \mathds J_c(g_0,g_1)$. This proves (ii)(d).}
\end{proof}

{\color{black}
\subsection{Quadratic cost}
\label{sec:quadratic-cost}
We now specialize to the quadratic cost. In this setting, the general
$c$-convex certificate becomes a convex-gradient certificate. Moreover,
when the group action is induced by an affine representation on the
ambient Hilbert space, the orbit-induced transport maps are affine. The
purpose of this subsection is to combine these two observations. First,
we record the convex-analytic identities that underlie optimality for
the quadratic cost. Then, under the affine-representation assumption, we
show that self-adjointness and positive semidefiniteness of the linear
part of an orbit-induced affine map produce an explicit quadratic
$c$-convex potential. This gives a directly verifiable certificate that
the map is optimal, and therefore that the Monge, Kantorovich, and orbit
transport values coincide.

Let $\mathcal H$ be a finite-dimensional real Hilbert space with inner product $\langle \cdot,\cdot \rangle_{\mathcal H}$ and induced norm $\|\cdot\|_{\mathcal H}$, and let $\mathcal X \subset \mathcal H$ be a nonempty open convex set. Unless stated otherwise, the cost function is the quadratic cost $
c(\bs x,\bs y)=\|\bs x-\bs y\|_{\mathcal H}^{2}$, and the reference measure $\rho \in \mc P(\mc X)$ satisfies $\rho \ll \mc L_{\mc H}$ and $\int_{\mc X} \|\bs x\|_{\mc H}^2 \diff \rho(\bs x) < \infty $, where $\mc L_{\mc H}$ is the Lebesgue measure on $\mc H$ induced by the Hilbert structure.
For a finite-dimensional real Hilbert space $\mc H$, we write $
  \GL(\mc H)
  =
  \{A:\mc H\to\mc H\mid A \text{ is linear and invertible}\}$
for its general linear group.
The following lemma records the basic convex-analytic identities for the quadratic cost that will be used repeatedly in the remainder of the paper.

{\color{black}\begin{lemma}[Quadratic $c$-convexity and $c$-subdifferential]
\label{lem:quadratic-c-convex}
For a function $\psi:\mc X\to\bar\R$, define $
  \bar\psi(\bs x)=\psi(\bs x)+\|\bs x\|_{\mc H}^2$.
Then the following hold:
\begin{enumerate}[label=\textnormal{(\roman*)}]
  \item If $\psi$ is $c$-convex, then $\bar\psi$ is convex.
  \item For $\bs x,\bs y\in\mc X$, $
    \bs y\in\partial^c\psi(\bs x)$ if and only if $
    2\bs y\in\partial\bar\psi(\bs x)$, where $\partial\bar\psi(\bs x)$ denotes the
convex subdifferential of $\bar\psi$ at $\bs x$. 
  \item If $\mu_0\ll\mc L_{\mc H}$, then
  Assumption~\ref{ass:c-mu-0} holds.
  \item $c$ satisfies Assumption~\ref{ass:H-infty} on $\mc H$.
\end{enumerate}
\end{lemma}

{\color{black}
\begin{proof}
Recall that $
  \partial\bar\psi(\bs x)
  =\{
  \bs p\in\mc H:
  \bar\psi(\bs z)\ge
  \bar\psi(\bs x)+\langle \bs p,\bs z-\bs x\rangle_{\mc H}~
  \forall \bs z\in\mc X\}$.
For the quadratic cost, $
  c(\bs x,\bs y)
  =
  \|\bs x-\bs y\|_{\mc H}^2
  =
  \|\bs x\|_{\mc H}^2
  -2\langle \bs x,\bs y\rangle_{\mc H}
  +\|\bs y\|_{\mc H}^2$.

\noindent\textnormal{(i)}
Suppose that $\psi$ is $c$-convex. Then there exists a function
$\phi:\mc X\to\R\cup\{\pm\infty\}$ such that $
  \psi(\bs x)
  =
  \sup_{\bs y\in\mc X}
  \{
    \phi(\bs y)-c(\bs x,\bs y)
  \}$ for all $ \bs x\in\mc X$.
Then, we have $
  \bar\psi(\bs x) =
  \sup_{\bs y\in\mc X}
  \{
    \phi(\bs y)
    -\|\bs y\|_{\mc H}^2
    +2\langle \bs x,\bs y\rangle_{\mc H}
  \}$.
For each fixed $\bs y\in\mc X$, the map $
  \bs x\mapsto
  \phi(\bs y)-\|\bs y\|_{\mc H}^2
  +2\langle \bs x,\bs y\rangle_{\mc H}$
is affine on $\mc H$. Hence $\bar\psi$ is the pointwise supremum of
affine functions, and therefore is convex on $\mc X$.

\medskip
\noindent\textnormal{(ii)}
Fix $\bs x,\bs y\in\mc X$. By definition,
$\bs y\in\partial^c\psi(\bs x)$ if and only if $
  \psi(\bs z)+\|\bs z\|_{\mc H}^2
  \ge
  \psi(\bs x)+\|\bs x\|_{\mc H}^2
  +2\langle \bs y,\bs z-\bs x\rangle_{\mc H}$ for all $ \bs z\in\mc X$,
that is, $
  \bar\psi(\bs z)
  \ge
  \bar\psi(\bs x)
  +\langle 2\bs y,\bs z-\bs x\rangle_{\mc H}$ for all $\bs z\in\mc X$, which corresponds to the condition $
  2\bs y\in\partial\bar\psi(\bs x)$.

\medskip
\noindent\textnormal{(iii)}
Let $\psi$ be $c$-convex. By part \textnormal{(i)}, $\bar\psi$ is
convex. Let $
  \mathrm{dom}_{\mathrm{eff}}(\bar\psi)=
  \{\bs x\in\mc X:\bar\psi(\bs x)<\infty\}$
be the effective domain of $\bar\psi$. Since $\bar\psi$ is convex,
$\mathrm{dom}_{\mathrm{eff}}(\bar\psi)$ is convex. By
\cite[Theorem~2.1.12]{ref:borwein2010convex}, $\bar\psi$ is locally
Lipschitz on the interior of its effective domain. Therefore, by \cite[Theorem~3.2]{ref:evans2025measure}, $\bar\psi$
is differentiable $\mc L_{\mc H}$-almost everywhere on
$\operatorname{int}(\mathrm{dom}_{\mathrm{eff}}(\bar\psi))$.
Since $
  \psi=\bar\psi-\|\cdot\|_{\mc H}^2$,
and $\bs x\mapsto \|\bs x\|_{\mc H}^2$ is smooth, it follows that $\psi$
is also differentiable $\mc L_{\mc H}$-almost everywhere on
$\operatorname{int}(\mathrm{dom}_{\mathrm{eff}}(\bar\psi))$.
Now let $\bs x\in\mathrm{dom}(\partial^c\psi)$. Then
$\partial^c\psi(\bs x)\neq\varnothing$. By part \textnormal{(ii)},
$\partial\bar\psi(\bs x)\neq\varnothing$, and therefore
$\bar\psi(\bs x)<\infty$. Thus $
  \mathrm{dom}(\partial^c\psi)\subseteq \mathrm{dom}_{\mathrm{eff}}(\bar\psi)$.
Since $\mathrm{dom}_{\mathrm{eff}}(\bar\psi)$ is convex, its boundary has
$\mc L_{\mc H}$-measure zero. Hence $\psi$ is
$\mc L_{\mc H}$-almost everywhere differentiable on
$\mathrm{dom}(\partial^c\psi)$. If $\mu_0\ll\mc L_{\mc H}$, the same holds
$\mu_0$-almost surely. Hence, Assumption~\ref{ass:c-mu-0} holds.
\medskip

\noindent (iv) \(\mc H\) is a flat Riemannian manifold, its Riemannian distance is
\(d(\bs x,\bs y)=\|\bs x-\bs y\|_{\mc H}\), and its sectional curvature is identically zero. Therefore,
 \cite[Example~10.36]{ref:villani2008optimal} applied with \(M=\mc  X=\mc Y=\mc H\) and
\(c(\bs x,\bs y)=d(\bs x,\bs y)^2\), implies that the quadratic cost
\(c(x,y)=\|x-y\|_{\mc H}^2\) satisfies
Assumption~\ref{ass:H-infty}.
\end{proof}
}
The preceding lemma shows that, for the quadratic cost, the shifted
potential $\bar\psi=\psi+\|\cdot\|_{\mc H}^2$ is convex, and the
$c$-subdifferential relation becomes an ordinary subgradient relation
for this shifted potential. Consequently, when the potential is
differentiable, the first-order condition in
Theorem~\ref{thm:monge-kantorovich} identifies the optimal map as the
gradient of a convex function, in agreement with Brenier's
convex-gradient characterization of quadratic optimal transport
\cite[Theorem~1.1-1.2]{Brenier1991}.

This observation makes the quadratic case especially amenable to the
orbit framework. If the group action is affine on the ambient Hilbert
space, then every orbit-induced candidate map has an affine form. For
such maps, being the gradient of a convex quadratic potential reduces to
an algebraic condition: the linear part must be self-adjoint and
positive semidefinite. We therefore introduce affine actions in a form
that separates the translation part from the linear part.

\begin{definition}
Let $G$ be a Lie group acting on $\mc X$ via a homomorphism $
\alpha:G\to \mathrm{Diff}(\mc X)$.
The action $\alpha$ is induced by an affine representation on $\mc H$ if there exist maps
 $
b : G \to \mc H$,~$\pi : G \to \mathrm{GL}(\mc H)$,
such that $
\alpha(g)(\bs x) = b(g) + \pi(g)\bs x \in \mc X$ for all $g \in G,~\forall \bs x \in \mc X$,
and for all $g_1, g_2\in G$: 
$\pi(g_1 g_2) = \pi(g_1)\pi(g_2),$ and $
b(g_1 g_2) = b(g_1) + \pi(g_1)b(g_2)$.
\label{def:affine-induced-action}
\end{definition}

\begin{assumption}
The action $\alpha:G\to \mathrm{Diff}(\mc X)$ is induced by an affine representation on $\mc H$ in the sense of \Cref{def:affine-induced-action}, with associated maps $
b:G\to \mc H$ and 
$\pi:G\to \mathrm{GL}(\mc H)$.
    \label{ass:affine-induced-pi}
\end{assumption}
Under Assumption~\ref{ass:affine-induced-pi}, the Lie group orbit transport problem \eqref{eq:transport-wihtin-orbits} becomes
\[
\mathds J_c(g_0,g_1)
=
\inf_{h\in \mathrm{Stab}_G(\rho)}
\int_{\mc X}
\left\|
b(g_0)-b(g_1h)+(\pi(g_0)-\pi(g_1h))\bs z
\right\|_{\mc H}^2
\diff\rho(\bs z).
\]
Since the integrand is the square of an affine function of $\bs z$ and $\rho$ has finite second moment, $\mathds J_c(g_0,g_1)<\infty$. Moreover, for $i=0,1$, the measure $\mu_i=\alpha(g_i)_{\#}\rho$ is absolutely continuous with respect to $\mathcal L_{\mc H}$ by Lemma~\ref{lem:ac}. Finally, for every $h\in \mathrm{Stab}_G(\rho)$, the associated transport map $T_h = \alpha(g_1 h g_0^{-1})$ admits the following explicit form $
T_h(\bs x)
=
b(g_1 h g_0^{-1})+\pi(g_1 h g_0^{-1})\bs x$ and 
belongs to $\mathcal T(\mu_0,\mu_1)$ by Lemma~\ref{lem:factorisation}. Therefore, $
\mathds K_c(\mu_0,\mu_1)\le \mathds M_c(\mu_0,\mu_1)\le \mathds J_c(g_0,g_1)<\infty$.

The orbit problem thus yields a concrete family of admissible affine transport maps, one for each $h \in \Stab_G(\rho)$. Since an affine map is the gradient of a convex quadratic if and only if its linear part is self-adjoint and positive semidefinite, \Cref{thm:monge-kantorovich} reduces the optimality question to a purely algebraic condition on $\pi(g_1 h g_0^{-1})$. The following result shows that, whenever this
condition holds for some $h\in\Stab_G(\rho)$, the orbit upper bound
$\mathds J_c$ is tight and coincides with both $\mathds M_c$ and
$\mathds K_c$.

\begin{theorem}
Suppose Assumption~\ref{ass:affine-induced-pi} holds.
    If there exists $h \in \Stab_{G}(\rho)$ such that the operator $\pi(g_1  h g_0^{-1})$ is self-adjoint and positive semidefinite on $\mc H$, then
    \begin{enumerate}[label=\textnormal{(\roman*)}]
        \item $T_h (\bs x) = b(g_1  h  g_0^{-1}) + \pi(g_1  h  g_0^{-1})\bs x$ solves \eqref{eq:monge-problem}, and it is the unique optimal Monge map up to $\mu_0$-almost
everywhere equality,
        \item $(\mathrm{id}_{\mc X}, T_h)_{\#} \mu_0 \in \Pi(\mu_0, \mu_1)$ solves \eqref{eq:kantorovich-problem}, and it is the unique optimal Kantorovich plan,
        \item $h$ solves \eqref{eq:transport-wihtin-orbits},
         \item $\mathds K_c(\mu_0, \mu_1) = \mathds M_c(\mu_0, \mu_1) = \mathds J_c(g_0,g_1)$. 
    \end{enumerate}
    \label{thm:quadratic-group-tightness}
\end{theorem}

{\color{black}
\begin{proof}
For convenience, define $m_h = b(g_1 h g_0^{-1})$ and $A_h =\pi(g_1 h g_0^{-1})$. 
Then, $T_h(\bs x) = m_h + A_h \bs x$, $\bs x\in \mc X$. To verify that $T_h$ satisfies \eqref{eq:T-optimal-c-convex-equation} for some $c$-convex function, define $
\varphi_h(\bs x)
=
-\|\bs x\|_{\mc H}^2
+
2\langle  m_h,\bs x\rangle_{\mc H}
+
\langle \bs x,A_h\bs x\rangle_{\mc H}$,~$\bs x\in \mc X$.
{Set $
  u_h(\bs x)
  \triangleq 
  \varphi_h(\bs x)+\|\bs x\|_{\mc H}^2
  =
  2\langle m_h,\bs x\rangle_{\mc H}
  +
  \langle \bs x,A_h\bs x\rangle_{\mc H}$.
Since $A_h$ is self-adjoint and positive semidefinite, $u_h$ is convex.
Moreover, $
  \nabla_{\bs x}u_h(\bs z)=2m_h+2A_h\bs z=2T_h(\bs z)$.
Thus the supporting hyperplane inequality for $u_h$ gives
\[
  u_h(\bs x)
  \ge
  u_h(\bs z)+2\langle T_h(\bs z),\bs x-\bs z\rangle_{\mc H}
  \qquad
  \forall \bs x,\bs z\in\mc X.
\]
Equivalently, because $c(\bs z, T_h(\bs z)) - c(\bs x, T_h(\bs z)) = 2 \langle \bs x - \bs z, T_h(\bs z)\rangle_{\mc H} + \|\bs z\|_{\mc H}^2 - \|\bs x \|_{\mc H}^2$, we have 
\[
  \varphi_h(\bs x)
  \ge
  \varphi_h(\bs z)
  +
  c(\bs z,T_h(\bs z))
  -
  c(\bs x,T_h(\bs z))
  \qquad
  \forall \bs x,\bs z\in\mc X.
\]
Now define $
  \eta_h(\bs y)\triangleq
  \sup\left\{
  \varphi_h(\bs z)+c(\bs z,T_h(\bs z)) \mid \bs z \in \mc Z_{\bs y}
  \right\}$, where $\mc Z_{\bs y } = 
 \{ \bs z\in\mc X \mid T_h(\bs z)=\bs y\}$
with the convention that the supremum over the empty set is $-\infty$.
The preceding inequality implies $
  \varphi_h(\bs x)\ge
  \eta_h(\bs y)-c(\bs x,\bs y)$ for all $\bs x,\bs y\in\mc X.$ Taking supremum over $\bs y$ results in 
\[ \varphi_h(\bs x)\geq \sup\limits_{\bs y \in \mc X} \eta_h(\bs y) - c(\bs x, \bs y)\quad \forall \bs x \in \mc X.\]
Conversely, fix $\bs x\in\mc X$ and take
$\bs y=T_h(\bs x)$. Then $\bs x\in \mc Z_{\bs y}$, so $
  \eta_h(T_h(\bs x))
  \ge
  \varphi_h(\bs x)+c(\bs x,T_h(\bs x))$.
Therefore
\[
  \sup_{\bs y\in\mc X}
  \eta_h(\bs y)-c(\bs x,\bs y)
  \ge
  \eta_h(T_h(\bs x))-c(\bs x,T_h(\bs x))
  \ge
  \varphi_h(\bs x).
\]
Combining the two inequalities gives $
  \varphi_h(\bs x)
  =
  \sup_{\bs y\in\mc X}
  \eta_h(\bs y)-c(\bs x,\bs y)$,
so $\varphi_h$ is $c$-convex.}

Since $\varphi_h$ is a quadratic polynomial on $\mc H$, it is of class $\mc C^1$ on $\mc X$. Furthermore, $
\nabla_{\bs x}\langle \bs x,A_h\bs x\rangle_{\mc H}
$ $=$
$(A_h+A_h^*)\bs x$,
and because $A_h$ is self-adjoint, we have $
\nabla_{\bs x}\langle \bs x,A_h\bs x\rangle_{\mc H}
=
2A_h\bs x$.
Hence $
\nabla_{\bs x}\varphi_h(\bs x)
=
-2\bs x+2m_h+2A_h\bs x$.
On the other hand, $
T_h(\bs x)=m_h+A_h\bs x$,
so
\[
\nabla_{\bs x}c(\bs x,T_h(\bs x))
=
2(\bs x-T_h(\bs x))
=
2\bs x-2m_h-2A_h\bs x=  -\nabla_{\bs x} \varphi_h(\bs x).
\]
Thus $T_h$ satisfies \eqref{eq:T-optimal-c-convex-equation} everywhere on $\mc X$ with the $c$-convex potential $\varphi_h$.

{\color{black}Observe first that the preceding \(c\)-convexity argument holds verbatim
on the ambient Hilbert space \(\mc H\), since \(u_h\) is a convex quadratic
on \(\mc H\). Let \(\widetilde c(\bs x,\bs y)=\|\bs x-\bs y\|_{\mc H}^2\),
and view \(\mu_i\) as probability measures \(\widetilde\mu_i\) on \(\mc H\)
by setting \(\widetilde\mu_i(B)=\mu_i(B\cap\mc X)\) for every $B \in \mc B(\mc H)$. Then
\(\widetilde\mu_0\ll\mc L_{\mc H}\), the cost \(\widetilde c\) satisfies
Assumption~\ref{ass:c-regularity}, and
\(\nabla_{\bs x}\widetilde c(\bs x,\bs y)=2(\bs x-\bs y)\), so injectivity
in \(\bs y\) is immediate. By
\Cref{lem:quadratic-c-convex}(iii)-(iv),
Assumptions~\ref{ass:c-mu-0} and~\ref{ass:H-infty} hold on \(\mc H\).
Moreover, \(\mathds K_{\widetilde c}(\widetilde\mu_0,\widetilde\mu_1)<\infty\),
because the affine candidate has finite quadratic cost. Hence
\Cref{thm:monge-kantorovich} applied on \(\mc H\) shows that \(T_h\) is
the unique optimal Monge map and that
\((\mathrm{id}_{\mc H},T_h)_\#\widetilde\mu_0\) is the unique optimal
Kantorovich plan. Since the extended measures are concentrated on \(\mc X\)
and \(\{T_h(\bs x ) : \bs x \in \mc X\}\subseteq\mc X\), these ambient optimality statements
restrict to the original problem on \(\mc X\). This proves
\textnormal{(i)} and \textnormal{(ii)}.}

Next, by (i), we have $
\mathds M_c(\mu_0,\mu_1)
=
\int_{\mc X} c(\bs x,T_h(\bs x))\diff\mu_0(\bs x)$.
Using $\mu_0=\alpha(g_0)_{\#}\rho$ and the identity $
T_h\circ \alpha(g_0)
=
\alpha(g_1)\circ \alpha(h)\circ \alpha(g_0)^{-1}\circ \alpha(g_0)
=
\alpha(g_1)\circ \alpha(h)$,
we obtain
\begin{align*}
\mathds M_c(\mu_0,\mu_1)
&=
\int_{\mc X} c\bigl(\alpha(g_0)(\bs z),(T_h\circ \alpha(g_0))(\bs z)\bigr)\diff\rho(\bs z)=
\int_{\mc X} c\bigl(\alpha(g_0)(\bs z),(\alpha(g_1)\circ \alpha(h))(\bs z)\bigr)\diff\rho(\bs z).
\end{align*}
Since $h\in \mathrm{Stab}_G(\rho)$, $h$ is feasible for \eqref{eq:transport-wihtin-orbits}. Hence, by definition of $\mathds J_c(g_0,g_1)$,
\[
\mathds J_c(g_0,g_1)
\le
\int_{\mc X} c\bigl(\alpha(g_0)(\bs z),(\alpha(g_1)\circ \alpha(h))(\bs z)\bigr)\diff\rho(\bs z)
=
\mathds M_c(\mu_0,\mu_1).
\]
On the other hand, Theorem~\ref{thm:closed-form}(i) yields $
\mathds M_c(\mu_0,\mu_1)\le \mathds J_c(g_0,g_1)$.
Therefore, $
\mathds M_c(\mu_0,\mu_1)=\mathds J_c(g_0,g_1)$,
and consequently $h$ attains the infimum in \eqref{eq:transport-wihtin-orbits}, proving \textnormal{(iii)}.

Finally, since $T_h$ solves \eqref{eq:monge-problem} and
$(\mathrm{id}_{\mc X},T_h)_\#\mu_0$ solves
\eqref{eq:kantorovich-problem}, both problems attain the same cost. This observation proves assertion (iv) and completes the proof. 
\end{proof}}

Indeed, \Cref{thm:quadratic-group-tightness} reduces optimality to an
algebraic question: can one choose a stabilizing element
$h\in\Stab_G(\rho)$ such that the linear part $
  \pi(g_1hg_0^{-1})$
is self-adjoint and positive semidefinite on $\mc H$?
We now give a structural criterion, based on Cartan theory, that
guarantees the existence of such a stabilizing element.

We begin with the necessary Lie-algebraic background. Let $L$ be a Lie
group with identity element $e$. We write $\Lie(L)$ for its Lie
algebra, that is, the tangent space $T_eL$ endowed with its canonical
Lie bracket. If $\theta:L\to L$ is a Lie group involution, then
$\theta^2=\mathrm{id}_L$. Differentiating this identity at the
identity element gives $
  (\diff\theta_e)^2=\mathrm{id}_{\Lie(L)}$.
Thus $\diff\theta_e$ is a linear involution on $\Lie(L)$. Since the
polynomial $t^2-1=(t-1)(t+1)$ has distinct roots, $\Lie(L)$ splits
as the direct sum of the $+1$ and $-1$ eigenspaces of $\diff\theta_e$: $
  \Lie(L)=\mathfrak k\oplus\mathfrak p$,
where
\[
  \mathfrak k=\{X\in\Lie(L):\diff\theta_e(X)=X\},
  \qquad
  \mathfrak p=\{X\in\Lie(L):\diff\theta_e(X)=-X\}.
\]
This Lie-algebra decomposition follows only from the fact that
$\theta$ is an involution.

The Cartan property used below is stronger than this infinitesimal
splitting. It is a global factorization statement: every element of the
group can be written uniquely and smoothly as a product of a fixed-point
factor and the exponential of an element from the $-1$-eigenspace. In
this factorization, the fixed-point subgroup plays the role of a
rotation-like part, while the exponential factor plays the role of a
symmetric or stretching part. A standard sufficient condition for this
global factorization is that $L$ is a real reductive linear Lie group
and that $\theta$ is a Cartan involution; see, for example,
\cite[\S~VII.2]{knapp1996lie}.

In our application, this Cartan structure need not be imposed on the
full acting group $G$. 
The quadratic optimality certificate presented in \Cref{thm:quadratic-group-tightness} depends
only on the linear part of the candidate map. We therefore impose the
Cartan condition on the linear image $
  L\triangleq \pi(G)\subseteq\GL(\mc H)$.
\begin{definition}[Cartan decomposition]
\label{def:linear-cartan-decomposition}
Let $L\subseteq\GL(\mc H)$ be a Lie subgroup, and let
$\theta:L\to L$ be a Lie group involution. Write
\[
  \mathfrak l=\Lie(L),
  \qquad
  \mathfrak k=\{X\in\mathfrak l:\diff\theta_e(X)=X\},
  \qquad
  \mathfrak p=\{X\in\mathfrak l:\diff\theta_e(X)=-X\},
\]
and let $
  L^\theta=\{\ell\in L:\theta(\ell)=\ell\}$.
We say that $(L,\theta)$ admits a \emph{global Cartan decomposition}
if the map $
  L^\theta\times\mathfrak p\to L$,
  $(k,S)\mapsto k\exp(S)$,
is a diffeomorphism.
\end{definition}
{\color{black}

}
}

\begin{lemma}[Cartan absorption]
\label{lem:cartan-absorption}
Let $(L,\theta)$ admit a global Cartan decomposition in the sense of
Definition~\ref{def:linear-cartan-decomposition}. Then, for every
$\ell_0,\ell_1\in L$, there exists $k\in L^\theta$ such that $
  \ell_1k\ell_0^{-1}\in\exp(\mathfrak p)$.
\end{lemma}
\begin{proof}
Define $
\eta:L\to L$ such that
$\eta(\ell)=\theta(\ell)^{-1}\ell$.
We will first show that
$\{\eta(\ell) : \ell \in L\}=\exp(\mathfrak p)$.
Let $\ell\in L$. By the Cartan decomposition, there exist $k\in L^\theta$ and $X\in\mathfrak p$ such that $\ell=k\exp(X)$.
Since $\theta(k)=k$ and $\diff \theta_e(X)=-X$, we have
\begin{equation}
\theta(\exp(X))=\exp({\diff \theta_e(X)})=\exp({-X}).
\label{eq:theta-inv-exp}
\end{equation}
Therefore
\begin{align*}
\eta(\ell)
&=
\theta(k \exp(X))^{-1}k \exp(X)
=
(\theta(k)\theta(\exp(X)))^{-1}k \exp(X)\\
&=
(k \exp({-X}))^{-1}k \exp(X)
=
\exp({2X})\in \exp(\mathfrak p),
\end{align*}
which implies $ \{\eta(\ell) : \ell \in L\} \subseteq \exp(\mathfrak p)$.
Conversely, if $p=\exp(Y)\in \exp(\mathfrak p)$ with $Y\in\mathfrak p$, then
\[
\eta(\exp({Y/2}))
=
\theta(\exp({Y/2}))^{-1}\exp({Y/2})
=
\exp({-Y/2})^{-1}\exp({Y/2})
=
\exp(Y)
=
p,
\]
where the second equality follows by \eqref{eq:theta-inv-exp}
Hence $\exp(\mathfrak p)\subseteq \{\eta(\ell) : \ell \in L
\}$, and thus $\{\eta(\ell) : \ell \in L\}=\exp(\mathfrak p)$.

Next, for each $a\in L$, define the map $
\tau_a:\exp(\mathfrak p)\to L$ as
$\tau_a(p)=\theta(a)^{-1}pa$.
In what follows, we will show that $\{\tau_a(p): p \in \exp(\mathfrak p)\}\subseteq \exp(\mathfrak p)$ for every $a\in L$.
Indeed, if $p\in \exp(\mathfrak p)$, then by $\{\eta(\ell) : \ell\in L\}=\exp(\mathfrak p)$, there exists $x\in L$ such that
$p=\eta(x)=\theta(x)^{-1}x$. Therefore
\[
\tau_a(p)
=
\theta(a)^{-1}\theta(x)^{-1}x a
=
\theta(xa)^{-1}(xa)
=
\eta(xa)\in \exp(\mathfrak p).
\]
Thus $\exp(\mathfrak p)$ is invariant under $\tau_a$. Now fix $\ell_0,\ell_1\in L$. By the Cartan decomposition, we may write $
\theta(\ell_0)^{-1}\ell_1 = k p$ for some $k\in L^\theta,~ p\in \exp(\mathfrak p)$.
Set $
h=k^{-1}\in L^\theta$.
Since $k\in L^\theta$, we have $\theta(k) =k$ by definition of $L^\theta$. 
As group automorphisms preserve inverses $\theta(k^{-1}) = \theta(k)^{-1} =k^{-1}$. Hence $k^{-1}$ indeed belongs to $L^\theta$ and we have $\theta(k^{-1})^{-1}=k$. Therefore,
\[
kpk^{-1}=\theta(k^{-1})^{-1} p k^{-1} = \tau_{k^{-1}}(p)\in \exp(\mathfrak p).
\]
Now, define $
q = kpk^{-1}\in \exp(\mathfrak p)$.
Then
\[
\ell_1 h \ell_0^{-1} = \ell_1 k^{-1} \ell_0^{-1} 
=
\theta(\ell_0)k pk^{-1} \ell_0^{-1}
=
\theta(\ell_0)q\ell_0^{-1}
=
\tau_{\ell_0^{-1}}(q)\in \exp(\mathfrak p),
\]
where the second equality follows because $\theta(\ell_0)^{-1} \ell_1 = kp$, and thus $\ell_1 = \theta(\ell_0) kp $, and the inclusion follows because $\exp(\mathfrak p)$ is invariant under $\tau_a$.
This concludes our proof. 
\end{proof}
In words, Lemma~\ref{lem:cartan-absorption} shows that, after right-composing by a suitable element of $L^\theta$, the relative element between $\ell_0$ and $\ell_1$ can always be moved into the symmetric factor $\exp(\mathfrak p)$. Thus the compact part of the Cartan decomposition can be absorbed into a stabilizing group element, leaving only the noncompact factor.

\begin{theorem}[Cartan criterion]
\label{thm:cartan-tightness}
Suppose Assumption~\ref{ass:affine-induced-pi} holds, and set $
  L=\pi(G)\subseteq\GL(\mc H)$.
Assume that $(L,\theta)$ admits a global Cartan decomposition in the
sense of Definition~\ref{def:linear-cartan-decomposition}, with $\Lie(L)=\mathfrak k\oplus\mathfrak p$.
Assume further that:
\begin{enumerate}[label=\textnormal{(\roman*)}]
  \item every $S\in\mathfrak p$ is self-adjoint on $\mc H$, \label{thm:hyp:cartan-self-adj}
  \item $L^\theta\subseteq \pi(\Stab_G(\rho))$.\label{thm:hyp:reference-inv}
\end{enumerate}
Then there exists $h\in\Stab_G(\rho)$, with $\pi(h)\in L^\theta$,
such that:
\begin{enumerate}[label=\textnormal{(\alph*)}]
  \item $\pi(g_1hg_0^{-1})$ is self-adjoint and positive definite on
  $\mc H$,
  \item $T_h=\alpha(g_1hg_0^{-1})$ solves \eqref{eq:monge-problem}, and
  is the unique optimal Monge map up to $\mu_0$-almost everywhere
  equality,
  \item $(\mathrm{id}_{\mc X},T_h)_\#\mu_0$ solves
  \eqref{eq:kantorovich-problem}, and is the unique optimal
  Kantorovich plan, 
  \item $h$ solves \eqref{eq:transport-wihtin-orbits},
  \item $
    \mathds K_c(\mu_0,\mu_1)
    =
    \mathds M_c(\mu_0,\mu_1)
    =
    \mathds J_c(g_0,g_1)$.
\end{enumerate}
\end{theorem}
\begin{proof}
Set $
  \ell_i=\pi(g_i)$ for $i=0,1$.
By Lemma~\ref{lem:cartan-absorption}, there exists
$k\in L^\theta$ such that $
  \ell_1k\ell_0^{-1}\in\exp(\mathfrak p)$.
By assumption \textnormal{(ii)}, choose
$h\in\Stab_G(\rho)$ such that $
  \pi(h)=k$.
Then $
  \pi(g_1hg_0^{-1})
  =
  \ell_1k\ell_0^{-1}
  =
  \exp(S)$
for some $S\in\mathfrak p$. By assumption \textnormal{(i)}, the operator
$S$ is self-adjoint on $\mc H$. Hence $\exp(S)$ is self-adjoint and
positive definite on $\mc H$. This proves \textnormal{(a)}.
The remaining assertions follow from
Theorem~\ref{thm:quadratic-group-tightness}.
\end{proof}

\section{Examples}
\label{sec:examples}
In the examples below, we separate the group-theoretic mechanism from
the choice of reference law. For each mechanism, we first specify the
state space, the acting group, and the affine representation inducing
the action. We then identify the linear image $
  L=\pi(G)$
and verify the Cartan structure and self-adjointness hypotheses of
\Cref{thm:cartan-tightness}. Next, we identify reference laws $\rho$
for which the compact invariance condition $
  L^\theta\subseteq \pi(\Stab_G(\rho))$
holds. Once these structural hypotheses are verified, we apply
\Cref{thm:cartan-tightness} to obtain the optimal Monge map, the
optimal Kantorovich plan, and the closed-form transport value.

\subsection{The affine mechanism on $\R^d$}
Let the state space be $
\mc X=\R^d$, and let $G$ be the affine group $G_{\mathrm{aff}}=\R^d\rtimes \mathrm{GL}(d)$ with multiplication $(\bs m, \bs A)(\bs n, \bs B) = (\bs m +\bs A \bs n, \bs A \bs B)$ acting on $\R^d$ by $
\alpha:G_{\mathrm{aff}}\times \R^d \to \R^d,~
\alpha((\bs m,\bs A))(\bs x)=\bs m+\bs A\bs x$.
The action $\alpha$ is induced by an affine
representation on $\mc H=\R^d$, equipped with the standard inner product $\langle \bs x, \bs y \rangle_{\mc H} = \bs x^\top \bs y$ in the sense of
Definition~\ref{def:affine-induced-action}, with associated maps $
b:G_{\mathrm{aff}}\to \R^d$ and
$\pi:G_{\mathrm{aff}}\to \mathrm{GL}(\R^d)$
given by $
b((\bs m,\bs A))=\bs m$
and $\pi((\bs m,\bs A))=\bs A$. Thus $\alpha(g)(\bs x) = b(g) + \pi(g)(\bs x)$ for every $ g \in G_{\rm aff}$ and $\bs x \in \R^d$ and  Assumption~\ref{ass:affine-induced-pi} holds. 

\noindent\paragraph{Cartan structure} The relevant Cartan structure is carried by the linear part: $L = \pi(G_{\mathrm{aff}})$ $= \GL(d)$, equipped with the Cartan involution $\theta(\bs A) = \bs A^{-\top}$, and thus $L^\theta  = \mathcal O(d)$. 
The differential of $\theta$ at the identity is $\diff \theta_e(\bs X) = - \bs X^\top$, so  $
\mathfrak p
=
\{\bs X\in\mathfrak{gl}(d):\diff\theta_e(\bs X)=-\bs X\}
=
\{\bs X\in\mathbb R^{d\times d}:\bs X^\top=\bs X\}
=
\Sym(d)$. 
Since $L=\GL(d)$ and $\theta(\bs A)=\bs A^{-\top}$ is the standard
Cartan involution, the global Cartan decomposition is the
polar decomposition $
  \GL(d)=\mc O(d)\exp(\Sym(d))$.
Thus the map $  \mc O(d)\times\Sym(d)\to\GL(d),~
  (\bs Q,\bs S)\mapsto\bs Q\exp(\bs S)$,
is a diffeomorphism, and $(L,\theta)$ admits a global Cartan
decomposition in the sense of
\Cref{def:linear-cartan-decomposition}.
Finally, note that every $\bs S \in \mathfrak p$ satisfies $\langle \bs S \bs x, \bs y \rangle = \bs x^\top \bs S \bs y = \langle \bs x, \bs S \bs y\rangle$ for all $\bs x, \bs y \in \R^d$. Therefore, the linear part of the affine mechanism satisfies
hypothesis~\ref{thm:hyp:cartan-self-adj} of
Theorem~\ref{thm:cartan-tightness}. Thus the structural part of Theorem~\ref{thm:cartan-tightness} is
already in place for the affine mechanism. 

\paragraph{Reference distributions} We now identify a class of
reference laws $\rho$ for which the compact invariance condition~\ref{thm:hyp:reference-inv} of \Cref{thm:cartan-tightness} also
holds, so that the theorem applies on the corresponding affine orbit.

\begin{definition}[Elliptical family]
A random vector $\bs x\in\R^d$ has an elliptical distribution with characteristic generator $\varsigma:\R_{\geq}\to\R$ if its characteristic function evaluated at $\bs t \in \R^d$ is $\exp(i\bs t^\top\bs m)\varsigma(\bs t^\top \bs S \bs t),$
for some location $\bs m\in\R^d$ and dispersion $\bs S\in\mathbb S_{\succ}^d$; and we write $\bs x \sim \mathcal E_\varsigma(\bs m, \bs S)$.
When $\varsigma'(0)$ exists and is finite, then the covariance matrix of $\bs x \sim \mathcal E_\varsigma(\bs m, \bs S)$, satisfies
$\mathrm{Cov}(\bs x)=(-2\varsigma'(0))\bs S$.
Reparameterizing the generator $\varsigma$ as
$\tilde\varsigma(s)=\varsigma(s/(-2\varsigma'(0)))$ yields
$\mathrm{Cov}(\bs x)=\bs S$ with $\bs x \sim \mathcal E_{\tilde\varsigma}(\bs m,\bs S)$. 
\end{definition}
The elliptical class is broad and includes heavy-tailed examples such as
Cauchy and multivariate stable laws. In this subsection, however, we
restrict to absolutely continuous elliptical laws with finite second
moment, so that the quadratic cost is finite. This finite-moment
subclass includes the multivariate Gaussian, finite-variance Student-$t$,
and multivariate Laplace families.
We work throughout with the reparametrized generator $\tilde\varsigma$ and set $\rho = \mathcal E_{\tilde \varsigma}(\bs 0_d, \bs I_d)$. 
\begin{lemma}
\label{lem:elliptical-affine-orbit}
\textnormal{(i)} $\{\alpha(g)_\#\rho:g\in G_{\mathrm{aff}}\}
=\{\mathcal E_{\tilde\varsigma}(\bs m,\bs S):
\bs m\in\R^d,\bs S\in\mathbb S_{\succ}^d\}$,
\textnormal{(ii)} $
\Stab_{G_{\mathrm{aff}}}(\rho)
=
\{(\bs 0_d,\bs Q):\bs Q\in\mc O(d)\}$.
\end{lemma}
\begin{proof}
(i) Follows by \cite[\S~1.5]{ref:muirhead2009aspects}.
Indeed, if $\bs X\sim\rho$ and $\bs Y=\bs m+\bs A\bs X$, then characteristic function of $\bs Y$ evaluated at $\bs t \in \R^d$ is $
\exp({i\bs t^\top\bs m})
\tilde\varsigma(\bs t^\top\bs A\bs A^\top\bs t)$,
so $\bs Y\sim\mathcal E_{\tilde\varsigma}(\bs m,\bs A\bs A^\top)$.
Conversely, every $\bs S\in\mathbb S_{\succ}^d$ can be written as
$\bs S=\bs A\bs A^\top$ with $\bs A=\bs S^{1/2}$.

(ii) Let $(\bs u,\bs A)\in\Stab_{G_{\mathrm{aff}}}(\rho)$ and let
$\bs x\sim\rho$. Then $
\bs u+\bs A\bs x\sim\rho$.
Since $\rho$ has mean $\bs 0_d$, taking expectations gives $
\bs u+\bs A\mathbb E_{\bs X \sim \rho}[\bs X]=\bs 0_d$,
and hence $\bs u=\bs 0_d$. Since $\rho$ has covariance $\bs I_d$,
taking covariances gives $
\bs A \mathrm{Cov}(\bs x) \bs A^\top
=
\mathrm{Cov}(\bs x)$,
that is, $
\bs A\bs A^\top=\bs I_d$.
Thus $\bs A\in\mc O(d)$, so $
\Stab_{G_{\mathrm{aff}}}(\rho)
\subseteq
\{(\bs 0_d,\bs Q):\bs Q\in\mc O(d)\}$.
Conversely, let $\bs Q\in\mc O(d)$. The characteristic function of
$\rho$ is $
\phi_\rho(\bs t)=\tilde\varsigma(\|\bs t\|^2)$.
The characteristic function of $\bs Q\bs x$ is $
\bs t\mapsto \phi_\rho(\bs Q^\top\bs t)
=
\tilde\varsigma(\|\bs Q^\top\bs t\|^2)
=
\tilde\varsigma(\|\bs t\|^2)
=
\phi_\rho(\bs t)$.
Hence $\bs Q\bs x\sim\bs x$, and therefore
$(\bs 0_d,\bs Q)\in\Stab_{G_{\mathrm{aff}}}(\rho)$.
\end{proof}

We now verify the hypotheses of
Theorem~\ref{thm:cartan-tightness}.
By Lemma~\ref{lem:elliptical-affine-orbit}(ii) $
\pi(\Stab_{G_{\mathrm{aff}}}(\rho))
=
\mc O(d)
=
L^\theta$. Thus hypothesis~\ref{thm:hyp:reference-inv} of
\Cref{thm:cartan-tightness} also holds. Consequently, all hypotheses of
\Cref{thm:cartan-tightness} are satisfied for the affine mechanism.

\begin{corollary}[Optimal transport between elliptical distributions]
\label{cor:elliptical-ot}
Let $\mu_i=\mc E_{\tilde\varsigma}(\bs m_i,\bs\Sigma_i)$ with
$\bs\Sigma_i\in\mathbb S_{\succ}^d$, $i=0,1$ and $c(\bs x,\bs y)=\|\bs x-\bs y\|_{\mc H}^2$. Then:
\begin{enumerate}[label=\textnormal{(\alph*)}]
  \item the unique optimal Monge map from $\mu_0$ to $\mu_1$ is
  $T\opt(\bs x)=\bs m_1+\bs A\opt(\bs x-\bs m_0)$, where $
    \bs A\opt
    =\bs\Sigma_0^{-\hf}
\bigl(\bs\Sigma_0^{\hf}\bs\Sigma_1\bs\Sigma_0^{\hf}\bigr)^{\hf}
    \bs\Sigma_0^{-\hf}$,
  \item $(\mathrm{id}_{\R^d},T\opt)_\#\mu_0$ is the unique
  optimal Kantorovich plan,
  \item $\mathds K_c(\mu_0,\mu_1)
  =\|\bs m_0-\bs m_1\|^2
  +\Tr(\bs\Sigma_0)+\Tr(\bs\Sigma_1)
  -2\Tr\bigl(
(\bs\Sigma_0^{\hf}\bs\Sigma_1\bs\Sigma_0^{\hf})^{\hf}\bigr)$, 
  \item $\mathds M_c(\mu_0,\mu_1)
  =\mathds K_c(\mu_0,\mu_1)$.
\end{enumerate}
\end{corollary}
\begin{proof}
For $i=0,1$, write $g_i=(\bs m_i,\bs\Sigma_i^{\hf})\in G_{\mathrm{aff}}$ so
that $\mu_i=\alpha(g_i)_\#\rho$, where $\rho = \mc E_{\tilde \varsigma}(\bs 0_d, \bs I_d)$.
By Theorem~\ref{thm:cartan-tightness}, there exists
$h\in\Stab_{G_{\mathrm{aff}}}(\rho)$ with
$\pi(h)\in\mc O(d)$ such that
$\pi(g_1 h g_0^{-1})$ is self-adjoint and positive definite.
By Lemma~\ref{lem:elliptical-affine-orbit}(ii),
$h=(\bs 0,\bs Q)$ for some $\bs Q\in\mc O(d)$.
A direct computation gives $ T_h = \alpha(g_1)\circ\alpha(h)\circ\alpha(g_0)^{-1}$, and hence $ T_h(\bs x)
  =
  \bs m_1+\bs A_h(\bs x-\bs m_0),$
where
$\bs A_h
=\pi(g_1)\pi(h)\pi(g_0)^{-1}
=\bs\Sigma_1^{\hf}\bs Q\bs\Sigma_0^{-\hf}$.
Therefore, by Theorem~\ref{thm:cartan-tightness}(b), $T_h$ is the
unique optimal Monge map from $\mu_0$ to $\mu_1$, up to
$\mu_0$-almost everywhere equality.

\noindent\emph{Identification of $\bs A_h$.}
Set $\bs R
=\bs\Sigma_0^{\hf}\bs A_h\bs\Sigma_0^{\hf}
=\bs\Sigma_0^{\hf}\bs\Sigma_1^{\hf}\bs Q$. Because $\pi(g_1 h g_0^{-1})$ is self-adjoint and positive definite on $\R^d$, we have $
  \bs A_h=\bs A_h^\top\succ0$. This implies $
  \bs R=\bs R^\top\succ0$.
Moreover, since $\bs Q$ is orthogonal,
\[
  \bs R^2
  =\bs R\bs R^\top
  =\bs\Sigma_0^{\hf}\bs\Sigma_1^{\hf}
  \bs Q\bs Q^\top
  \bs\Sigma_1^{\hf}\bs\Sigma_0^{\hf}
  =\bs\Sigma_0^{\hf}\bs\Sigma_1\bs\Sigma_0^{\hf},
\]
so
$\bs R
=(\bs\Sigma_0^{\hf}\bs\Sigma_1\bs\Sigma_0^{\hf})^{\hf}$
and
$\bs A_h
=\bs\Sigma_0^{-\hf}\bs R\bs\Sigma_0^{-\hf}
=\bs\Sigma_0^{-\hf}
(\bs\Sigma_0^{\hf}\bs\Sigma_1\bs\Sigma_0^{\hf})^{\hf}
\bs\Sigma_0^{-\hf}$. Thus $\bs A_h = \bs A\opt$, and therefore $T_h = T\opt$. 

\medskip\noindent\emph{Transport value.}
By Theorem~\ref{thm:cartan-tightness}(d), $h = (\bs 0, \bs Q)$ solves \eqref{eq:transport-wihtin-orbits}. Hence, we have
\begin{align*}
  \mathds J_c(g_0,g_1)
  &=\int_{\R^d}
  \bigl\|(\bs m_0-\bs m_1)
  +(\bs\Sigma_0^{\hf}-\bs\Sigma_1^{\hf}\bs Q)\bs z
  \bigr\|^2\diff\rho(\bs z)\\
  &=\|\bs m_0-\bs m_1\|^2
  +2(\bs m_0-\bs m_1)^\top
  (\bs\Sigma_0^{\hf}-\bs\Sigma_1^{\hf}\bs Q)
  {\int_{\R^d}\bs z\diff\rho(\bs z)}\\
&\hspace{0.3cm}  +\Tr\bigl(
  (\bs\Sigma_0^{\hf}-\bs\Sigma_1^{\hf}\bs Q)
  (\bs\Sigma_0^{\hf}-\bs\Sigma_1^{\hf}\bs Q)^\top
  \bigr)\\
  &=\|\bs m_0-\bs m_1\|^2
  +\Tr(\bs\Sigma_0)+\Tr(\bs\Sigma_1)
  -2\Tr(\bs R),
\end{align*}
where the last equality follows because $\int_{\R^d}\bs z\diff\rho(\bs z) = 0$, and the trace expands using $\bs Q \bs Q ^\top = \bs I_d$. Plugging in the value of $\bs R$ to the expression above results in the displayed equation in the theorem statement. 
Equality of $\mathds K_c$, $\mathds M_c$ and $\mathds J_c$ follows by Theorem~\ref{thm:cartan-tightness}(e).
\end{proof}
When $\tilde\varsigma(s)=\exp(-s/2)$, the reference measure $\rho$ is the
standard Gaussian,
Corollary~\ref{cor:elliptical-ot} then recovers the classical result of
\cite{DowsonLandau1982,OlkinPukelsheim1982}; the extension to
general elliptical families is due to \cite{Gelbrich1990}. The orbit perspective makes transparent that both results share the
same algebraic source: the Cartan involution
$\theta(\bs A)=\bs A^{-\top}$ on $\GL(d)$.

\subsection{The congruence mechanism on $\mathbb S_\succ^d$}
\label{subsec:congruence}
Let the state space be
$\mc X=\mathbb S_\succ^d$,
the cone of $d\times d$ symmetric positive definite matrices,
and let $G=\mathrm{GL}(d)$ act on $\mc X$ by congruence: $
  \alpha(\bs A)(\bs X)=\bs A\bs X\bs A^\top$, $\bs A\in\mathrm{GL}(d)$.
The action $\alpha$ is induced by a linear representation on the Hilbert space
$\mc H=\mathrm{Sym}(d)$ equipped with the Frobenius inner product
$\langle\bs X,\bs Y\rangle_{\mc H}=\Tr(\bs X\bs Y)$,
with $b\equiv 0$ and $\pi=\alpha$.
Thus
Assumption~\ref{ass:affine-induced-pi} holds.

\paragraph{Cartan structure} The Cartan structure is carried by the image $L=\pi(\GL(d)) = \{\pi(\bs A): \bs A \in \GL(d)\}$.
The Cartan involution $\bs A \mapsto \bs A^{-\top}$ on
$\mathrm{GL}(d)$ induces a Cartan involution on $L$ via $\theta(\pi(\bs A)) = \pi(\bs A^{-\top})$.
The map is well-defined because the only ambiguity in representing an
element of $L=\pi(\GL(d))$ is sign: if $\pi(\bs A)=\pi(\bs B)$, then
$\bs B=\pm\bs A$ by \cite[Proposition 4~(ii)]{ref:gowda_sznajder_tao_2013}. Hence $
  \pi(\bs B^{-\top})=\pi(\bs A^{-\top})$.
Then, the fixed-point subgroup of $L$ is $L^\theta = \{\pi(\bs Q) : \bs Q \in \mc O(d)\}$.  
Indeed, if $\bs Q\in \mc O(d)$, then $\bs Q^{-\top}=\bs Q$, so
$
  \theta(\pi(\bs Q))=\pi(\bs Q^{-\top})=\pi(\bs Q)$.
Conversely, if $\pi(\bs A)\in L^\theta$, then $
  \pi(\bs A)=\theta(\pi(\bs A))=\pi(\bs A^{-\top})$.
Evaluating both operators, $\pi(\bs A)$ and $\pi(\bs A^{-\top})$, at $\bs I_d$ gives $
  \bs A\bs A^\top=\bs A^{-\top}\bs A^{-1}=(\bs A\bs A^\top)^{-1}$.
Hence $(\bs A\bs A^\top)^2=\bs I_d$. Since $\bs A\bs A^\top$ is symmetric positive definite,
all its eigenvalues are positive and satisfy $\lambda^2=1$, hence
$\lambda=1$. Therefore $\bs A\bs A^\top=\bs I_d$, so $\bs A\in \mc O(d)$.
The Lie algebra of $L$ consists of operators
\[
  \mathcal D_{\bs H}:\Sym(d)\to\Sym(d),
  \qquad
  \mathcal D_{\bs H}(\bs X)=\bs H\bs X+\bs X\bs H^\top,
  \qquad
  \bs H\in\mathfrak{gl}(d),
\]
where $ \mathfrak{gl}(d)
  =
  \mathbb R^{d\times d}$.
Indeed, if $
  A(t)=\bs I_d+t\bs H+o(t)$,
then $
  \pi(A(t))(\bs X)
  =
  A(t)\bs XA(t)^\top
  =
  \bs X+t(\bs H\bs X+\bs X\bs H^\top)+o(t)$.
Thus $
  \frac{\diff }{\diff t}|_{t=0}\pi(A(t))=\mathcal D_{\bs H}$ and $\Lie(L) = \{\mc D_{\bs H} : \bs H \in \mathfrak{gl}(d)\}$.
Since the Cartan involution on $L$ is given by $
  \theta(\pi(\bs A))=\pi(\bs A^{-\top})$,
its differential satisfies $
  \diff \theta_e(\mathcal D_{\bs H})=\mathcal D_{-{\bs H}^\top}$.
Now write
\[
  \bs H=\bs S+\bs K,
  \qquad
 \bs  S=\frac{\bs H+\bs H^\top}{2}\in\Sym(d),
  \qquad
  \bs K=\frac{\bs H-\bs H^\top}{2},
  \quad \bs K^\top=-\bs K.
\]
Then $
  \mathcal D_{\bs H}=\mathcal D_{\bs S}+\mathcal D_{\bs K}$,
and $
  \diff\theta_e(\mathcal D_{\bs S})=\mathcal D_{-\bs S}=-\mathcal D_{\bs S},~\diff \theta_e(\mathcal D_{\bs K})=\mathcal D_{\bs K}$.
Hence the $(-1)$-eigenspace is $
  \mathfrak p
  =
  \{\mathcal D_{\bs S}:\bs S\in\Sym(d)\}$. 

The polar decomposition gives a diffeomorphism $
  \mc O(d)\times\Sym(d)\to\GL(d)$,~$(\bs Q,\bs S)\mapsto \bs Q\exp(\bs S)$. For $\bs S\in\Sym(d)$, compatibility of Lie group homomorphisms with
exponential maps gives $
  \exp(\mc D_{\bs S})=\pi(\exp(\bs S))$.
For the congruence representation, the only ambiguity is the sign: $
  \pi(\bs A)=\pi(\bs B)$ if and only if $
  \bs B=\pm \bs A$.
If $
  \bs A=\bs Q\exp(\bs S)$
is the polar decomposition of $\bs A$, then the polar decomposition of
$-\bs A$ is $
  -\bs A=(-\bs Q)\exp(\bs S)$.
Thus the quantities $\pi(\bs Q)$ and $\mc D_{\bs S}$ do not depend
on the choice of representative $\bs A$ of $\pi(\bs A)$. Therefore
the polar decomposition induces a well-defined and smooth bijection $
  L^\theta\times\mathfrak p\to L$,~$(\pi(\bs Q),\mc D_{\bs S})
  \mapsto
  \pi(\bs Q)\exp(\mc D_{\bs S})$.
Its inverse is also smooth because it is induced by the smooth inverse of
the polar decomposition on $\GL(d)$. Hence this map is a
diffeomorphism, and $(L,\theta)$ admits a global Cartan decomposition
in the sense of \Cref{def:linear-cartan-decomposition}.

Observe that every element of $\mathfrak p$ is self-adjoint on $\mc H$. Indeed, for $\bs S, \bs X, \bs Y \in \Sym(d)$, we have: $\langle \mc D_{\bs S} \bs X , \bs Y\rangle_{\mc H} = \Tr((\bs S \bs X +\bs X \bs S)\bs Y) = \Tr(\bs X(\bs Y \bs S + \bs S \bs Y)) = \langle \bs X, \mc D_{\bs S} \bs Y\rangle_{\mc H}$.
Hence, hypothesis \ref{thm:hyp:cartan-self-adj} of \Cref{thm:cartan-tightness} holds.

Let $\mathcal L_{\Sym(d)}$ denote the Euclidean volume measure on the
finite-dimensional vector space $\Sym(d)$ induced by the Frobenius
inner product; equivalently, it is the
$d(d+1)/2$-dimensional Hausdorff measure induced by the Frobenius norm.
All densities on $\mathbb S_\succ^d$ in this subsection are taken
with respect to the restriction $
  \mathcal L_{\Sym(d)}|_{\mathbb S_\succ^d}$.
\paragraph{Reference distributions}

It remains to identify reference laws $\rho$ on $\mathbb S_\succ^d$
for which the compact invariance
condition~\ref{thm:hyp:reference-inv} $L^\theta \subseteq \pi(\Stab_{\GL(d)}(\rho))$ holds.
Since $L^\theta = \pi(\mc O(d))$, this condition is guaranteed by $\mc O(d) \subseteq \Stab_{\GL(d)}(\rho)$. 
Because the congruence action of $\bs Q\in\mc O(d)$ preserves
the eigenvalues of $\bs X$, any distribution whose density
depends on $\bs X$ only through its eigenvalues is
$\mc O(d)$-invariant.
We call such a distribution \emph{spectrally invariant}.
Before stating the main result, we establish the second-moment
structure that governs the optimal transport value for every spectrally
invariant reference.
\begin{lemma}[Second-moment tensor of a spectral law]
\label{lem:second-moment-tensor}
Let $d\ge 2$ and let
$\rho\in\mc P(\mathbb S_\succ^d)$ be spectrally invariant,
i.e., $\alpha(\bs Q)_\#\rho=\rho$ for every
$\bs Q\in\mc O(d)$.
Assume moreover that $
  \int_{\mathbb S_\succ^d}
  \|\bs X\|_{\mathrm F}^2\diff\rho(\bs X)<\infty$.
Then there exist constants
$a_\rho,b_\rho\in\R$ depending on $\rho$ such that
\begin{equation}\label{eq:second-moment-ansatz}
  \EE_{\bs X \sim \rho}[X_{ij}X_{kl}]
  =a_\rho\delta_{ij}\delta_{kl}
  +b_\rho\bigl(
\delta_{ik}\delta_{jl}+\delta_{il}\delta_{jk}\bigr)
\end{equation}
for all $i,j,k,l\in\{1,\ldots,d\}$.
Consequently, for every
$\bs U,\bs V\in\R^{d\times d}$,
\begin{equation}\label{eq:master-trace}
  \EE_{\bs X \sim \rho}[\Tr(\bs U\bs X\bs V\bs X)]
  =a_\rho\Tr(\bs U\bs V)
  +b_\rho\bigl(\Tr(\bs U\bs V^\top)
  +\Tr(\bs U)\Tr(\bs V)\bigr).
\end{equation}
The constants $a_\rho$ and $b_\rho$ are determined by
the moments
$\EE_{\bs X \sim \rho}[(\Tr(\bs X))^2]$ and
$\EE_{\bs X \sim \rho}[\Tr(\bs X^2)]$ via
\begin{equation}\label{eq:alpha-beta-from-traces}
  b_\rho
  =\frac{\EE_{\bs X \sim \rho}[\Tr(\bs X^2)]
         -\frac{1}{d}\EE_{\bs X \sim \rho}[(\Tr(\bs X))^2]}{d^2+d-2},
  \qquad
  a_\rho
  =\frac{\EE_{\bs X \sim \rho}[(\Tr(\bs X))^2]-2b_\rho d}{d^2}.
\end{equation}
For $d=1$, the pair $(a_\rho,b_\rho)$ is not uniquely
identified; only the combination
$a_\rho+2b_\rho=\EE_{X \sim \rho}[X^2]$ is determined and $\EE_{X \sim \rho}[\Tr(UX VX)]  = (a_\rho + 2b_\rho) UV$. 
\end{lemma}

\begin{proof}
Let $\bs X \sim \rho$. Since $\rho$ is invariant under orthogonal congruence, we have
$\bs Q \bs X\bs Q^\top\sim \bs X$ for every $\bs Q\in\mc O(d)$. Hence, for every $i,j,k,l\in\{1,\dots,d\}$,
\[
T_{ijkl}
=
\EE_{\bs X \sim \rho}[X_{ij}X_{kl}]
=
\EE_{\bs X \sim \rho}[(\bs Q\bs X\bs Q^\top)_{ij}(\bs Q\bs X\bs Q^\top)_{kl}].
\]
Using $
(\bs Q\bs X\bs Q^\top)_{ij}
=
\sum_{i',j'=1}^d Q_{ii'}X_{i'j'}Q_{jj'}$,
we obtain $T_{ijkl}
=
\sum_{i',j',k',l'=1}^d
Q_{ii'}Q_{jj'}Q_{kk'}Q_{ll'}$ $ T_{i'j'k'l'}$.

In other words, the tensor
$T_{ijkl}=\EE_{\bs X \sim \rho}[X_{ij}X_{kl}]$
is invariant under the simultaneous congruence
$T\mapsto
\bs Q^{\otimes 4}T$ for every $\bs Q\in\mc O(d)$.
By the First Fundamental Theorem for the orthogonal
group, every $O(d)$-invariant rank-four tensor is a linear combination of
the three pairings $
\delta_{ij}\delta_{kl},~
\delta_{ik}\delta_{jl},~
\delta_{il}\delta_{jk}$.
Hence, for some scalars $ A,B,C$, $
T_{ijkl}
=
A \delta_{ij}\delta_{kl}
+
B \delta_{ik}\delta_{jl}
+
C \delta_{il}\delta_{jk}$.
Since $\bs X$ is symmetric, we have $T_{ijkl}=T_{jikl}=T_{ijlk}$, which forces
$B=C$. Writing $a_\rho=A$ and $b_\rho=B$ yields $
T_{ijkl}
=
a_\rho \delta_{ij}\delta_{kl}
+
b_\rho(\delta_{ik}\delta_{jl}+\delta_{il}\delta_{jk})$.
For the identity~\eqref{eq:master-trace},
note that
$\Tr(\bs U\bs X\bs V\bs X)
=\sum_{i,j,k,l }U_{ij}V_{kl }X_{jk}X_{l  i}$.
Substituting~\eqref{eq:second-moment-ansatz} for
$\EE_{\bs X \sim \rho}[X_{jk}X_{l i}]$:
the term $a_\rho\delta_{jk}\delta_{l i}$ contributes
$a_\rho\sum_{i,j}U_{ij}V_{ji}=a_\rho\Tr(\bs U\bs V)$;
the term
$b_\rho\delta_{j l}\delta_{ki}$ contributes
$b_\rho\sum_{j,k}U_{kj}V_{kj}
=b_\rho\Tr(\bs U\bs V^\top)$;
and the term
$b_\rho\delta_{ji}\delta_{k l}$ contributes
$b_\rho = \sum_{i=1}^d U_{ii} \sum_{k=1}^d $ $V_{kk}$ $ =  b_\rho\Tr(\bs U)\Tr(\bs V)$.

Finally,
evaluating~\eqref{eq:second-moment-ansatz}
with $i=j,k=l$ gives
$\EE_{\bs X \sim \rho}[(\Tr(\bs X))^2]=a_\rho d^2+2b_\rho d$,
and with $i=k,j=l$ gives
$\EE_{\bs X \sim \rho}[\Tr(\bs X^2)]=a_\rho d+b_\rho d(d+1)$.
Solving this $2\times 2$ system
yields~\eqref{eq:alpha-beta-from-traces}.
\end{proof}
\begin{lemma}\label{lem:positive-congruence-operator}
Let $\bs A\in\GL(d)$ and define $
  \pi(\bs A):\Sym(d)\to\Sym(d)$,~
  $\pi(\bs A)\bs X=\bs A\bs X\bs A^\top$.
If $\pi(\bs A)$ is self-adjoint and positive definite on $\Sym(d)$
with respect to the Frobenius inner product, then $\bs A \in \Sym(d)$ and $\bs A \succ 0$ or $\bs A \prec 0$.
\end{lemma}
\begin{proof}
For $\bs X,\bs Y\in\Sym(d)$, $
  \langle \pi(\bs A)\bs X,\bs Y\rangle
  =
  \Tr(\bs A\bs X\bs A^\top\bs Y)
  =
  \Tr(\bs X\bs A^\top\bs Y\bs A)
  =$
 $ \langle \bs X,$ $\pi(\bs A^\top)\bs Y\rangle$.
Thus $
  \pi(\bs A)^*=\pi(\bs A^\top)$.
Since $\pi(\bs A)$ is self-adjoint, $
  \pi(\bs A)=\pi(\bs A^\top)$.
By \cite[Proposition 4~(ii)]{ref:gowda_sznajder_tao_2013} if $
  \bs A\bs X\bs A^\top=\bs B\bs X\bs B^\top$
  $\forall \bs X\in\Sym(d)$,
then $\bs B=\pm \bs A$.
Applying this with $\bs B=\bs A^\top$, we obtain $
  \bs A^\top=\pm\bs A$.
Note that the skew-symmetric case $\bs A^\top=-\bs A$ is impossible. Indeed, for
any nonzero $\bs x\in\R^d$, set $\bs X=\bs x\bs x^\top$. Then
$\bs X\in\Sym(d)$ and $
  \langle \pi(\bs A)\bs X,\bs X\rangle
  =
  \Tr(\bs A\bs X\bs A^\top\bs X)
  =
  (\bs x^\top\bs A\bs x)^2
  =
  0$,
contradicting positive definiteness of $\pi(\bs A)$.
Thus $\bs A=\bs A^\top$.

Now diagonalize $
  \bs A=\bs U\mathrm{diag}(\lambda_1,\ldots,\lambda_d)\bs U^\top$.
If $d=1$, then $\lambda_1\neq0$, so $\bs A$ is either positive or
negative definite. If $d\ge2$, then for $i\neq j$ set $
  \bs X_{ij}
  =
  \bs U(\bs e_i\bs e_j^\top+\bs e_j\bs e_i^\top)\bs U^\top$.
Positive definiteness gives $
  0
  <
  \langle \pi(\bs A)\bs X_{ij},\bs X_{ij}\rangle
  =
  2\lambda_i\lambda_j$.
Therefore $\lambda_i\lambda_j>0$ for all $i\neq j$, so all eigenvalues
of $\bs A$ have the same sign. Hence either $\bs A\succ0$ or
$\bs A\prec0$.
\end{proof}

\begin{theorem}[Congruence template on $\mathbb S_\succ^d$]
\label{thm:congruence-template}
Let $\rho\in\mc P(\mathbb S_\succ^d)$ be absolutely continuous,
spectrally invariant, and have finite second moment with
respect to the Frobenius norm.
Let $a_\rho$ and $b_\rho$ be as in \eqref{eq:alpha-beta-from-traces}.
For $\bs\Sigma_i\in\mathbb S_\succ^d$, $i=0,1$, define
$\mu_i=\alpha(\bs\Sigma_i^\hf)_\#\rho$ and set
$\bs C =(\bs\Sigma_0^\hf\bs\Sigma_1\bs\Sigma_0^\hf)^{\hf}$.
Then for the quadratic cost
$c(\bs X,\bs Y)=\|\bs X-\bs Y\|_{\mathrm F}^2$:
\begin{enumerate}[label=\textnormal{(\alph*)}]
  \item the unique optimal Monge map from $\mu_0$ to $\mu_1$ is
  \begin{equation}\label{eq:congruence-map}
    T\opt(\bs X)=\bs A\opt\bs X\bs A\opt,
    \qquad
    \bs A\opt
    =\bs\Sigma_0^{-\hf}
    \bigl(\bs\Sigma_0^\hf\bs\Sigma_1\bs\Sigma_0^\hf\bigr)^{\hf}
    \bs\Sigma_0^{-\hf},
  \end{equation}
  where $\bs A\opt$ is symmetric positive definite;
  \item $(\mathrm{id},T\opt)_\#\mu_0$ is the unique
  optimal Kantorovich plan,
  \item the optimal transport cost is
  \begin{equation}\label{eq:congruence-value}
    \mathds K_c(\mu_0,\mu_1)
    =(a_\rho+b_\rho)
     \|\bs\Sigma_0-\bs\Sigma_1\|_{\mathrm F}^2
    +b_\rho\bigl(
      (\Tr(\bs\Sigma_0))^2+(\Tr(\bs\Sigma_1))^2
      -2(\Tr\bs C)^2\bigr),
  \end{equation}
  where $a_\rho$ and $b_\rho$ are as defined in \eqref{eq:alpha-beta-from-traces}. For \(d=1\), \(a_\rho\) and \(b_\rho\) are not separately identified,
but the two quadratic terms in \eqref{eq:congruence-value} coincide.
Thus the value depends only on the identifiable combination
\(a_\rho+2b_\rho=\mathbb E_{ X \sim \rho}[X^2]\).
  \item $\mathds M_c(\mu_0,\mu_1)
        =\mathds K_c(\mu_0,\mu_1)
        =\mathds J_c(\bs\Sigma_0^\hf,\bs\Sigma_1^\hf)$.
\end{enumerate}
\end{theorem}
\begin{proof}
Set $g_i=\bs\Sigma_i^{1/2}$, $i=0,1$, so that $\mu_i = \alpha(g_i)_\#\rho$. 
Both hypotheses of Theorem~\ref{thm:cartan-tightness} have been
verified above: $\mc D_{\bs S}$ is self-adjoint on $\Sym(d)$ for every
$\bs S\in\Sym(d)$, and
$\mc O(d)\subseteq\Stab_{\GL(d)}(\rho)$ by the spectral invariance of
$\rho$.

By \Cref{thm:cartan-tightness}, there exists
$h\in\Stab_{\GL(d)}(\rho)$, with $\pi(h)\in L^\theta$, such that $
  \pi(g_1hg_0^{-1})$
is self-adjoint and positive definite on $\mc H=\Sym(d)$. Since
$L^\theta=\pi(\mc O(d))$, we may choose
$\bs Q\opt\in\mc O(d)$ such that $
  \pi(h)=\pi(\bs Q\opt)$.
 Then, we have $
  \pi(g_1hg_0^{-1})
  =
  \pi(\bs\Sigma_1^{1/2}\bs Q\opt\bs\Sigma_0^{-1/2})$.
Set $
  \bs A_h=\bs\Sigma_1^{1/2}\bs Q\opt\bs\Sigma_0^{-1/2}$.
Then $\pi(\bs A_h)$ is self-adjoint and positive definite on
$\Sym(d)$, and the induced congruence map is the unique optimal Monge
map from $\mu_0$ to $\mu_1$ by
\Cref{thm:cartan-tightness}(b).

\medskip\noindent\emph{Identification of $\bs A_h$.}
By Lemma~\ref{lem:positive-congruence-operator}, the operator
$\pi(\bs A_h)$ admits a symmetric positive definite representative.
Since $\pi(\bs A_h)=\pi(-\bs A_h)$, replacing $\bs A_h$ by
$-\bs A_h$ if necessary, equivalently replacing $\bs Q\opt$ by
$-\bs Q\opt$, does not change the transport map. We may therefore
assume $
  \bs A_h=\bs A_h^\top\succ0$.
Set $
  \bs R
  =
  \bs\Sigma_0^{\hf}\bs A_h\bs\Sigma_0^{\hf}
  =
  \bs\Sigma_0^{\hf}\bs\Sigma_1^{\hf}\bs Q\opt$.
Then $
  \bs R=\bs R^\top\succ0$.
Since $\bs Q\opt(\bs Q\opt)^\top=\bs I_d$, we have
$
  \bs R^2
  =
  \bs R\bs R^\top
  =
  \bs\Sigma_0^{\hf}\bs\Sigma_1^{\hf}
  \bs Q\opt(\bs Q\opt)^\top
  \bs\Sigma_1^{\hf}\bs\Sigma_0^{\hf}
  =
  \bs\Sigma_0^{\hf}\bs\Sigma_1\bs\Sigma_0^{\hf}$.
Therefore $\bs R$ is the unique positive definite square root of
$\bs\Sigma_0^{\hf}\bs\Sigma_1\bs\Sigma_0^{\hf}$. Hence $ \bs R
  =
  \bs C=
  \bigl(\bs\Sigma_0^{\hf}\bs\Sigma_1\bs\Sigma_0^{\hf}\bigr)^{\hf}$.
Consequently, $
  \bs A_h
  =
  \bs\Sigma_0^{-\hf}\bs C\bs\Sigma_0^{-\hf}
  =
  \bs A\opt$.
Since the possible sign change does not affect the congruence operator,
we conclude in all cases that $
  T_h(\bs X)
  =
  \bs A\opt\bs X\bs A\opt
  =
  T\opt(\bs X)$.

\medskip\noindent\emph{Transport value.}
By Theorem~\ref{thm:cartan-tightness}, the optimal value equals
\begin{equation}\label{eq:congruence-value-raw}
  \mathds K_c(\mu_0,\mu_1)
  =\EE_{\bs X\sim \rho}\bigl[\|\bs\Sigma_0^\hf\bs X\bs\Sigma_0^\hf
   -\bs\Sigma_1^\hf\bs Q\opt\bs X\bs Q\opt{}^\top
   \bs\Sigma_1^\hf\|_{\mathrm F}^2\bigr].
\end{equation}
Expanding the squared Frobenius norm yields:
\begin{align*}
  \mathds K_c(\mu_0,\mu_1)
  &=
  \EE_{\bs X \sim \rho}\left[
    \Tr\left(
      (\bs\Sigma_0^\hf \bs X \bs\Sigma_0^\hf)^2
    \right)
  \right]
  +
  \EE_{\bs X \sim \rho}\left[
    \Tr\left(
      (\bs\Sigma_1^\hf \bs Q\opt \bs X \bs Q\opt{}^\top \bs\Sigma_1^\hf)^2
    \right)
  \right] \\
  &\qquad
  -2 
  \EE_{\bs X \sim \rho}\left[
    \Tr\left(
      \bs\Sigma_0^\hf \bs X \bs\Sigma_0^\hf
      \bs\Sigma_1^\hf \bs Q\opt \bs X \bs Q\opt{}^\top \bs\Sigma_1^\hf
    \right)
  \right].
\end{align*}
Since $\bs Q\opt \bs X \bs Q\opt{}^\top \sim \bs X$, the second term equals $
\EE_{\bs X \sim \rho}[\Tr(\bs\Sigma_1 \bs X \bs\Sigma_1 \bs X)]$.
By cyclicity of the trace, the cross term can be written as $
\EE_\rho[\Tr(\bs M \bs Q\opt \bs X \bs Q\opt{}^\top \bs M^\top \bs X)]$, where $\bs M = \bs \Sigma_0^{\hf} \bs \Sigma_1^\hf$.
Hence,
\begin{align}
\label{eq:mid-cong-ot-closed-form}
  \mathds K_c(\mu_0,\mu_1)\nonumber
  &=
  \EE_{\bs X\sim \rho}[\Tr(\bs\Sigma_0 \bs X \bs\Sigma_0 \bs X)]
  +
  \EE_{\bs X\sim \rho}[\Tr(\bs\Sigma_1 \bs X \bs\Sigma_1 \bs X)] \\
  &\qquad
  -2 \EE_{\bs X\sim \rho}[\Tr(\bs M \bs Q\opt \bs X \bs Q\opt{}^\top \bs M^\top \bs X)].
\end{align}
Each of the three terms above is of the form $\EE_{\bs X\sim \rho}[\Tr(\bs U\bs X\bs V\bs X)]$ for some $\bs U, \bs V \in \R^{d\times d}$, applying Lemma~\ref{lem:second-moment-tensor} to each term yields:
\begin{itemize}
\item $\bs U=\bs V=\bs\Sigma_i$ (symmetric):
$\EE_{\bs X\sim \rho}[\Tr(\bs\Sigma_i\bs X\bs\Sigma_i\bs X)]
=(a_\rho+b_\rho)\Tr(\bs\Sigma_i^2)
+b_\rho(\Tr\bs\Sigma_i)^2$,
\item $\bs U=\bs M\bs Q\opt$,
$\bs V=\bs Q\opt{}^\top\bs M^\top$:
Then $
  \Tr(\bs U\bs V)
  =
  \Tr(\bs M\bs M^\top)
  =
  \Tr(\bs\Sigma_0\bs\Sigma_1)$.
Moreover, since $\bs M\bs Q\opt=\bs C$, $
  \Tr(\bs U\bs V^\top)
  =
  \Tr((\bs M\bs Q\opt)^2)
  =
  \Tr(\bs C^2)
  =
  \Tr(\bs\Sigma_0\bs\Sigma_1)$,
and $
  \Tr(\bs U)\Tr(\bs V)
  =
  (\Tr\bs C)^2$.
Therefore $
  \EE_{\bs X \sim \rho}[
  \Tr(\bs M\bs Q\opt\bs X\bs Q\opt{}^\top\bs M^\top\bs X)]
  =
  (a_\rho+b_\rho)\Tr(\bs\Sigma_0$ $\bs\Sigma_1)
  $ $+
  b_\rho(\Tr\bs C)^2$.
\end{itemize}
Substituting into~\eqref{eq:mid-cong-ot-closed-form}:
$
  \mathds K_c(\mu_0,\mu_1)
 \!=\!(a_\rho\!+\!b_\rho)
    (\Tr(\bs\Sigma_0^2)+\Tr(\bs\Sigma_1^2)
    \!-\!2\Tr(\bs\Sigma_0\bs\Sigma_1))+b_\rho(
    (\Tr(\bs\Sigma_0))^2\!+\!(\Tr(\bs\Sigma_1))^2
    -2(\Tr(\bs C))^2)$.
Since
$\Tr(\bs\Sigma_0^2)+\Tr(\bs\Sigma_1^2)
-2\Tr(\bs\Sigma_0\bs\Sigma_1)
=\|\bs\Sigma_0-\bs\Sigma_1\|_{\mathrm F}^2$,
the closed-form solution coincides with~\eqref{eq:congruence-value}.
The equality $\mathds M_c=\mathds K_c$ follows from
Theorem~\ref{thm:cartan-tightness}(e).
\end{proof}
Note that the optimal map~\eqref{eq:congruence-map} is independent of
the choice of spectrally invariant reference $\rho$; only the
transport value~\eqref{eq:congruence-value} depends on $\rho$,
through the pair $(a_\rho,b_\rho)$.

We next present three families of spectrally invariant reference
measures, each yielding a closed-form optimal transport value through
\eqref{eq:congruence-value}. Table~\ref{tab:congruence-families}
summarizes these families and the corresponding coefficients. We derive
the Wishart family as a representative case and relegate the analogous
derivations for the other families to
\Cref{app:cong-examples-additional}.






\begin{table}[ht]
\centering
\footnotesize
\renewcommand{\arraystretch}{1.6}
\caption{
\color{black}
Spectrally invariant families on $\mathbb S_\succ^d$
under the congruence action,
their densities, reference laws, and second-moment constants
entering the transport value~\eqref{eq:congruence-value}.}
\label{tab:congruence-families}
\smallskip
\color{black}
\begin{tabular}{@{}llccc@{}}
\toprule
\textbf{Family}
  & \textbf{Density} ($\propto$)
  & ${\rho}$
  & $a_\rho$
  & $b_\rho$ \\
\midrule
\multicolumn{5}{@{}l}{\textit{\textbf{Wishart}}
  ($\bs\Sigma\in\mathbb S_\succ^d$, $p> d-1$)} \\[2pt]
$\mc W_d(\bs\Sigma,p)$
  & $\det(\bs X)^{(p-d-1)/2}
     \exp({-\frac{1}{2}\Tr(\bs\Sigma^{-1}\bs X)})$
  & $\mc W_d(\bs I_d,p)$
  & $p^2$
  & $p$ \\
\hline
\multicolumn{5}{@{}l}{\textit{\textbf{Inverse-Wishart}}
  ($\bs\Psi\in\mathbb S_\succ^d$, $p>d{+}3$,
  $m=p{-}d$)} \\[2pt]
$\mc{IW}_d(\bs\Psi,p)$
  & $\det(\bs X)^{-(p+d+1)/2}
     \exp({-\frac{1}{2}\Tr(\bs\Psi\bs X^{-1})})$
  & $\mc{IW}_d(\bs I_d,p)$
  & {\tiny{$\dfrac{m{-}2}{m(m{-}1)(m{-}3)}$}}
  & {\tiny$\dfrac{1}{m(m{-}1)(m{-}3)}$} \\
\hline
\multicolumn{5}{@{}l}{\textit{\textbf{Matrix beta type~II}}
  ($\bs\Sigma\in\mathbb S_\succ^d$,
  $q_1>(d{-}1)/2$, $2q_2>d{+}3$,
  $q=2q_2{-}d$)} \\[2pt]
$\mc{MB}_d^{\mathrm{II}}(q_1,q_2,\bs\Sigma)$
  & $\det(\bs X)^{q_1-(d+1)/2}
     \det(\bs\Sigma{+}\bs X)^{-(q_1+q_2)}$
  & $\mc{MB}_d^{\mathrm{II}}(q_1,q_2,\bs I_d)$
  & {\tiny$\dfrac{4q_1(q_1(q{-}2){+}1)}{q(q{-}1)(q{-}3)}$}
  & {\tiny$\dfrac{2q_1(2q_1{+}q{-}1)}{q(q{-}1)(q{-}3)}$} \\
\bottomrule
\end{tabular}
\end{table}


\subsection*{Wishart distributions}
The Wishart distribution $\mc W_d(\bs\Sigma,p)$ with
scale matrix $\bs\Sigma\in\mathbb S_\succ^d$ and
degrees of freedom $p >  d-1$ is a probability law on $\mathbb S_{\succ }^d$ with density
\begin{equation}\label{eq:wishart-density}
 \bs X \mapsto \frac{\det(\bs X)^{\frac{(p-d-1)}{2}}
  \exp\bigl(-\tfrac{1}{2}\Tr(\bs\Sigma^{-1}\bs X)\bigr)}{2^{\frac{pd}{2}} \det(\bs\Sigma)^{\frac{p}{2}} \mathsf \Gamma_d(\frac{p}{2})}
\end{equation}
with respect to $\mc L_{\Sym(d)}|_{\mathbb S_\succ^d}$.
When $d=1$, this reduces to a scaled $\chi^2$ distribution with $p$
degrees of freedom and scale parameter $\Sigma>0$.

\begin{lemma}[Wishart orbit, stabilizer, and second moments]
\label{lem:wishart}
Let $\rho=\mc W_d(\bs I_d,p)$ with $p >  d-1$. 
Then:
\begin{enumerate}[label=\textnormal{(\roman*)}]
  \item $\alpha(\bs A)_\#\rho=\mc W_d(\bs A\bs A^\top,p)$
  for every $\bs A\in\mathrm{GL}(d)$, 
  \item $\Stab_{\mathrm{GL}(d)}(\rho)=\mc O(d)$,
  \item for $d\ge2$, the constants in
  Lemma~\ref{lem:second-moment-tensor} are $
  a_\rho=p^2$,
  $b_\rho=p$.
  For $d=1$, the identifiable combination is $
  a_\rho+2b_\rho=p(p+2)$.
\end{enumerate}
\end{lemma}

\begin{proof}
(i) This is the standard transformation property of the Wishart
distribution; see, for example,
\cite[Proposition~8.1]{ref:eaton1983multivariate} or
\cite[Theorem~3.2.5]{ref:muirhead2009aspects}.
{(ii)} We next identify the stabilizer. If $\bs Q\in\mc O(d)$, then by (i), we have $
\alpha(\bs Q)_\#\rho
=
\mc W_d(\bs I_d,p)
=
\rho$.
Thus $
\mc O(d)\subseteq \Stab_{\GL(d)}(\rho)$.
Conversely, suppose $\bs A\in\Stab_{\GL(d)}(\rho)$. Then $
\mc W_d(\bs A\bs A^\top,p)
=
\alpha(\bs A)_\#\rho
=
\rho
=
\mc W_d(\bs I_d,p)$.
Then, the equality of the two densities implies $
\det(\bs A\bs A^\top)^{-p/2}
\exp (
-\tfrac12\Tr(((\bs A\bs A^\top)^{-1}-\bs I_d)\bs Y)
)
=
1$
for almost every $\bs Y\in\mathbb S_{\succ}^d$. Since both sides are
continuous in $\bs Y$, the identity holds for all
$\bs Y\in\mathbb S_{\succ}^d$. The exponent is a linear function of $\bs Y$. Since the displayed identity
holds for every $\bs Y\in\mathbb S_{\succ}^d$, this linear function is
constant on the open cone $\mathbb S_{\succ}^d$. A linear function that is
constant on a nonempty open set must vanish identically; so $
(\bs A\bs A^\top)^{-1}-\bs I_d=0$.
Thus $\bs A\bs A^\top=\bs I_d$, and hence $\bs A\in\mc O(d)$.
This proves $
\Stab_{\GL(d)}(\rho)=\mc O(d)$.
{(iii)} It remains to compute the constants $a_\rho$ and $b_\rho$.
We compute them from the two scalar moments $
M_1=\EE_{\bs X \sim \rho}[(\Tr(\bs X))^2]$ and 
$M_2=\EE_{\bs X \sim \rho}[\Tr(\bs X^2)]$.

Let $
Y_{\bs H}= \Tr(\bs H\bs X)$,
$\bs H\in\Sym(d)$,
where $\bs X\sim \rho=\mc W_d(\bs I_d,p)$.
For $t$ in a neighborhood of $0$, the Laplace transform of $Y_{\bs H}$ is $
L_{\bs H}(t) = 
\EE_{\bs X \sim \rho} \left[\exp({-tY_{\bs H}})\right]
=
\EE_{\bs X \sim \rho} \left[\exp({-t\Tr(\bs H\bs X)})\right]
=
\det(\bs I_d+2t\bs H)^{-p/2}$,
where the last equality follows from the characteristic function formula for the Wishart distribution in \cite[Theorem~3.2.3]{ref:muirhead2009aspects} by evaluating it at the imaginary argument $it \bs H$.
Since $L_{\bs H}(t)=\EE_{\bs X \sim \rho}[\exp({-tY_{\bs H}})]$, differentiation at $t=0$ gives $
L_{\bs H}'(0)=-\EE_{\bs X \sim \rho}[Y_{\bs H}]$ and 
$L_{\bs H}''(0)=\EE_{\bs X \sim \rho}[Y_{\bs H}^2]$.
Equivalently, $
L_{\bs H}'(0)=-\EE_{\bs X \sim \rho}[\Tr(\bs H\bs X)]$ and 
$L_{\bs H}''(0)=\EE_{\bs X \sim \rho}[(\Tr(\bs H\bs X))^2]$.

To compute these quantities, we differentiate $\log (L_{\bs H}(t))=
-\frac p2 \Tr (\log(\bs I_d+2t\bs H))$.
Hence $
(\log(L_{\bs H}))'(t)
=
-p\Tr \bigl((\bs I_d+2t\bs H)^{-1}\bs H\bigr)$,
and
\[
(\log(L_{\bs H}))''(t)
=
2p\Tr \bigl((\bs I_d+2t\bs H)^{-1}\bs H
(\bs I_d+2t\bs H)^{-1}\bs H\bigr).
\]
Evaluating the first and second derivatives of $\log(L_{\bs H})$ at $t=0$ yields $
(\log(L_{\bs H}))'(0)=-p\Tr(\bs H)$ and 
$(\log(L_{\bs H}))''(0)=2p\Tr(\bs H^2)$.

Now we use the identity: $(\log(L_{\bs H}))''(0)
=
{L_{\bs H}''(0)}/{L_{\bs H}(0)}
-
({L_{\bs H}'(0)}/{L_{\bs H}(0)})^2$. 
Since $L_{\bs H}(0)=1$, we have $
L_{\bs H}''(0)
=
(\log L_{\bs H})''(0)
+((\log L_{\bs H})'(0))^2$.
Therefore
\begin{equation}
\EE_{\bs X \sim \rho}[(\Tr(\bs H\bs X))^2]
=
2p\Tr(\bs H^2)+p^2(\Tr(\bs H))^2.
\label{eq:second-moment-tensor}
\end{equation}
Taking $\bs H=\bs I_d$ gives $
M_1 = 
\EE_{\bs X \sim \rho}[(\Tr(\bs X))^2]
=
2p\Tr(\bs I_d^2)+p^2(\Tr(\bs I_d))^2
=
2pd+p^2d^2$.

To compute $M_2$, let
\[\mc{ON} = 
\{ \bs E_{aa}:1\le a\le d\}
\cup
\left\{
\frac{\bs E_{ab}+\bs E_{ba}}{\sqrt2}:1\le a<b\le d
\right\}
\]
be the standard Frobenius-orthonormal basis of $\Sym(d)$. Since $
\Tr(\bs X^2)=\|\bs X\|_{\mathrm F}^2
=$
$\sum_{\bs H \in \mc {ON}}$ $
(\Tr(\bs H\bs X))^2$,
where the sum is over this orthonormal basis,
and thus we have
{\color{black}\[
\begin{aligned}
M_2
&= \EE_{\bs X\sim\rho}[\Tr(\bs X^2)]\\
&= \EE_{\bs X\sim\rho}\left[
\sum_{a=1}^d (\Tr(\bs E_{aa}\bs X))^2
+
\sum_{1\le a<b\le d}
\left(
\Tr\left(\frac{\bs E_{ab}+\bs E_{ba}}{\sqrt2}\bs X\right)
\right)^2
\right]\\
&=
\sum_{a=1}^d
\left(
p^2(\Tr(\bs E_{aa}))^2
+
2p\Tr(\bs E_{aa}^2)
\right)\\
&\quad+
\sum_{1\le a<b\le d}
\left(
p^2\left(\Tr\left(\frac{\bs E_{ab}+\bs E_{ba}}{\sqrt2}\right)\right)^2
+
2p\Tr\left(
\left(\frac{\bs E_{ab}+\bs E_{ba}}{\sqrt2}\right)^2
\right)
\right)\\
&=
d(p^2+2p)
+
\frac{d(d-1)}{2}\,2p
=
p^2d+pd(d+1).
\end{aligned}
\]}
where the third equality follows by applying \eqref{eq:second-moment-tensor} to each element of the Frobenius orthonormal basis above.

For $d\ge2$, Lemma~\ref{lem:second-moment-tensor} gives $
b_\rho
=
\frac{M_2-\frac1d M_1}{d^2+d-2}$.
Substituting the values of $M_1$ and $M_2$,
\[
b_\rho
=
\frac{
p^2d+pd(d+1)
-
\frac1d(p^2d^2+2pd)
}{
d^2+d-2
}
=
\frac{p(d^2+d-2)}{d^2+d-2}
=
p.
\]
Then $
a_\rho
=
\frac{M_1-2b_\rho d}{d^2}
=
\frac{p^2d^2+2pd-2pd}{d^2}
=
p^2$.
For $d=1$, we can only identify the combination $
a_\rho+2b_\rho
=
\EE_\rho[X^2]
=
p^2+2p
=
p(p+2)$,
as claimed.
\end{proof}
\begin{corollary}[Optimal transport between Wishart distributions]
\label{cor:wishart-ot}
Let $\mu_i=\mc W_d(\bs\Sigma_i,p)$ with
$\bs\Sigma_i\in\mathbb S_\succ^d$, $p > d-1$, $i=0,1$,
and let
$c(\bs X,\bs Y)=\|\bs X-\bs Y\|_{\mathrm F}^2$.
Set $\bs C=(\bs\Sigma_0^\hf\bs\Sigma_1\bs\Sigma_0^\hf)^{\hf}$.
Then:
\begin{enumerate}[label=\textnormal{(\alph*)}]
    \item the unique optimal Monge map from $\mu_0$ to $\mu_1$ is $T\opt$, where $T\opt$ is as defined in \eqref{eq:congruence-map},
    \item $(\mathrm{id}_{\mathbb S_{\succ}}, T\opt)_{\#}\mu_0$ is the unique optimal Kantorovich plan,
    \item $  \mathds K_c(\mu_0,\mu_1)
  =p(p+1)\|\bs\Sigma_0-\bs\Sigma_1\|_{\mathrm F}^2
  +p\left((\Tr(\bs\Sigma_0))^2+(\Tr(\bs\Sigma_1))^2
  -2(\Tr(\bs C))^2\right),$
    \item $\mathds M_c(\mu_0, \mu_1) = \mathds K_c(\mu_0, \mu_1)$. 
\end{enumerate}
\end{corollary}

\begin{proof}
By Lemma~\ref{lem:wishart}, $\rho = \mathcal{W}_d(\bs I_d, p)$ satisfies the hypotheses of \Cref{thm:congruence-template} for $p > d-1$. Substituting the corresponding values of $a_\rho$ and $b_\rho$ from \Cref{lem:wishart} into \eqref{eq:congruence-value} yields the stated value of $\mathds K_c$. The remaining claims follow directly from \Cref{thm:congruence-template}.
\end{proof}
}

\newpage

\bibliographystyle{plainnat}
\bibliography{arxiv-after-revision}

\newpage
\begin{center}
  \vspace*{2\baselineskip} 
  {\Huge \bfseries Appendix} 
  \vspace{2\baselineskip}   
\end{center}

\addtocontents{toc}{\protect\setcounter{tocdepth}{3}}

\noindent 
This document contains supplementary material for the paper
\emph{Optimal Transport on Lie Group Orbits}.

\appendix

\section{Beyond Lie group orbits: one-dimensional distributions}
\label{app:1d-infinite-dimensional-case}
{\color{black}
The preceding examples fall within the finite-dimensional
Lie group framework developed in the main paper.  We now record the
classical one-dimensional case as a complementary example.
This example still has an orbit and stabilizer structure,
but the acting transformation group is infinite-dimensional
and is not a finite-dimensional Lie group.  The results in this section should
therefore be read as an orbit-based reformulation of the
classical monotone rearrangement formula
\citep{ref:cuestaalbertos1993optimal}, rather than as an
application of the finite-dimensional theory of the main paper.}

Fix an absolutely continuous probability measure
$\mu\ll \mathcal L^{1}$ on~$\R$ and let $F_\mu$ and $r_\mu$ denote its cumulative
distribution function, and probability density function, respectively. 
Throughout we consider the class  
\[
   \mathcal P^{+}
   =\left\{\mu\in\mathcal P(\R): \mu \ll \mc L^1,~F_\mu\in\mathcal C^1,~
     F_\mu'(x) > 0 ~\forall x \in \R \right\}
\]
Fix a reference probability measure $\rho$ with a {smooth,
strictly positive} density
$r\in \mc C^{\infty}(\R)$; a concrete example is the logistic density $
   r(x)=\tfrac14 \mathrm{sech}^{2} \bigl(x/2\bigr)$,
and its cumulative distribution function $
   F_\rho(x)=\int_{-\infty}^{x} r(s) \diff s$
satisfies $F_\rho' (x)=r(x)>0$ for all $x$, so
$F_\rho:\R\to(0,1)$ is a $\mc C^{\infty}$, strictly increasing bijection.
By the inverse-function theorem,
$F_\rho$ is in fact a $\mc C^{\infty}$ {diffeomorphism}
with smooth inverse $F_\rho^{-1} :(0,1)\to\R$.

Let $\mc G^+$ be the group of orientation-preserving $\mc C^1$
diffeomorphisms of $\R$: $
  \mc G^+
  =\{
  g\in\mc C^1(\R;\R):
  g'(x)>0~\forall x\in\R,
  ~g \text{ is bijective},
 ~ g^{-1}\in\mc C^1(\R;\R)\}$,
equipped with composition.

{\color{black}We regard $\mc G^+$ only as a transformation group. In particular, this
one-dimensional example is not an application of the finite-dimensional
Lie-group framework developed in the main paper. Nevertheless, it has the same
orbit-stabilizer structure at the level of transformations. Namely, if $
\mu_i=(g_i)_\#\rho$, $g_i\in\mc G^+$, $i=0,1$,
then every map of the form $
  g_1\circ h\circ g_0^{-1}$, $h\in\Stab_{\mc G^+}(\rho)$,
transports $\mu_0$ to $\mu_1$. Thus we introduce the
one-dimensional analogue of the orbit-reduced problem \eqref{eq:transport-wihtin-orbits}. For this
subsection only, we define
\begin{equation}
\label{eq:orbit-1d}
  \mathds J^{1}_c(g_0,g_1)
  =
  \inf_{h\in\Stab_{\mc G^+}(\rho)}
  \int_\R c\bigl(g_0(x),(g_1\circ h)(x)\bigr)\,\diff\rho(x),
\end{equation}
where $
  \Stab_{\mc G^+}(\rho)
  =
  \{h\in\mc G^+:h_\#\rho=\rho\}$.}

\begin{lemma}\label{lem:diff-orbit-1d}
   For any $\mu \in \mathcal P^+$, $(F_{\mu}^{-1} \circ F_{\rho})_\#\rho = \mu$ and $\mc G^+_{\#}\rho=\mathcal P^{+}.$
\end{lemma}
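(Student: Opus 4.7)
The plan is to establish the two claims separately, with the first one providing the key ingredient for the reverse inclusion in the second.

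First I would prove that $g:= F_\mu^{-1}\circ F_\rho$ belongs to $\Diff^{+}(\R)$ and satisfies $g_{\#}\rho=\mu$. By assumption $F_\rho\in\mathcal C^{\infty}$ and $F_\rho'=r>0$, so $F_\rho:\R\to(0,1)$ is a strictly increasing $\mathcal C^{\infty}$ bijection; similarly $F_\mu\in\mathcal C^{1}$ with $F_\mu'=r_\mu>0$, so by the inverse function theorem $F_\mu^{-1}:(0,1)\to\R$ is a strictly increasing $\mathcal C^{1}$ bijection. Their composition $g$ is therefore a strictly increasing $\mathcal C^{1}$ bijection of $\R$, i.e., $g\in\Diff^{+}(\R)$. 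To verify the push-forward identity, I would compute the CDF of $g_{\#}\rho$: for any $t\in\R$, since $g$ is orientation-preserving,
\[
(g_{\#}\rho)((-\infty,t])=\rho\bigl(g^{-1}((-\infty,t])\bigr)=\rho\bigl((-\infty,g^{-1}(t)]\bigr)=F_\rho(g^{-1}(t))=F_\rho(F_\rho^{-1}(F_\mu(t)))=F_\mu(t).
\]
Since probability measures on $\R$ are determined by their CDFs, $g_{\#}\rho=\mu$.

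Next I would establish $\Diff^{+}(\R)_{\#}\rho=\mathcal P^{+}$ by double inclusion. The inclusion $\supseteq$ follows immediately from the first part: every $\mu\in\mathcal P^{+}$ is $g_{\#}\rho$ for $g=F_\mu^{-1}\circ F_\rho\in\Diff^{+}(\R)$. For $\subseteq$, fix $g\in\Diff^{+}(\R)$ and set $\nu=g_{\#}\rho$. By \Cref{lem:ac}, $\nu\ll\mathcal L^{1}$ with density
\[
\frac{\mathrm d\nu}{\mathrm d\mathcal L^{1}}(y)=\frac{r(g^{-1}(y))}{g'(g^{-1}(y))},
\]
which is strictly positive since $r>0$ and $g'>0$. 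Its CDF, computed as in the first step, equals $F_\nu(y)=F_\rho(g^{-1}(y))$, which is $\mathcal C^{1}$ with derivative equal to the density above, hence strictly positive everywhere. Therefore $\nu\in\mathcal P^{+}$.

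The only subtlety, which I would treat carefully, is the regularity of $F_\mu^{-1}$: the hypothesis $\mu\in\mathcal P^{+}$ gives $F_\mu\in\mathcal C^{1}$ with $F_\mu'>0$ on all of $\R$, and the image $F_\mu(\R)=(0,1)$ because $F_\mu$ is a continuous strictly increasing CDF; the inverse function theorem then yields $F_\mu^{-1}\in\mathcal C^{1}$ on $(0,1)$ with $(F_\mu^{-1})'(u)=1/r_\mu(F_\mu^{-1}(u))>0$. This is the only step requiring more than bookkeeping; everything else is a direct application of the change-of-variables identity together with the fact that orientation-preserving $\mathcal C^{1}$ bijections of $\R$ preserve half-line preimages.
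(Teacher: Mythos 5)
Your proof is correct and follows essentially the same route as the paper: double inclusion, with the forward inclusion via the change-of-variables density $r(g^{-1}(y))/g'(g^{-1}(y))$ and the reverse inclusion via the explicit element $g=F_\mu^{-1}\circ F_\rho$. The only (harmless) difference is that you verify $g_{\#}\rho=\mu$ by matching CDFs on half-lines and invoking that CDFs determine Borel probability measures on $\R$, whereas the paper checks the identity on arbitrary Borel sets through the probability integral transform $(F_\rho)_{\#}\rho=\lambda$; both arguments are valid, and yours is slightly more streamlined.
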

\begin{proof}
First, we will show that $\mc G^+{_{\#}}\rho \subseteq \mathcal P^{+}$. 
Take $g\in\mc G^+$ and set $\mu=g_{\#}\rho$.
Since $g$ is $\mc C^{1}$, strictly increasing and surjective,
its inverse is likewise $\mathcal C^1$ and strictly increasing. Additionally, by the inverse function theorem $(g^{-1})'(t)=1/g'(g^{-1}(t))>0$.  A direct
computation shows that the cumulative distribution function of $\mu$ is $
    F_\mu(x)= \int_{-\infty}^x r(g^{-1}(t)) (g^{-1})'(t) \diff t.$
By applying the change of variables in the form of $s=g^{-1}(t)$ (hence
$\mathrm dt=g'(s) \mathrm ds$), we have
$F_\mu(x)=\int_{-\infty}^{g^{-1}(x)} r(s)\mathrm ds
            = F_{\rho} (g^{-1}(x)).$
Since \(F_\rho:\mathbb R\to(0,1)\) and \(g^{-1}:\mathbb R\to\mathbb R\)
are both \(\mc C^1\) and strictly increasing, their composition
\(F_\rho\circ g^{-1}\) is \(\mc C^1\) and strictly increasing on
\(\mathbb R\).
Moreover, 
\[
F_\mu'(x)=r\bigl(g^{-1}(x)\bigr)(g^{-1})'(x)
          =\frac{r(g^{-1}(x))}{g'(g^{-1}(x))}>0.
\]
Thus $F_\mu$ satisfies the defining properties of~$\mc P^{+}$, so
$\mu\in\mc P^{+}$.

Next, we will show that $\mc P^{+}\subseteq\mc G^+_{\#}\rho$.
Take an arbitrary $\mu\in\mc P^{+}$ and write $F_\mu$ for its
cumulative distribution function and $r_\mu$ for its density.  Because $F_\mu$ is strictly
increasing, the inverse
$F_\mu^{-1}:(0,1)\to\R$ is well defined and belongs to
$\mc C^1((0,1))$ and strictly increasing.
Define $
      g = F_\mu^{-1}\circ F_\rho :\R\to\R $,
 which also belongs to $\mc C^1(\R)$. 
Hence,
$g\in\mc G^+$.
For any $\mc A \in \mc B(\R)$,
\begin{align*}
   g_{\#}\rho(\mc A)
      &=\rho\left(g^{-1}(\mc A)\right)\\
      &=\rho\left( \left\{x\in\R:F_\mu^{-1}(F_\rho(x))\in \mc A\right\}\right)\\
      &=\rho\left( \left\{x\in \R :F_\rho(x)\in F_\mu(\mc A)\right\}\right)\\
      &= (F_\rho)_\#\rho(F_\mu(\mc A))=\lambda\left(F_\mu(\mc A)\right)=\mu(\mc A),
\end{align*}
where $\lambda$ is the Lebesgue measure on the unit interval $(0,1)$. The first equality follows by the definition of the push-forward operator, the second equality by construction of $g$, the third equality follows because $F_\mu^{-1}$ is the inverse of the strictly increasing function $F_\mu$. The fourth equality follows by the definition of the push-forward operator, the fifth equality follows because by the probability-integral transform $(F_\rho)_{\#}\rho = \lambda$. The last equality follows because 
 $(F_\mu)_{\#}\mu=\lambda$, which equivalently means that for every $\mc B'\in \mc B((0,1))$, we have
      $\lambda(\mc B')=\mu (F_\mu^{-1}(\mc B'))$.
      Taking $\mc B'=F_\mu(\mathcal A)$ gives
      $\lambda (F_\mu(\mathcal A))=\mu(\mathcal A)$.
Hence, we may conclude that $g_{\#}\rho = \mu$, which implies that $\mc P^+ \subseteq \mc G^+_\#\rho$. This observation completes our proof. 
\end{proof}

\begin{lemma}\label{lem:diff-stab-1d}
The stabilizer of $\rho$ under $\mc G^+$ is $\Stab_{\mc G^+}(\rho)=\{\mathrm{id}_{\R}\}.
$
\end{lemma}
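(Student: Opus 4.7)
The plan is to reduce the stabilizer condition to an identity between cumulative distribution functions and then invoke the strict monotonicity of $F_\rho$ to conclude. The reverse inclusion $\{\mathrm{id}_{\R}\}\subseteq\Stab_{\Diff^{+}(\R)}(\rho)$ is immediate since the identity is in $\Diff^+(\R)$ and $(\mathrm{id}_{\R})_{\#}\rho=\rho$ by \Cref{lem:left-action-on-measures}(i), so the work is in the forward inclusion.

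First, I take an arbitrary $g\in\Stab_{\Diff^{+}(\R)}(\rho)$, so $g_{\#}\rho=\rho$. Reusing the computation carried out in the proof of \Cref{lem:diff-orbit-1d}, the cumulative distribution function of $g_{\#}\rho$ is
\[
F_{g_{\#}\rho}(x)=F_{\rho}\bigl(g^{-1}(x)\bigr)\qquad\text{for every }x\in\R,
\]
obtained by the change of variables $s=g^{-1}(t)$ together with $g\in\Diff^{+}(\R)$. Since $g_{\#}\rho$ and $\rho$ agree as measures, they have the same cumulative distribution function, so $F_{\rho}(g^{-1}(x))=F_{\rho}(x)$ for every $x\in\R$.

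Next I exploit the defining property of the reference measure: $F_{\rho}'=r>0$ everywhere on $\R$, so $F_{\rho}$ is strictly increasing, hence injective. Applying $F_{\rho}^{-1}$ to both sides of the identity above yields $g^{-1}(x)=x$ for every $x\in\R$, and therefore $g=\mathrm{id}_{\R}$. Combining both inclusions gives $\Stab_{\Diff^{+}(\R)}(\rho)=\{\mathrm{id}_{\R}\}$.

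There is no real obstacle here; the proof is essentially a one-line consequence of the CDF formula already derived in \Cref{lem:diff-orbit-1d} together with the strict positivity of $r$, which is exactly what makes $F_{\rho}$ invertible. The only subtlety worth flagging is that both hypotheses on $r$ are used: smoothness placed $g$ and $F_\rho$ in $\Diff^{+}(\R)$ in the first place, while strict positivity is what rules out any nontrivial stabilizing element.
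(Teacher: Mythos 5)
Your proof is correct and takes essentially the same approach as the paper: both reduce the stabilizer condition to the identity $F_\rho(g^{-1}(x))=F_\rho(x)$ and conclude from the strict monotonicity of $F_\rho$ that $g=\mathrm{id}_\R$. The only cosmetic difference is that you import the CDF identity from the computation in \Cref{lem:diff-orbit-1d} whereas the paper rederives it directly from the push-forward definition applied to half-lines.
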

\begin{proof}
Fix $g\in\mc G^+$ with $g_{\#}\rho=\rho$.
For every $x\in\R$ we have
\[
   F_\rho(x)
      = 
     \rho\left((-\infty,x]\right)
      = 
     \rho \left(g^{-1}((-\infty,x])\right)
      = 
     \rho\left((-\infty,g^{-1}(x)]\right)
      = 
     F_\rho\left(g^{-1}(x)\right).
\]
Thus $
    F_\rho(x)=F_\rho(g^{-1}(x))$ for all $ x \in \R$. 
Since $r>0$, the function $F_\rho$ is {strictly increasing} on $\R$.
Hence $F_\rho$ is injective, and the identity above implies
$g^{-1}(x)=x$ for every $x\in\R$; equivalently, $g(x)=x$.  Therefore $g=\mathrm{id}_{\R}$, completing the proof.
\end{proof}

\begin{lemma}
Suppose that $g_i = F_{\mu_i}^{-1}\circ F_\rho$, $i= 0,1$. Then, \eqref{eq:orbit-1d} induced by the cost function $c(x,y) = (x-y)^2$ is solved by $h\opt= \mathrm{id}_{\R}$ and admits the following closed form expression
     \begin{equation}
        \mathds J^{1}_c(g_0, g_1) = \int_0^{1} (F_{\mu_1}^{-1}(t) - F_{\mu_0}^{-1}(t))^2 \diff t. 
    \end{equation}
    \label{lem:1d-orbits-closed-form}
\end{lemma}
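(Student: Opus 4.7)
The plan is short because, in this one-dimensional setting, the stabilizer subgroup collapses to a single element. By \Cref{lem:diff-stab-1d}, $\Stab_{\Diff^{+}(\R)}(\rho)=\{\mathrm{id}_{\R}\}$, so the feasible set of \eqref{eq:transport-wihtin-orbits} is a singleton. Consequently, $h\opt=\mathrm{id}_{\R}$ is automatically the unique minimizer and the infimum defining $\mathds J_c(g_0, g_1)$ is attained by direct evaluation.

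The remaining work is to evaluate the single integral. Substituting $g_i=F_{\mu_i}^{-1}\circ F_\rho$ and $h\opt = \mathrm{id}_{\R}$, the objective reduces to
\[
   \mathds J_c(g_0,g_1)=\int_{\R}\bigl(F_{\mu_0}^{-1}(F_\rho(x))-F_{\mu_1}^{-1}(F_\rho(x))\bigr)^{2}\diff\rho(x).
\]
The concluding step is the change of variable $t=F_\rho(x)$. The setup in \S\ref{sec:1d-infinite-dimensional-case} guarantees that $F_\rho:\R\to(0,1)$ is a $\mathcal C^{\infty}$ diffeomorphism, and by the probability integral transform, $(F_\rho)_{\#}\rho$ equals Lebesgue measure on $(0,1)$, a fact already used in the proof of \Cref{lem:diff-orbit-1d}. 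Pushing the integral forward to $(0,1)$ thereby yields the claimed expression $\int_0^1\bigl(F_{\mu_1}^{-1}(t)-F_{\mu_0}^{-1}(t)\bigr)^{2}\diff t$.

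There is no substantive obstacle: once \Cref{lem:diff-stab-1d} collapses the stabilizer and the probability integral transform is invoked, the result is essentially a one-line computation. The only point worth stating explicitly is that the change of variable is justified by the diffeomorphism property of $F_\rho$ from the setup; nothing further is required.
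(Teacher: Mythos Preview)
Your proposal is correct and follows essentially the same route as the paper: invoke \Cref{lem:diff-stab-1d} to reduce the stabilizer to $\{\mathrm{id}_{\R}\}$, substitute $g_i=F_{\mu_i}^{-1}\circ F_\rho$, and then apply the change of variable $t=F_\rho(x)$ (the paper writes this as $\diff t=r(x)\,\diff x$, you phrase it via the probability integral transform, but it is the same step).
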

\begin{proof}
   By \Cref{lem:diff-stab-1d}, the stabilizer group is a singleton, and thus $h\opt = \mathrm{id}_{\R}$ solves \eqref{eq:orbit-1d}. Next, we evaluate $\mathds J^1_c$:
   \begin{align*}
       \mathds J^1_c(g_0, g_1) 
       &= \int_{\R} (g_0( x) - g_1( x))^2 \diff \rho(x)\\
       &=\int_{\R} (F_{\mu_1}^{-1}(F_\rho(x)) - F_{\mu_0}^{-1}(F_\rho(x)))^2 r(x)\diff x\\
       &=\int_0^1 (F_{\mu_1}^{-1}(t) - F_{\mu_0}^{-1}(t))^2\diff t,
   \end{align*}
   where the last equality follows by the change-of-variables formula $t \gets F_\rho(x)$, and thus $\diff t = r(x)\diff x $. This observation completes our proof.
\end{proof}
\begin{proposition}
\label{prop:one-d-closed-form}
    Suppose that $\mu_0, \mu_1 \in \mc P^+(\R)$ have finite second moments and $c(x,y)= (x-y)^2$. Then, we have
    \begin{enumerate}[label=\textnormal{(\roman*)}]
        \item $\mathds M_c(\mu_0, \mu_1) = \mathds J^1_c(g_0,g_1)$, and $T\opt = F_{\mu_1}^{-1}\circ F_{\mu_0}$ solves \eqref{eq:monge-problem},
        \item $\mathds K_c(\mu_0, \mu_1) = \mathds J^1_c(g_0,g_1)$, and $(\mathrm{id}_{\mc X}, T\opt)_{\#}\mu_0$ solves
    \eqref{eq:kantorovich-problem},
    \end{enumerate}
    where $g_i = F_{\mu_i}^{-1} \circ F_\rho \in \mc G^+$ for $i=0,1$.
\end{proposition}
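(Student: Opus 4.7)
The plan is to follow the template already used for the Gaussian, Wishart, and product-exponential examples: lift to the orbit picture, read off the optimal stabilizer element, compute the resulting candidate map $T\opt$, and exhibit a $c$-convex potential $\varphi$ satisfying \eqref{eq:T-optimal-c-convex-equation} so that both assertions fall out of \Cref{thm:closed-form} (iii) in one stroke. First I would invoke \Cref{lem:diff-orbit-1d} to write $\mu_i = g_{i\#}\rho$ with $g_i = F_{\mu_i}^{-1}\circ F_\rho \in \Diff^+(\R)$, and \Cref{lem:diff-stab-1d} to record that $\Stab_{\Diff^+(\R)}(\rho) = \{\mathrm{id}_\R\}$; hence $h\opt = \mathrm{id}_\R$ is trivially the unique optimum of \eqref{eq:transport-wihtin-orbits}, already packaged in \Cref{lem:1d-orbits-closed-form}. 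Substituting into $T\opt = g_1\circ h\opt\circ g_0^{-1}$ and using $g_0^{-1} = F_\rho^{-1}\circ F_{\mu_0}$, the reference cumulative distribution function cancels to give $T\opt = F_{\mu_1}^{-1}\circ F_{\mu_0}$, the classical monotone rearrangement.

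The core step is producing the algebraic certificate. Since $\mu_0,\mu_1 \in \mc P^+$, both $F_{\mu_0}$ and $F_{\mu_1}^{-1}$ are strictly increasing $\mc C^1$ bijections, so $T\opt$ is strictly increasing and $\mc C^1$ on $\R$. I would then set $u(x) = \int_0^x T\opt(s) \diff s$, which is well defined, $\mc C^1$, and convex (its derivative is the increasing map $T\opt$), and define $\varphi(x) = 2u(x) - x^2$. A short computation gives
\[
\varphi'(x) + \nabla_x c(x,T\opt(x)) = 2(T\opt(x)-x) + 2(x-T\opt(x)) = 0,
\]
verifying \eqref{eq:T-optimal-c-convex-equation} pointwise on $\R$. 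Because $\varphi(x) + x^2 = 2u(x)$ is convex, the remark following \Cref{def:c-convex} yields that $\varphi$ is $c$-convex for the quadratic cost.

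It then remains to discharge the hypotheses of \Cref{thm:closed-form} (iii) exactly as in the Gaussian and Wishart cases: Assumption \ref{ass:c-regularity} holds for $c(x,y)=(x-y)^2$, and the $c$-convex differentiability condition is automatic since $\mu_0 \ll \mc L^1$, by \cite[Example 10.36]{ref:villani2008optimal}. Assertions (i) and (ii) then follow directly from \Cref{thm:closed-form} (iii)-(a) and (iii)-(b). I do not expect a substantive obstacle here: the stabilizer is trivial, so unlike the Gaussian and Wishart settings there is no auxiliary optimization over $\mc O(d)$ to execute, and the construction of $\varphi$ amounts to integrating the one-dimensional Brenier map. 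The only point that warrants care is the finiteness of $\mathds J_c(g_0,g_1)$ and the $L^2$-integrability of $T\opt$ against $\mu_0$, both of which are guaranteed by the standing hypothesis $\mu_0,\mu_1 \in \mc P_c(\R)$.
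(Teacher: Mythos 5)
Your proposal is correct and follows essentially the same route as the paper: trivial stabilizer from \Cref{lem:diff-stab-1d}, $T\opt = F_{\mu_1}^{-1}\circ F_{\mu_0}$ via \Cref{lem:1d-orbits-closed-form}, a potential obtained by integrating $T\opt$ (the paper writes $\varphi(x)=2\int_{x_0}^x T\opt(s)\,\diff s - x^2$, identical to yours up to the base point), and then \Cref{thm:closed-form}~(iii). The only cosmetic difference is that you certify $c$-convexity via convexity of $\varphi(x)+x^2$ (the remark after \Cref{def:c-convex}) while the paper checks $\varphi''\geq -2$ and cites \cite[Example 13.6]{ref:villani2008optimal}; these are the same fact.
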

\begin{proof}
\label{ref:prop-closed-form-1d}
By \Cref{lem:diff-orbit-1d}, we have $(g_i)_\#\rho = \mu_i$ for $i=0,1$; hence $\mu_0, \mu_1 \in \mc G^+_\#\rho$. Define $T\opt = g_1 \circ h\opt \circ g_0^{-1}$, where $h\opt$ is the map defined in \Cref{lem:1d-orbits-closed-form}. Consequently, we have $T\opt(x) = F_{\mu_1}^{-1}(F_{\mu_0}(x))$.

In what follows, we define $\varphi(x) = 2\int_{x_0}^x T\opt(s) \diff s- x^2$ for some $x_0 \in \R$, and we compute its first and second order derivatives  
\[\varphi'(x) = 2 T\opt(x) - 2x~\text{and}~\varphi''(x) = 2(T\opt)'(x) - 2.\]
As $T\opt$ is increasing, $(T\opt)'(x) \geq 0 $ for all $x\in \R$, implying $\varphi''(x) \geq -2$. Consequently, by \cite[Example 13.6]{ref:villani2008optimal}, $\varphi$ is $c$-convex. Next, we evaluate \eqref{eq:T-optimal-c-convex-equation}:
\[2T\opt(x) - 2x + 2(x- T\opt(x) ) = 0.\]
By \Cref{lem:quadratic-c-convex}, the quadratic cost satisfies all hypotheses of \Cref{thm:monge-kantorovich}, {\color{black} and the finite-second-moment assumption gives $\mc K_c(\mu_0, \mu_1) < \infty$}. Therefore, by \Cref{thm:monge-kantorovich}(v) $T\opt$ solves \eqref{eq:monge-problem} and the optimal Kantorovich plan is concentrated on the graph of
\(T^\star\).

If \(X\sim\mu_0\), then \(U=F_{\mu_0}(X)\sim\mc U([0,1])\). Hence
\[
\begin{aligned}
  \int_\R |x-T^\star(x)|^2\,\diff\mu_0(x)
  &=
  \int_0^1
  \left(
  F_{\mu_0}^{-1}(t)-F_{\mu_1}^{-1}(t)
  \right)^2\,\diff t.
\end{aligned}
\]
The equality with \(\mathds J^{1}_c(g_0,g_1)\) follows from
Lemma~\ref{lem:1d-orbits-closed-form}.
\end{proof}
\Cref{prop:one-d-closed-form} coincides with \cite[Corollary 2.7]{ref:cuestaalbertos1993optimal} when restricted to the quadratic cost.

\section{Additional example for \Cref{sec:examples}}
\color{black}
\subsection{The diagonal scaling mechanism on $\mathbb R_{>0}^d$}
\label{subsec:diagonal-scaling}

Let the state space be $
  \mc X=\mathbb R_{>0}^d$,
and let $
  G_{\mathrm{diag}}=(\mathbb R_{>0}^d,\odot)$
be the componentwise multiplicative group. It acts on $\mc X$ by
coordinatewise scaling: $
  \alpha(\bs a)(\bs x)=\bs a\odot \bs x$, $\bs a\in\mathbb R_{>0}^d, \bs x\in\mathbb R_{>0}^d$.
Equivalently, writing $
  D(\bs a)=\mathrm{diag}(a_1,\ldots,a_d)$,
we have $
  \alpha(\bs a)(\bs x)=D(\bs a)\bs x$.
Thus the action is induced by a linear representation on
$\mc H=\mathbb R^d$ with the standard inner product, with $
  b\equiv 0,~
  \pi(\bs a)=D(\bs a)$.
Hence Assumption~\ref{ass:affine-induced-pi} holds.

\paragraph{Cartan structure}
The linear image is $
  L=\pi(G_{\mathrm{diag}})
  =
  \{D(\bs a):\bs a\in\mathbb R_{>0}^d\}$.
We equip $L$ with the involution $
  \theta(D(\bs a))=D(\bs a)^{-\top}=D(\bs a^{-1})$.
Then $
  L^\theta
  =
  \{D(\bs a)\in L:D(\bs a)=D(\bs a^{-1})\}
  =
  \{\bs I_d\}$.
The Lie algebra of $L$ is the space of diagonal matrices, $
  \mathfrak l
  =
  \{\mathrm{diag}(s_1,\ldots,s_d):s_i\in\mathbb R\}$.
Since $
  \diff\theta_{\bs I_d}(\bs S)=-\bs S$ for all $\bs S\in\mathfrak l$,
we have $
  \mathfrak k=\{0\}$ and $\mathfrak p=\mathfrak l$.
The exponential map restricts to a diffeomorphism $
  \mathfrak p\to L$,
  $\mathrm{diag}(s_1,\ldots,s_d)\mapsto
  \mathrm{diag}(e^{s_1},\ldots,e^{s_d})$.
Therefore $
  L^\theta\times\mathfrak p\to L$,~$(\bs I_d,\bs S)\mapsto \exp(\bs S)$,
is a diffeomorphism, so $(L,\theta)$ admits a global Cartan
decomposition in the sense of
Definition~\ref{def:linear-cartan-decomposition}. Moreover every
$\bs S\in\mathfrak p$ is diagonal, hence self-adjoint on $\mathbb R^d$.
Thus hypothesis~\ref{thm:hyp:cartan-self-adj} of
Theorem~\ref{thm:cartan-tightness} holds.

\paragraph{Reference distribution}
Let $
  \rho=\bigotimes_{i=1}^d \mathrm{Exp}(1)$,
with density 
\[
  r(\bs x)=\prod_{i=1}^d \exp({-x_i})\mathbbm 1_{\{x_i>0\}}.\]
The compact invariance condition of \Cref{thm:cartan-tightness} is automatic in this example, because $
  L^\theta=\{\bs I_d\}\subseteq \pi(\Stab_{G_{\mathrm{diag}}}(\rho))$.
Indeed, the identity element always belongs to the stabilizer.

\begin{lemma}[Product exponential orbit and stabilizer]
\label{lem:prod-exp-orbit-stabilizer}
Let $\rho=\bigotimes_{i=1}^d\mathrm{Exp}(1)$. Then:
\begin{enumerate}[label=\textnormal{(\roman*)}]
  \item for every $\bs\beta\in\mathbb R_{>0}^d$, $
    \alpha(\bs\beta^{-1})_\#\rho
    =
    \bigotimes_{i=1}^d\mathrm{Exp}(\beta_i)$,
  hence $
    G_{\mathrm{diag}\,\#}\rho
    =\{
      \bigotimes_{i=1}^d\mathrm{Exp}(\beta_i):
      \bs\beta\in\mathbb R_{>0}^d\}$,
  \item $
    \Stab_{G_{\mathrm{diag}}}(\rho)=\{\bs 1_d\}$.
\end{enumerate}
\end{lemma}

\begin{proof}
Let $\bs Z\sim\rho$ and set $
  \bs X=\bs a\odot \bs Z$.
For each coordinate, $
  X_i=a_i Z_i$.
Since $Z_i\sim\mathrm{Exp}(1)$, the density of $X_i$ is $
  x_i\mapsto a_i^{-1}\exp({-x_i/a_i})\mathbbm 1_{\{x_i>0\}}$.
Taking $\bs a=\bs\beta^{-1}$ gives $
  X_i\sim\mathrm{Exp}(\beta_i)$,
and the independence of the coordinates is preserved. This proves
\textnormal{(i)}.

For \textnormal{(ii)}, suppose $\bs a\in\Stab_{G_{\mathrm{diag}}}(\rho)$.
Then $\bs a\odot\bs Z\sim\bs Z$. Comparing the one-dimensional marginal
densities gives, for every coordinate $i$, $
  a_i^{-1}\exp({-x_i/a_i})=\exp({-x_i})$ for all $x_i>0$.
Taking logarithms and comparing the coefficient of $x_i$ gives
$a_i=1$. Hence $\bs a=\bs 1_d$.
\end{proof}
\begin{corollary}[Optimal transport between products of exponential laws]
\label{cor:prod-exp-ot}
Let $
  \mu_j=\bigotimes_{i=1}^d $ $\mathrm{Exp}(\beta_{j,i})$,
  $\bs\beta_j=(\beta_{j,1},\ldots,\beta_{j,d})\in\mathbb R_{>0}^d$, $j=0,1$.
For the quadratic cost $
  c(\bs x,\bs y)=\|\bs x-\bs y\|^2$,
the following hold:
\begin{enumerate}[label=\textnormal{(\alph*)}]
  \item the unique optimal Monge map from $\mu_0$ to $\mu_1$ is $
    T^\star(\bs x)
    =(
      \frac{\beta_{0,1}}{\beta_{1,1}}x_1,\ldots,
      \frac{\beta_{0,d}}{\beta_{1,d}}x_d)$,
  \item $(\mathrm{id}_{\mathbb R_{>0}^d},T^\star)_\#\mu_0$ is the
  unique optimal Kantorovich plan;
  \item the optimal transport value is $
    \mathds K_c(\mu_0,\mu_1)
    =
    2\sum_{i=1}^d
    (
      \beta_{0,i}^{-1}-\beta_{1,i}^{-1}
    )^2$,
  \item $
    \mathds M_c(\mu_0,\mu_1)
    =
    \mathds K_c(\mu_0,\mu_1)$.
\end{enumerate}
\end{corollary}

\begin{proof}
By Lemma~\ref{lem:prod-exp-orbit-stabilizer}, for $j=0,1$ we may write $
  \mu_j=\alpha(g_j)_\#\rho$,
  $g_j=\bs\beta_j^{-1}\in G_{\mathrm{diag}}$.
The structural hypotheses of Theorem~\ref{thm:cartan-tightness} were
verified above. Since the stabilizer is trivial, the only possible
stabilizing element is $
  h=\bs 1_d$.
Therefore $
  T_h
  =
  \alpha(g_1hg_0^{-1})
  =
  \alpha(\bs\beta_1^{-1}\odot\bs\beta_0)$,
which gives $
  T_h(\bs x)
  =
  (
    \frac{\beta_{0,1}}{\beta_{1,1}}x_1,\ldots,
    \frac{\beta_{0,d}}{\beta_{1,d}}x_d
  )$.
The linear part is $
  \mathrm{diag}(
    \frac{\beta_{0,1}}{\beta_{1,1}},\ldots,
    \frac{\beta_{0,d}}{\beta_{1,d}}
  )$,
which is self-adjoint and positive definite. Hence
Theorem~\ref{thm:quadratic-group-tightness} applies and proves
\textnormal{(a)}, \textnormal{(b)}, and \textnormal{(d)}.

It remains only to compute the value. Since
$h=\bs 1_d$ is the unique stabilizer element,
\[
\begin{aligned}
  \mathds J_c(g_0,g_1)
  &=
  \int_{\mathbb R_{>0}^d}
  \|g_0\odot\bs z-g_1\odot\bs z\|^2\,\diff\rho(\bs z)=
  \sum_{i=1}^d
  (\beta_{0,i}^{-1}-\beta_{1,i}^{-1})^2
  \mathbb E_{\rho}[Z_i^2].
\end{aligned}
\]
Since $Z_i\sim\mathrm{Exp}(1)$, $
  \mathbb E[Z_i^2]=2$.
Thus $
  \mathds J_c(g_0,g_1)
  =
  2\sum_{i=1}^d
  (\beta_{0,i}^{-1}-\beta_{1,i}^{-1})^2$.
The equality $
  \mathds K_c(\mu_0,\mu_1)=\mathds J_c(g_0,g_1)$
follows from Theorem~\ref{thm:quadratic-group-tightness}, proving
\textnormal{(c)}.
\end{proof}
\begin{remark}
In this example the Cartan decomposition is degenerate: the fixed-point
subgroup \(L^\theta\) is trivial. Thus there is no nontrivial stabilizer
optimization or rotational component to absorb. The example nevertheless
fits the same structural template as the affine and congruence
mechanisms: the orbit map has a self-adjoint positive definite linear
part, and this algebraic fact certifies optimality.
\end{remark}

Many other product-form scale families admit the same analysis, provided
the shape parameters are fixed and the relevant second moments are
finite. In such cases, the orbit is generated by coordinate-wise positive
scalings, the stabilizer is trivial, and the optimal map is again
diagonal. The closed-form value reduces to a sum of one-dimensional
second-moment calculations, exactly as in
\Cref{cor:prod-exp-ot}. Examples include products of Weibull,
Rayleigh, Gamma, inverse-Gamma, Pareto, lognormal, and generalized-Gamma
laws, under the corresponding parameter restrictions ensuring finite
quadratic cost.

More generally, whenever both the cost and the measures factorize across
coordinates, the Kantorovich problem decouples into one-dimensional
problems: a product of one-dimensional optimal plans is optimal, and the
Kantorovich value is the sum of the one-dimensional values. The
one-dimensional case is treated in \Cref{app:1d-infinite-dimensional-case}.

\section{Proofs and auxiliary results for \Cref{subsec:congruence}}
\label{app:cong-examples-additional}
\color{black}

This appendix collects the orbit, stabilizer, and
second-moment computations for the distributional families
summarized in Table~\ref{tab:congruence-families}.
Each subsection verifies the hypotheses of
Theorem~\ref{thm:congruence-template} and derives the
constants $a_\rho$ and $b_\rho$ that enter the optimal transport
cost formula~\eqref{eq:congruence-value}.

\subsubsection*{Inverse-Wishart distributions}
The inverse-Wishart distribution
$\mc{IW}_d(\bs\Psi,p)$ with scale matrix
$\bs\Psi\in\mathbb S_\succ^d$ and degrees of freedom $p > d-1$ is a probability law on $\mathbb S_{\succ}^d$ with density 
\begin{equation}\label{eq:iw-density}
 \bs X\mapsto
  \frac{
    \det(\bs\Psi)^{p/2}
  }{
    2^{pd/2}\mathsf\Gamma_d(p/2)
  }
  \det(\bs X)^{-(p+d+1)/2}
  \exp\left(-\frac12\Tr(\bs\Psi\bs X^{-1})\right),
  \qquad \bs X\in\mathbb S_\succ^d.
\end{equation}
Its mean exists for $p>d+1$, and the second moments of its matrix
entries exist for $p>d+3$.

\begin{lemma}[Inverse-Wishart orbit, stabilizer, and second moments]
\label{lem:iw}
Let $
\rho=\mc{IW}_d(\bs I_d,p)$ with $p>d+3$,
and set $m = p-d$.
Then:
\begin{enumerate}[label=\textnormal{(\roman*)}]
  \item For every $\bs A\in\GL(d)$, $
  \alpha(\bs A)_\#\rho
  =
  \mc{IW}_d(\bs A\bs A^\top,p)$.
  \item $
  \Stab_{\GL(d)}(\rho)=\mc O(d)$.
  \item For $d\ge2$, the constants in
  Lemma~\ref{lem:second-moment-tensor} are $
 a_\rho
  =
  ({m-2})/({m(m-1)(m-3)})$ and $
  b_\rho
  =({m(m-1)(m-3)})^{-1}$.
  If $d=1$, only the combination
  $a_\rho+2b_\rho$ is identifiable, and in this case $
 a_\rho+2b_\rho
  =({(p-2)(p-4)})^{-1}$.
\end{enumerate}
\end{lemma}

\begin{proof}
Parts \textnormal{(i)} and \textnormal{(ii)} are proved by the same arguments as in the proof of
Lemma~\ref{lem:wishart}. We therefore omit the proofs of those assertions.

\noindent (iii)
The condition $p>d+3$ is equivalent to $m=p-d>3$, which is the condition
under which the second moments of the inverse-Wishart entries are finite.
Then, we have
$\EE_{\bs X \sim\rho}[\bs X]=({m-1})^{-1}\bs I_d$,
so $
\EE_{\bs X \sim \rho}[X_{ab}]
=
\delta_{ab}/({m-1})$.
Moreover, the entrywise covariance is
\[
\mathrm{Cov}(X_{ab},X_{cd})
=
\frac{
2\delta_{ab}\delta_{cd}
+
(m-1)(\delta_{ac}\delta_{bd}+\delta_{ad}\delta_{bc})
}{
m(m-1)^2(m-3)
};
\]
see, for example, \cite[\S 3.8]{Mardia1979}.
Using $
\EE_{\bs X \sim \rho}[X_{ab}X_{cd}]
=
\EE_{\bs X \sim \rho}[X_{ab}]\EE_{\bs X \sim \rho}[X_{cd}]
+
\mathrm{Cov}(X_{ab},X_{cd})$,
we get
\begin{align*}
\EE_{\bs X\sim\rho}[X_{ab}X_{cd}]
&=
\frac{\delta_{ab}\delta_{cd}}{(m-1)^2}
+
\frac{
2\delta_{ab}\delta_{cd}
+
(m-1)(\delta_{ac}\delta_{bd}+\delta_{ad}\delta_{bc})
}{
m(m-1)^2(m-3)
} \\
&=
\left(
\frac{1}{(m-1)^2}
+
\frac{2}{m(m-1)^2(m-3)}
\right)\delta_{ab}\delta_{cd} \\
&\qquad
+
\frac{1}{m(m-1)(m-3)}
(\delta_{ac}\delta_{bd}+\delta_{ad}\delta_{bc}).
\end{align*}
A routine calculation shows that the coefficient of $\delta_{ab}\delta_{cd}$ simplifies as $
\frac{m-2}{m(m-1)(m-3)}$.
Therefore
\[
\EE[X_{ab}X_{cd}]
=
\frac{m-2}{m(m-1)(m-3)}
\delta_{ab}\delta_{cd}
+
\frac{1}{m(m-1)(m-3)}
(\delta_{ac}\delta_{bd}+\delta_{ad}\delta_{bc}).
\]
Comparing this with the tensor form in
Lemma~\ref{lem:second-moment-tensor} yields, for $d\ge2$,
\[
a_\rho
=
\frac{m-2}{m(m-1)(m-3)},
\qquad
b_\rho
=
\frac{1}{m(m-1)(m-3)}.
\]

For $d=1$, the coefficients $a_\rho$ and $b_\rho$ are not
separately identifiable. Only the combination $
a_\rho+2b_\rho$
is meaningful. Substituting $d=1$, so that $m=p-1$, gives
\[
a_\rho+2b_\rho
=
\frac{m-2+2}{m(m-1)(m-3)}
=
\frac{1}{(m-1)(m-3)}
=
\frac{1}{(p-2)(p-4)}.
\]
This completes the proof.
\end{proof}
\begin{corollary}[Optimal transport between inverse-Wishart distributions]
\label{cor:iw-ot}
For $i=0,1$, let $\mu_i=\mc{IW}_d(\bs\Sigma_i, p)$ with
$\bs\Sigma_i\in\mathbb S_\succ^d$,
$p >d+3$,
and let $c(\bs X,\bs Y)=\|\bs X-\bs Y\|_{\mathrm F}^2$.
Set $m = p -d$ and
$\bs C=(\bs\Sigma_0^\hf\bs\Sigma_1\bs\Sigma_0^\hf)^{\hf}$.
Then, 
\begin{enumerate}[label=\textnormal{(\alph*)}]
    \item the unique optimal Monge map from $\mu_0$ to $\mu_1$ is $T\opt$, where $T\opt$ is as defined in \eqref{eq:congruence-map},
    \item $(\mathrm{id}_{\mathbb S_\succ^d}, T\opt)_\#\mu_0$ is the unique optimal Kantorovich plan,
    \item $\mathds K_c(\mu_0, \mu_1) = ({m(m-3)})^{-1}
   \|\bs\Sigma_0-\bs\Sigma_1\|_{\mathrm F}^2
  +\frac{1}{m(m-1)(m-3)}\bigl(
   (\Tr\bs\Sigma_0)^2+(\Tr\bs\Sigma_1)^2
   -2(\Tr\bs C)^2\bigr),$
   \item $\mathds M_c(\mu_0, \mu_1) = \mathds K_c(\mu_0, \mu_1)$.
\end{enumerate}
\end{corollary}
\begin{proof}
By Lemma~\ref{lem:iw}, $\rho = \mathcal{IW}_d(\bs I_d, p)$ satisfies the hypotheses of \Cref{thm:congruence-template} for $p > d+3$. Substituting the corresponding values of $a_\rho$ and $b_\rho$ from \Cref{lem:iw} into \eqref{eq:congruence-value} yields the stated value of $\mathds K_c$. The remaining claims follow directly from \Cref{thm:congruence-template}.
\end{proof}

\subsubsection*{Matrix beta type~II distributions}
The matrix beta type~II distribution
$\mc{MB}_d^{\mathrm{II}}(q_1,$ $q_2,\bs\Sigma)$ with scale matrix $\bs \Sigma \in \mathbb S_\succ^d$ and shape parameters
$q_1>(d-1)/2$ and $q_2>(d-1)/2$ is a probability law on $\mathbb S_\succ^d$ with density
\begin{equation}\label{eq:mbeta-density}
  \bs X \mapsto \frac{\det(\bs \Sigma)^{q_2}}{\mathsf B_d(q_1, q_2)}\det(\bs X)^{q_1-(d+1)/2}
  \det(\bs\Sigma+\bs X)^{-(q_1+q_2)}.
\end{equation} 
When $q_1>(d-1)/2$ and $2q_2>d+3$, the entry-wise second moments are finite; see \cite[Theorem 5.3.20]{ref:gupta2018matrix}.
\begin{lemma}[Matrix beta~II orbit, stabilizer, and second moments]
\label{lem:mbeta}
Let
$\rho=\mc{MB}_d^{\mathrm{II}}(q_1,q_2,\bs I_d)$
with $q_1>(d-1)/2$ and $2q_2>d+3$,
and set $q=2q_2-d$.
Then:
\begin{enumerate}[label=\textnormal{(\roman*)}]
  \item $\alpha(\bs A)_\#\rho
        =\mc{MB}_d^{\mathrm{II}}(q_1,q_2,\bs A\bs A^\top)$
  for every $\bs A\in\mathrm{GL}(d)$;
  \item $\Stab_{\mathrm{GL}(d)}(\rho)=\mc O(d)$;
  \item for $d \geq 2$, the constants in \Cref{lem:second-moment-tensor} are 
  \[\displaystyle
        a_\rho
        =\frac{4q_1\bigl(q_1(q-2)+1\bigr)}{q(q-1)(q-3)}~\text{and}~ b_\rho
        =\frac{2q_1(2q_1+q-1)}{q(q-1)(q-3)}.\]
\end{enumerate}
\end{lemma}

\begin{proof}
(i) follows by \cite[Theorem 5.2.2]{ref:gupta2018matrix}.

(ii)
If $\alpha(\bs A)_\#\rho=\rho$, then by part~(i),
 $
\mc{MB}_d^{\mathrm{II}}(q_1, q_2,\bs A\bs A^\top)
=
\mc{MB}_d^{\mathrm{II}}(q_1,q_2,\bs I_d)$.
Since the scale parameter is uniquely determined by the density
in~\eqref{eq:mbeta-density}, it follows that
$\bs A\bs A^\top=\bs I_d$.

(iii)
By \cite[Theorem 5.2.5]{ref:gupta2018matrix}, 
if $\bs W_1\sim\mc W_d(\bs I_d,2q_1)$ and
$\bs W_2\sim\mc W_d(\bs I_d,2q_2)$ are independent, then $\bs X=\bs W_2^{-\hf}\bs W_1\bs W_2^{-\hf}$ satisfies $\bs X \sim \mathcal{MB}_d^{\mathrm{II}}(q_1, q_2, \bs I_d)$.

To compute the second moments of \(\bs X\), we condition on \(\bs W_2\).
Given \(\bs W_2\), the matrix \(\bs W_2^{-1/2}\) is deterministic and
\(\bs W_1\) remains distributed as \(\mc W_d(\bs I_d,2q_1)\), because
\(\bs W_1\) and \(\bs W_2\) are independent. Therefore, by the
congruence transformation rule for the Wishart distribution,
\[
  \bs X \mid \bs W_2
  =
  \bs W_2^{-1/2}\bs W_1\bs W_2^{-1/2}
  \sim
  \mc W_d(\bs W_2^{-1},2q_1).
\]
Here the scale matrix \(\bs W_2^{-1}\) is understood conditionally on
\(\bs W_2\).

Now we apply the Wishart second-moment formula to
$\bs X \mid \bs W_2$. If
$\bs Y\sim\mc W_d(\bs \Sigma,p)$, then by \Cref{lem:wishart}(iii)
\[
  \EE_{\bs Y \sim \mc W_d(\bs \Sigma, p)}[Y_{ab}Y_{cd}]
  =
  p^2\Sigma_{ab}\Sigma_{cd}
  +
  p\bigl(\Sigma_{ac}\Sigma_{bd}+\Sigma_{ad}\Sigma_{bc}\bigr).
\]
Taking here $p=2q_1$ and $\bs\Sigma=\bs W_2^{-1}$ yields
\begin{align}\label{eq:mbii-cond-second}
\nonumber
  &\EE_{\bs X \sim \mc{MB}_{d}^{\mathrm{II}}(q_1, q_2, \bs I_d)}[X_{ij}X_{kl}\mid\bs W_2]
  =\\
  &\hspace{0.2cm}(2q_1)^2(W_2^{-1})_{ij}(W_2^{-1})_{kl}
  +
  2q_1\bigl((W_2^{-1})_{ik}(W_2^{-1})_{jl}
  +(W_2^{-1})_{il}(W_2^{-1})_{jk}\bigr).
\end{align}

Next, since $\bs W_2^{-1}\sim \mc{IW}_d(\bs I_d,2q_2)$, we may use the
inverse-Wishart second-moment tensor from Lemma~\ref{lem:iw}. Writing
\[
  \EE_{\bs W_2^{-1} \sim \mc{IW}_d(\bs I_d, 2q_2)}[(W_2^{-1})_{ij}(W_2^{-1})_{k\ell}]
  =
  \alpha_{\mathrm{IW}}\delta_{ij}\delta_{k\ell}
  +
  \beta_{\mathrm{IW}}
  \bigl(\delta_{ik}\delta_{j\ell}+\delta_{i\ell}\delta_{jk}\bigr),
\]
with
\[
  \alpha_{\mathrm{IW}}=\frac{q-2}{q(q-1)(q-3)},
  \qquad
  \beta_{\mathrm{IW}}=\frac{1}{q(q-1)(q-3)},
  \qquad
  q=2q_2-d,
\]
we take expectations in \eqref{eq:mbii-cond-second}.

For the first term,
\[
  \EE_{\bs W_2^{-1} \sim \mc{IW}_d(\bs I_d, 2q_2)}[(W_2^{-1})_{ij}(W_2^{-1})_{kl}]
  =
  \alpha_{\mathrm{IW}}\delta_{ij}\delta_{kl}
  +
  \beta_{\mathrm{IW}}
  \bigl(\delta_{ik}\delta_{jl}+\delta_{il}\delta_{jk}\bigr).
\]

For the second term, apply the same formula twice:
\begin{align*}
  \EE_{\bs W_2^{-1} \sim \mc{IW}_d(\bs I_d, 2q_2)}[(W_2^{-1})_{ik}(W_2^{-1})_{jl}]
  &=
  \alpha_{\mathrm{IW}}\delta_{ik}\delta_{jl}
  +
  \beta_{\mathrm{IW}}
  \bigl(\delta_{ij}\delta_{kl}+\delta_{il}\delta_{jk}\bigr), \\
  \EE_{\bs W_2^{-1} \sim \mc{IW}_d(\bs I_d, 2q_2)}[(W_2^{-1})_{il}(W_2^{-1})_{jk}]
  &=
  \alpha_{\mathrm{IW}}\delta_{il}\delta_{jk}
  +
  \beta_{\mathrm{IW}}
  \bigl(\delta_{ij}\delta_{kl}+\delta_{ik}\delta_{jl}\bigr).
\end{align*}
Summing these two identities gives
\begin{align*}
  &\EE_{\bs W_2^{-1} \sim \mc{IW}_d(\bs I_d, 2 q_2)}[(W_2^{-1})_{ik}(W_2^{-1})_{jl}]
  +
  \EE[(W_2^{-1})_{il}(W_2^{-1})_{jk}]\\
  &\hspace{0.5cm}=
  2\beta_{\mathrm{IW}}\delta_{ij}\delta_{kl}
  +
  (\alpha_{\mathrm{IW}}+\beta_{\mathrm{IW}})
  \bigl(\delta_{ik}\delta_{jl}+\delta_{il}\delta_{jk}\bigr).
\end{align*}

Substituting back into \eqref{eq:mbii-cond-second}, we obtain $
  \EE[X_{ij}X_{kl}]
  =
  a_\rho\delta_{ij}\delta_{kl}
  +
  b_\rho(\delta_{ik}\delta_{jl}+\delta_{il}\delta_{jk})$,
where
\begin{align*}
  a_\rho
  &=
  (2q_1)^2\alpha_{\mathrm{IW}}
  +
  2q_1\cdot 2\beta_{\mathrm{IW}},\quad b_\rho=
  (2q_1)^2\beta_{\mathrm{IW}}
  +
  2q_1(\alpha_{\mathrm{IW}}+\beta_{\mathrm{IW}}).
\end{align*}
Finally, inserting the values of $\alpha_{\mathrm{IW}}$ and
$\beta_{\mathrm{IW}}$ yields
\begin{align*}
  a_\rho
  &=
  \frac{4q_1^2(q-2)+4q_1}{q(q-1)(q-3)}
  =
  \frac{4q_1\bigl(q_1(q-2)+1\bigr)}{q(q-1)(q-3)}, \\
  b_\rho
  &=
  \frac{4q_1^2+2q_1(q-1)}{q(q-1)(q-3)}
  =
  \frac{2q_1(2q_1+q-1)}{q(q-1)(q-3)}.
\end{align*}
\end{proof}

\begin{corollary}[Optimal transport between matrix beta~II distributions]
\label{cor:mbeta-ot}
For $i=0,1$ let
$\mu_i=\mc{MB}_d^{\mathrm{II}}(q_1,q_2,\bs\Sigma_i)$
with $\bs\Sigma_i\in\mathbb S_\succ^d$,
$q_1>(d-1)/2$, $2q_2>d+3$, $i=0,1$,
and let $c(\bs X,\bs Y)=\|\bs X-\bs Y\|_{\mathrm F}^2$.
Set $q=2q_2-d$ and
$\bs C=(\bs\Sigma_0^\hf\bs\Sigma_1\bs\Sigma_0^\hf)^{\hf}$.
Then, 
\begin{enumerate}[label=\textnormal{(\alph*)}]
    \item the unique optimal Monge map from $\mu_0$ to $\mu_1$ is $T\opt$, where $T\opt$ is as defined in \eqref{eq:congruence-map},
    \item $(\mathrm{id}_{\mathbb S_\succ^d}, T\opt)_\#\mu_0$ is the unique optimal Kantorovich plan,
    \item 
    \begin{align*}\mathds K_c(\mu_0,\mu_1)
  &=\frac{2q_1(2q_1q-2q_1+q+1)}{q(q-1)(q-3)}
   \|\bs\Sigma_0-\bs\Sigma_1\|_{\mathrm F}^2
  +\\&\hspace{0.3cm}\frac{2q_1(2q_1+q-1)}{q(q-1)(q-3)}
  \left((\Tr\bs\Sigma_0)^2+(\Tr\bs\Sigma_1)^2
  -2(\Tr\bs C)^2\right),
  \end{align*}
   \item $\mathds M_c(\mu_0, \mu_1) = \mathds K_c(\mu_0, \mu_1)$.
\end{enumerate}\end{corollary}

\begin{proof}
By Lemma~\ref{lem:mbeta}, $\rho = \mathcal{MB}_d^{\rm II}(q_1, q_2, \bs I_d)$ satisfies the hypotheses of \Cref{thm:congruence-template} for $q_1 > (d-1) / 2$ and $2q_2 > d+3$. Substituting the corresponding values of $a_\rho$ and $b_\rho$ from \Cref{lem:mbeta} into \eqref{eq:congruence-value} yields the stated value of $\mathds K_c$. The remaining claims follow directly from \Cref{thm:congruence-template}.
\end{proof}

\end{document}